\documentclass{amsart} 

\usepackage{amsmath,amssymb,mathrsfs,amsthm,delarray}
\usepackage{pictex}
\usepackage[all,cmtip]{xy}
\numberwithin{equation}{section}

\newtheorem{Theorem}{Theorem}[section]

\newtheorem*{theorem3'}{Theorem 2'}
\newtheorem*{Theorem A}{Theorem A}
\newtheorem*{Theorem A'}{Theorem A'}
\newtheorem*{Theorem B}{Theorem B}
\newtheorem*{Theorem C}{Theorem C}
\newtheorem{Proposition}{Proposition}[section]
\newtheorem{Lemma}{Lemma}[section]
\newtheorem{definition}{Definition}[section]
\newtheorem{Coro}{Corollary}[section]

\newtheorem{Remark}{Remark}[section]

\newcommand{\C}{\mathbb C}

\newcommand{\R}{\mathbb{R}}
\newcommand{\Z}{\mathbb{Z}}

\usepackage{geometry} 
\geometry{a4paper} 

\usepackage{graphicx} 

\title{Primitive tuning for  non-hyperbolic polynomials}
\author{Yimin Wang}
\date{\today}
\address{Shanghai Center for Mathematical Sciences,
Fudan University, No. 2005 Songhu Road, Shanghai 200438, China}
\email{yiminwang16@fudan.edu.cn}
\begin{document}
\maketitle

\begin{abstract}  Let $f_0$ be a polynomial of degree $d_1+d_2$ $(d_1\ge 2, d_2\ge 1)$with a periodic critical point $0$ of multiplicity $d_1-1$ and a Julia critical point of multiplicity $d_2$. We show that if $f_0$ is primitive, free of neutral periodic points and non-renormalizable at the Julia critical point,   then the straightening map $\chi_{f_0}:\mathcal C(\lambda_{f_0}) \to \mathcal C_{d_1}$ is a bijection. More precisely,  $f^{m_0}$ has a polynomial-like restriction which is hybrid equivalent to some polynomial in $\mathcal C_{d_1}$ for each map $f \in \mathcal C(\lambda_{f_0})$, where $m_0$ is the period of $0$ under $f_0$. On the other hand, $f_0$ can be tuned with any polynomial $g\in \mathcal C_{d_1}$.  As a consequence, we conclude that the straightening map  $\chi_{f_0}$ is a homeomorphism from $\mathcal C(\lambda_{f_0})$ onto the Mandelbrot set when $d_1=2$.
This together with the main result in~\cite{SW} solve the problem for primitive tuning for cubic polynomials with connected Julia sets thoroughly.
\end{abstract}
\section{Introduction}
 Tuning and straightening  were first introduced by Douady-Hubbard~\cite{DH2} to prove the existence of baby Mandelbrot sets in the quadratic family. Roughly speaking, tuning is a procedure to replace the bounded superattracting Fatou components with the copies of a filled Julia set of another polynomial and respect some combinatorial properties.
Douady-Hubbard proved any quadratic polynomial which has a periodic critical point can be tuned with any quadratic polynomials in the Mandelbrot set $\mathcal M$. 

In~\cite{IK}, Inou-Kiwi generalized the definition of the straightening maps for higher degree polynomials. They used some combinatorial argument to show that any primitive postcritically finite hyperbolic polynomial can be tuned with any hyperbolic generalized polynomial with fiber-wise connected Julia set. In~\cite{SW}, Shen and the author used quasiconformal surgery and ran Thurston's Algorithm to prove that any primitive postcritically finite hyperbolic polynomial can be tuned with any  generalized polynomial with fiber-wise connected Julia set. Therefore, the problem for primitive tuning had been solved thoroughly for hyperbolic  polynomials.

The question which remains is whether primitive non-hyperbolic  polynomials can be tuned with  generalized polynomials or not. Let us consider the family $\mathcal P_d$ of  polynomials of degree $d$ $(d\ge 2)$ normalized as the following form
\[f:z\mapsto z^{d}+a^{d-1}z^{d-1}+\cdots +a_2z^2+a_0.\]
Note that each map $f\in \mathcal P_d$ is monic and has a critical point $0$.
The set $$\mathcal C:=\{f\in \mathcal P_d\mid J_f~\text{is connected}\}$$
 is called the {\em connectedness locus of degree $d$}. 
Let $f_0\in \mathcal C_{d_1+d_2}$ be a polynomial with a periodic critical point $0$ of multiplicity $d_1-1$ and a Julia critical point $c_0$ of multiplicity $d_2$. Let $\lambda_{f_0}$ denote the rational lamination of $f_0$. We believe that if $f_0$ is renormalizable at the free critical point $c_0$, then there is a hyperbolic post-critically finite polynomial $f_*\in \mathcal C_{d_1+d_2}$ with non-trivial rational lamination such that $\lambda_{f_*}\subset \lambda_{f_0}$. 
The primitive tuning for such a hyperbolic post-critically finite polynomial $f_*$ had been studied thoroughly in \cite{SW}. So we will restrict our attention to the case that $f_0$ is non-renormalizable at the free critical point $c_0$. There is no loss of generality in assuming $f_0$ is free of neutral periodic points by \cite[Theorem~5.17]{IK}. Let  $\mathcal C(\lambda_{f_0})$ denote the set of {\em combinatorially $\lambda_{f_0}$- renormalizable} maps. For the precise definition of $\mathcal C(\lambda_{f_0})$, see \S\ref{sec:tuning}. Set $g\in \mathcal C_{d_1}$. We say $f_0$ {\em can be tuned with $g$ } if there exists $f\in \mathcal C(\lambda_{f_0})$ such that $f^{m_0}$ has a polynomial-like restriction which is hybrid equivalent to $g$ (and the hybrid conjugacy respects the corresponding external markings). Such an $f$ is called a {\em tuning} of $f_0$ by $g$. We say $f_0$ is {\em primitive} if any two Fatou components of $f_0$ have disjoint closures.

The aim of this paper is to show:
\begin{Theorem A} Let $f_0\in \mathcal C_{d_1+d_2}$ be a primitive polynomial with a periodic critical point $0$ of multiplicity $d_1-1$ and a Julia critical point $c_0$ of  multiplicity $d_2$. Fix an internal angle system $\alpha$ for $f_0$. If $f_0$ has no neutral periodic points  and $f_0$ is non-renormalizable at $c_0$, then the straightening map $\chi_{f_0}:\mathcal C(\lambda_{f_0}) \to \mathcal C_{d_1}$  induced by $f_{0}$ and $\alpha$  is a bijection.
\end{Theorem A}

The proof falls naturally into two parts: surjectivity and injectivity. To show the surjectivity of $\chi_{f_0}$, it suffices to show:

\begin{Theorem B} Under the assumptions in the Theorem~A, $f_0$ can be tuned with any polynomial  $g\in\mathcal C_{d_1}$.
\end{Theorem B}
The basic idea is to use qc surgery and apply \cite[Theorem~5.1]{SW}. The main difficulty is that we cannot construct a desired tuning by using qc surgery and Thurston's Algorithm directly. In~\cite{SW}, one can construct a quasiregular map with certain dynamical properties so that all the ramification points lie in the union of all the small filled Julia sets. Thus the postcritical set will not affect the combinatorics (the landing relation for external rays). However, $f_0$ has a free Julia critical point in our case. This does affect the combinatorics when we do qc surgery.  To overcome this difficulty, we construct the tuning by approximation instead of constructing it directly. More precisely, we first control the orbit of the free critical point carefully when we do qc surgery to show there is a sequence $\{f_k\}$ of polynomials such that $f^m_k$ is hybrid equivalent to a given $g\in \mathcal C_{d_1}$ and $f_k$ has the same combinatorics as $f_0$ up to depth $k$ for all $k\in \mathbb N$. Then we show the limit $f$ of $\{f_k\}$ is our desired tuning.

It is worth pointing out that Douady-Hubbard's method does not work since $\mathcal C_{d_1}$ may  contain multi-critical maps. Even for the case $d_1=2$, one cannot apply Douady-Hubbard's Theorem directly since $\mathcal C(\lambda_{f_0})$ is not an analytic family in general.

The proof of the injectivity of $\chi_{f_0}$ is based on the arguments developed in~\cite{AKLS}. For any $f,\widetilde f\in \mathcal C(\lambda_{f_0})$ with $\chi_{f_0}(f)=\chi_{f_0}(\widetilde f)$, we show that $f$ and $\widetilde f$ are conformally conjugate. To this end, We first construct a sequence $\{\Phi_k\}$ of qc maps such that $\Phi_k$  is a weak pseudo-conjugacy between $f$ and $\widetilde f$ up to  depth $k$ for all $k\in \mathbb N$. Then we prove some technical lemmas to show that there exists a sequence $\{k_n\}$ so that we can make $\Phi_{k_n}$ be a pseudo-conjugacy between $f$ and $\widetilde f$ up to depth $k_n$ by adjustment. Moreover, $\{\Phi_{k_n}\}$ can be made uniformly qc. Finally, we show the limit $\Phi$ of $\{\Phi_{k_n}\}$ is a conformal conjugacy between $f$ and $\widetilde f$. 

Combining Theorem~A,~\cite[proposition~4.7]{McM2} and~\cite[Theorem~8.1]{IK}, we conclude:
\begin{Theorem C}Under the assumptions in Theorem~A, if $d_1=2$ then the straightening map $\chi_{f_0}:\mathcal C(\lambda_{f_0}) \to \mathcal C_2=\mathcal M_2$ is a homeomorphism, where $\mathcal M_2$ is the Mandelbrot set.
\end{Theorem C}
 
Theorem~C together with the main results in \cite{SW} solve the problem for primitive tuning for cubic maps with connected Julia sets thoroughly. For more related topics on qc surgery, we refer the readers to \cite{BF,EY,Shishi}.

The paper is organized as follows. In \S\ref{sec:tuning}, we review some known facts on rational laminations and give the precise definition of tuning. In \S\ref{sec:puzzle} and \S\ref{sec:Kahn}, we follow the ideas in~\cite{SW} to construct a specific Yoccoz puzzle which satisfies some certain dynamical properties and prove a useful lemma on qc distortion. Our case is slightly different from that of~\cite{SW} by virtue of the existence of  the free Julia critical point. But it is fair to say the proofs are similar in spirit. The readers who are familiar with the results in~\cite{SW} may skip the proofs in these two section. In \S~5, we build up qc weak pseudo-conjugacies between $f$ and maps in $\mathcal C(\lambda_{f_0})$, which turns out to be one of the most powerful tool in the proof of Theorem~A.  The aim of section~\ref{sec:qc surgery} is to prove Theorem~B.  To this end, we first use qc surgery and apply \cite[Theorem~5.1]{SW} to construct  a sequence $\{f_k\}$ from $f_0$ and a given polynomial  $g\in \mathcal C_{d_1}$ such that $f_k$ has the same combinatorics as $f_0$ up to depth $k$ and $f^{m_0}_k$ has a polynomial-like restriction hybrid equivalent to $g$ for all $k\in \mathbb N$. Then we show the limit of such a sequence is a tuning of $f_0$ by $g$. In \S\ref{sec:homeomorphism}, we prove some technical lemmas and then prove Theorem~A.

\medskip
\noindent
{\bf Acknowledgment.} The author would like to thank Hiroyuki Inou and Weixiao Shen for helpful discussions and comments. This work was supported by {\bf CSC}.

\section{Laminations and Tuning}\label{sec:tuning}
Throughout this paper we will fix a primitive non-hyperbolic  polynomial $f_0 \in \mathcal C_{d_1+d_2}$ with a periodic critical point $0$ of multiplicity $d_1-1$ and a free Julia critical point $c_0$ of multiplicity $d_2$. \par
In this section, we will recall some background about laminations and give the definition of tuning.

\subsection{Rational laminations}For each polynomial $f\in \mathcal P_{d}$, the {\em Green function} is defined as
$$G_f(z)=\lim_{n\to\infty} \frac{1}{{d}^n}\log^+ |f^n(z)|,$$
where $\log^+=\max(\log , 0).$

The set 
\[D_f(\infty):=\{z\mid f^n(z)\to \infty\}\]
is called the {\em basin of infinity} of $f$.
The complement  $K(f)=K_f=\C\setminus D_f(\infty)$ of the basin of infinity is called the {\em filled Julia set} of $f$. The common boundary of $K(f)$ and $D_f(\infty)$ is called  the {\em Julia set} of $f$ and denoted by $J(f)=J_f$.

There exists a unique conformal map \[\phi_f:D_f(\infty)\setminus \{z\in \C\mid G_f(z)\le r_f\} \to \{z\mid |z|>r_f\}\]
such that $\phi(z)/z\to 1$ as $z\to\infty$ and such that $\phi_f\circ f(z)=(\phi_f(z))^d$ on $D_f(\infty)\setminus \{z\in \C\mid G_f(z)\le r_f\}$, where $r_f=\max \{G_{f}(c)\mid c~\text{is a critical point of}~ f\}$. Moreover, $G_f(z)=\log|\phi_f(z)|$.

The {\em equipotential curve of height $h$ of $f$} is defined as 
\[E_{f}(h)=\{z\mid G_f(z)=h\},\]
and the  {\em external ray of angle $t \in \R/\Z$}  as
\[\mathcal R_{f}(t)=\{\phi_f^{-1}(re^{i2\pi t})\mid r_f<r<\infty\}.\]
An external ray $R_f(t)$ which is not bifurcated has a smooth extension: $\mathcal R_{f}(t)=\{\phi_f^{-1}(re^{i2\pi t})\mid 1<r<\infty\}$.
We say the external ray $\mathcal R_f(t)$ {\em lands} at some point $z_0$ if it is not bifurcated and $\lim_{r\to 1}\phi_f^{-1}(re^{i2\pi t})=z_0$. 
\begin{definition}[Rational lamination]
 The {rational lamination} $\lambda_f$ of $f \in \mathcal C_d$ is the equivalence relation on $\mathbb Q/\Z$ so that $s\sim t$ if and only if $\mathcal R_f(s)$ and $\mathcal R_f(t)$  land at a common point. 
 \end{definition}

We say $f\in \mathcal C_d$ is {\em combinatorially $\lambda_{f_0}$-renormalizable} if $\lambda_f\supset \lambda_{f_0}$. Let $\mathcal C(\lambda_{f_0})$ denote the set of all the  combinatorially $\lambda_{f_0}$-renormalizable maps. By~ \cite[Theorem~8.1]{IK}, $\mathcal C(\lambda_{f_0})$ is compact.

\subsection{Renormalization and Tuning}Recall that {\em a polynomial-like map} is a holomorphic proper map $g:U\to V $, where $U\Subset V$  are quasidisks  in $\C$. We call the set $K(g):=\{z\in U\mid f^k(z)\in U~\text{for all}~k\in\mathbb N\}$ {\em the filled Julia set of $g$}.  Two polynomial-like maps $g$ and $\widetilde g$ are said to be {\em qc conjugate} if there exists a qc map $\varphi:\C\to \C$ such that $\varphi\circ g=\widetilde g\circ \varphi$ near $K(g)$. We say $g$ and $\widetilde g$ are {\em hybrid equivalent} if they are qc conjugate and there exists a corresponding qc conjugacy $\varphi$ between them such that $\partial\bar \varphi=0$ a.e. on $K(g)$. For a more detailed treatment,  we refer the reader to~\cite{DH2,L2}.

Let $f\in \mathcal P_d$ and let $\mathrm{Crit}(f)$ denote the set of critical points of $f$. We say $f$ is {\em renormalizable at $c$} for some $c\in \mathrm{Crit}(f)$ if there exist quasidisks $c\in \Omega\Subset \Omega'$ and $p\ge 1$ such that
\begin{itemize}
\item $f^p|_{\Omega}:\Omega\to \Omega'$ is a polynomial-like map with connected Julia set;
\item for any $c'\in\mathrm{Crit}(f)$,  there is at most one $0\le j<p-1$ such that  $c'\in f^j(\Omega)$;
\item $p>1$ or $\mathrm{Crit}(f) \not\subset \Omega$.
\end{itemize}

The Douady-Hubbard Straightening Theorem says that each polynomial-like map $g$ with connected Julia set is hybrid to a unique polynomial up to an affine conjugacy. To determine the straightening uniquely, it is convenient to introduce an external marking for $g$.
\begin{definition}[Access and external marking]Let $g: U \to V$ be a polynomial-like map connected filled Julia set. A path to $K(g)$ is a continuous map
$\gamma: [0, 1] \to U$ such that $\gamma ((0, 1]) \subset U \setminus K(g)$ and $\gamma(0) \in J(g)$. We say two paths $\gamma_0$ and $\gamma_1$
to $K(g)$ are homotopic if there exists a continuous map $\tilde \gamma : [0, 1] \times [0, 1] \to U$ such that
\begin{enumerate}
\item $t \mapsto \tilde \gamma (s, t)$ is a path to $K(g)$ for all $s \in [0, 1]$;
\item $\tilde \gamma (0, t) = \gamma_0(t)$ and $\tilde\gamma (1, t) = \gamma_1(t)$ for all $t \in [0, 1]$;
\item $\tilde \gamma (s, 0) = \gamma_0(0)$ for all $s \in [0, 1]$.
\end{enumerate}
An access to $K(g)$ is a homotopy class of paths to $K(g)$.\par
An external marking of $f$ is an access $\Gamma$ to $K(g)$ which is forward
invariant in the following sense. For every representative $\gamma$ of $\Gamma$, the connected component of $g(\gamma)\cap U$ which intersects $J(f)$ is also a representative of $\Gamma$.
\end{definition}
The {\em standard external marking} for a polynomial $f$ with connected Julia set is defined as the homotopy class of $\mathcal R_f(0)$ in the sense of paths to $K(f)$.
 
 Recall that $f_0 \in \mathcal C_{d_1+d_2}$ is a primitive polynomial with a periodic critical point $0$ of multiplicity $d_1-1$ and a free Julia critical point $c_0$ of multiplicity $d_2$. Let $U_0$ be the Fatou component of $f_0$ which contains the superattracting periodic point $0$. It is well-known that $\partial U_0$ is a Jordan curve (see~\cite{RY}). Let $m_0$ be the period of $0$ under $f_0$. Note that $f^{m_0}|_{U_0}$ has a polynomial-like extension $F_0:U\to V$ such that $K(F_0)=\overline{U_0}$. 
 
A polynomial $f\in\mathcal C(\lambda_{f_0})$ is said to be {\em $\lambda_{f_0}$-renormalizable} if there exist quasidisks $U'\Subset V'$ such that $F=f^{m_0}|_{U'}:U'\to V'$ is a polynomial-like map with the filled Julia set $$K(F)=K(U_0,f):=\bigcap\limits_{\theta\sim_{\lambda_{f_0}}\theta'} S(\theta,\theta', U_0)\cap K(f),$$
where $S(\theta,\theta',U_0)$ is the component of $\C\setminus \overline{\mathcal R_{f}(\theta)\cup \mathcal R_{f}(\theta')}$ which contains external rays landing on $\partial U_0$. Such a polynomial-like $F$ is called a {\em $\lambda_{f_0}$-renormalization for $f$}. Note that a $\lambda_{f_0}$-renormalization $F$ for $f$ is unique up to the hybrid conjugacy.

 To define a canonical external marking for the $\lambda_{f_0}$-renormalization $F$, we need the definition of internal angle system which was introduced by Inou-Kiwi~\cite{IK}. An {\em internal angle system for $f_0$} is a homeomorphism $\alpha:\partial U_0\to \R/\Z$ such that
 $\alpha\circ f^{m_0}=m_{d_1}\circ \alpha \mod 1$ on $\partial U_0$.
Such a homeomorphism is determined by $\alpha^{-1}(0)$ which is a $f^{m_0}_0$-fixed point. Choose an external angle $\theta_0$ such that $\mathcal R_{f_0}(\theta_0)$ land at $\alpha^{-1}(0)$. The {\em external marking for $F$ induced by $\alpha$} is defined as the homotopy class of the component of $\overline{\mathcal R_{f}}(\theta)\cap U$ which intersects $K(F)$ in the sense of paths to $K(F)$.
The homotopy class does not depend on the choice of $\theta_0$ (see~\cite[Remark~3.16]{IK}), so it is well defined.
\begin{definition}[Tuning]
Let $f_0$ and an internal angle system $\alpha$ be given. We say $f_0$ can be {\em tuned with a polynomial $g\in \mathcal C_{d_1}$}  if there exists a polynomial $f\in \mathcal C(\lambda_{f_0})$ and quasidisks $U'\Subset V'$ with the following properties:
\begin{itemize}
\item $F=f^{m_0}|_{U'}:U'\to V'$ is a $\lambda_{f_0}$-renormalization for $f$ which is hybrid equivalent to $g$;
\item there exists a corresponding hybrid conjugacy $\varphi$ between $F$ and $g$ which respects the external markings, that is, $\varphi$ maps the external marking for $F$ induced by $\alpha$ to the standard external marking for $g$.
\end{itemize}
Such an $f$ is called a {\em tuning} of $f_0$ by $g$.\par
\end{definition}
Clearly, $f_0$ can be tuned with  $z\mapsto z^{d_1}$ and $f_0$ itself is a tuning of $f_0$ by $z\mapsto z^{d_1}$. In this paper, we show that $f_0$ can be tuned with any $g\in \mathcal C_{d_1}$:
\begin{Theorem B} Let $f_0\in \mathcal C_{d_1+d_2}$ be a primitive  polynomial with a periodic critical point $0$ of multiplicity $d_1-1$ and a Julia critical point $c_0$ of multiplicity $d_2$. Fix an internal angle system $\alpha$ for $f_0$. If $f_0$ has no neutral periodic points  and $f_0$ is non-renormalizable at $c_0$, then $f_0$ can be tuned with any $g\in \mathcal C_{d_1}$.
\end{Theorem B}
The proof of Theorem~B will be given in section~\ref{sec:qc surgery}.

\section{Yoccoz puzzle}\label{sec:puzzle}
From this section to the end of this paper, we assume $f_0$ is not renormalizable at $c_0$ and has no neutral periodic points.
A finite set $Z$ is said to be {\em $f_0$-admissible} if the following hold:
\begin{itemize}
\item $f_0(Z)\subset Z$,
\item each periodic point in $Z$ is repelling and does not lie in the orbit of the critical point $c_0$;
\item for each $z\in Z$,  there exist at least two external rays landing at $z$.
\end{itemize}
Note that the definition differs slightly from that of \cite{SW} as we require that the periodic points in $Z$ do not lie in the orbit of the free critical point. This condition is used to guarantee the free critical point is not contained in the boundary of any Yoccoz puzzle pieces which are defined as following. 

Let $\Gamma_0=\Gamma^Z_0$ denote the union of all the external rays landing on $Z$, the set $Z$ itself and the equipotential curve of height $1$. 
For each $n\ge 1$, define $\Gamma^Z_n=f_0^{-n}(\Gamma^Z_0)$.
A bounded component of $\C\setminus \Gamma^Z_n$ is called a {\em $Z$-puzzle piece} of depth $n$. Let $Y^{(n)}_Z=Y^{(n)}_Z(0)$ and $X^{(n)}_Z=Y^{(n)}_Z(c_0)$ denote the puzzle pieces of depth $n$ which contains $0$ and $c_0$ respectively.

The aim of this section is to prove the following:

\begin{Theorem}\label{thm:puzzle} Let $f_0\in \mathcal C_{d_1+d_2}$ be a primitive  polynomial with a periodic critical point $0$ of multiplicity $d_1-1$ and a Julia critical point $c_0$ of multiplicity $d_2$.  If $f_0$ has no neutral cycles and $f_0$ is non-renormalizable at $c_0$,  then there exists a buried biaccessible repelling periodic point $\tau_0$ with the following properties:
\begin{enumerate}
 \item $Z_0:=\mathrm{orb}(\tau_0)$ is an $f_0$-admissible  finite set;
 \item $\bigcap_{n=0}^\infty Y^{(n)}_{Z_0}=\overline{U_0}$, where $U_0$ is Fatou component of $f_0$ which contains $0$;
 \item $\bigcap_{n=0}^\infty X^{(n)}_{Z_0}=\{c_0\}$;
 
\end{enumerate}
\end{Theorem}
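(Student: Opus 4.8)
The plan is to produce a biaccessible repelling periodic point $\tau_0$ whose orbit separates $c_0$ from $\overline{U_0}$ in $J(f_0)$, and then to read off (1)--(3) from the associated Yoccoz puzzle by means of the standard renormalize-or-shrink dichotomy. I would first record the coarse dynamics: as $f_0$ has no neutral cycles and its only critical points are $0$ (superattracting of period $m_0$) and $c_0\in J(f_0)$, every cycle of bounded Fatou components must absorb a critical point, so $\mathrm{orb}(U_0)$ is the \emph{unique} one, $\partial U_0$ is a Jordan curve, and $f_0^{m_0}$ is conjugate on $\partial U_0$ to $z\mapsto z^{d_1}$. Since $\overline{U_0}$ is then a proper Jordan subdomain of the connected set $K(f_0)$ while $c_0\in J(f_0)\setminus\partial U_0$, the lamination $\lambda_{f_0}$ is non-trivial; hence by the landing theorem for periodic external rays there are repelling periodic points receiving at least two rays, and the orbit of such a point consists of biaccessible points since no point of it is critical and so $f_0$ is a local homeomorphism along it. There are infinitely many such cycles, only finitely many can fail to be buried (the non-buried ones lie on the finitely many curves $\partial f_0^j(U_0)$), and at most one is contained in $\mathrm{orb}(c_0)$.

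To pin down $\tau_0$ I would run an auxiliary puzzle for the $f_0$-admissible set $Z'=\mathrm{orb}(\beta)$, with $\beta\in\partial U_0$ an $f_0^{m_0}$-fixed point. Non-renormalizability at $c_0$ forces the critical nest $X^{(n)}_{Z'}$ to shrink to $\{c_0\}$: the set $\bigcap_n X^{(n)}_{Z'}$ is an $f_0^p$-invariant continuum through $c_0$ for some $p\ge1$, and the renormalize-or-shrink dichotomy for Yoccoz puzzles --- using that there are no neutral cycles, that $\mathrm{orb}(U_0)$ is the only cycle of bounded Fatou components, and that such maps carry no wandering continua --- shows that a non-degenerate intersection would make $f_0$ renormalizable at $c_0$. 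Likewise $\bigcap_n Y^{(n)}_{Z'}=\overline{U_0}$. Hence $c_0$ and $\overline{U_0}$ are separated at some finite depth, and a short combinatorial argument (propagating this separation under pullback, using that $f_0$ is repelling along the relevant orbit) produces a buried biaccessible repelling periodic point $\tau_0\notin\mathrm{orb}(c_0)$ whose orbit separates $c_0$ from $\overline{U_0}$. With $Z_0=\mathrm{orb}(\tau_0)$, property (1) is immediate: $Z_0$ is forward invariant as an orbit, its points are repelling (no neutral cycles) and biaccessible, and $Z_0\cap\mathrm{orb}(c_0)=\emptyset$.

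It remains to verify (2) and (3) for the puzzle built from $Z_0$. Since $\tau_0$ is buried, $Z_0\cap\overline{U_0}=\emptyset$ and no ray of $\Gamma^{Z_0}_0$ meets $U_0$; as $f_0^{m_0}(\overline{U_0})=\overline{U_0}$, pulling back inductively keeps $\overline{U_0}$ inside a single puzzle piece, so $\overline{U_0}\subset Y^{(n)}_{Z_0}$ for every $n$ and $\bigcap_n Y^{(n)}_{Z_0}\supseteq\overline{U_0}$. For the reverse inclusion, $\bigcap_n Y^{(n)}_{Z_0}$ is an $f_0^{m_0}$-invariant continuum containing $\overline{U_0}$; were it strictly larger, $f_0^{m_0}$ would be renormalizable near $0$ with filled Julia set strictly containing $\overline{U_0}$, contradicting the fact recorded in \S\ref{sec:tuning} that $f_0^{m_0}|_{U_0}$ extends to a polynomial-like $F_0$ with $K(F_0)=\overline{U_0}$, together with the shrinking of the puzzle pieces attached to a minimal primitive renormalization; this proves (2). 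For (3), $c_0\in\bigcap_n X^{(n)}_{Z_0}$ trivially, and since $\tau_0$ separates $c_0$ from $\overline{U_0}$ the intersection is a continuum through $c_0$ disjoint from $\overline{U_0}$, so the same dichotomy (non-renormalizability at $c_0$, no neutral cycle, no other Fatou cycle, no wandering continua) forces it to equal $\{c_0\}$.

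The crux, I expect, is the shrinking of the critical nest around $c_0$, that is, converting the combinatorial fact that $f_0$ is non-renormalizable at $c_0$ into the metric fact that $\bigcap_n X^{(n)}_{Z_0}=\{c_0\}$: this is where combinatorics must be upgraded to metric control and is the counterpart here of Yoccoz's theorem on local connectivity of the Mandelbrot set. A secondary point requiring care is that one \emph{single} admissible set $Z_0$ must simultaneously force $\bigcap_n Y^{(n)}_{Z_0}=\overline{U_0}$ and $\bigcap_n X^{(n)}_{Z_0}=\{c_0\}$, which is precisely why $\tau_0$ is chosen so that its orbit separates $c_0$ from $\overline{U_0}$.
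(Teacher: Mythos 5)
Your overall architecture (find a buried biaccessible repelling cycle, build the puzzle, use non-renormalizability at $c_0$ to shrink the critical nest $X^{(n)}$) matches the paper, and for item (3) your reliance on the renormalize-or-shrink dichotomy of Kahn--Lyubich and Kozlovski--van Strien is exactly what the paper does. But there is a genuine gap in your treatment of item (2), and it sits precisely at the point the paper spends most of its effort on. For an arbitrary admissible set $Z$, the end $E=\bigcap_n Y^{(n)}_{Z}$ is the filled Julia set of a polynomial-like restriction of $f_0^{s}$, where $s\le m_0$ is the \emph{first return time} of $0$ to $E$; nothing in your setup forces $s=m_0$. If $s<m_0$, then $E$ is the filled Julia set of an intermediate renormalization of degree $d_1$ and strictly contains $\overline{U_0}$ (it contains $f_0^{s}(0)$, hence $\overline{f_0^{s}(U_0)}$ as well). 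This coexists peacefully with the fact that $f_0^{m_0}|_{U_0}$ extends to a polynomial-like map $F_0$ with $K(F_0)=\overline{U_0}$ --- a map can admit nested renormalizations with different filled Julia sets --- so the contradiction you invoke for the reverse inclusion in (2) is not a contradiction. Choosing $\tau_0$ so that its orbit separates $c_0$ from $\overline{U_0}$ is aimed at (3) and does nothing for this issue; what must be arranged is that $\mathrm{orb}(\tau_0)$ separates $0$ from $f_0^{s}(0)$ for every $0<s<m_0$. The paper achieves this by an induction your proposal has no counterpart for: straighten $f_0^{s_1}:Y^{(n+s_1)}_{Z_1}\to Y^{(n)}_{Z_1}$ to a postcritically finite hyperbolic polynomial $Q_1$ of degree $d_1$; if $s_1<m_0$ then $Q_1\ne z^{d_1}$, so $Q_1$ has a buried biaccessible repelling periodic point by \cite[Lemma~3.3]{SW}, which is transported back to $f_0$ via \cite{LP} to give a new admissible orbit whose return time $s_2$ is strictly larger; since $s_k\le m_0$ is strictly increasing, the process terminates with $s_k=m_0$, and only then does $Q_k=z^{d_1}$ force $E=\overline{U_0}$.

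Two smaller points. First, your auxiliary set $Z'=\mathrm{orb}(\beta)$ with $\beta\in\partial U_0$ an $f_0^{m_0}$-fixed point need not be admissible: such a $\beta$ need not be biaccessible (compare the $\beta$-fixed point of a quadratic polynomial), which is why the paper needs Lemmas~\ref{lem:puuzle-1}--\ref{lem:period} and the pigeonhole count of Lemma~\ref{lem:puzzle} to manufacture a point that is simultaneously biaccessible, repelling, \emph{buried}, and off $\mathrm{orb}(c_0)$; your assertion that only finitely many biaccessible repelling cycles fail to be buried is also false, since $\partial U_0$ alone carries infinitely many of them. Second, your "short combinatorial argument propagating the separation under pullback" that is supposed to produce the buried $\tau_0$ is not an argument; the paper's counting of $f_0^{\ell}$-fixed rays against repelling $f_0^{\ell}$-fixed points (and the second count at the ray period $\ell'$ to avoid $\mathrm{orb}(c_0)$) is the actual mechanism and should be supplied.
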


The proof is slightly different from that of ~\cite[Theorem~3.1]{SW} since $f_0$ has a free Julia critical point in this case. However, the methods of the proofs are similar in spirit. To make this section self-contained, we give a proof of Theorem~\ref{thm:puzzle} here. The readers who are familiar with the results and proofs in~\cite{SW} may skip the proof of Theorem~\ref{thm:puzzle}.

We say a point $w\in J_{f_0}$ is {\em bi-accessible} if it is the common landing points of two distinct external rays.
A point in $J_{f_0}$ is called {\em buried} if it is not in the boundary of any bounded Fatou component.

\begin{Lemma}[{\cite[Lemma~3.1]{SW}}]\label{lem:puuzle-1} Let $f$ be a polynomial with connected Julia set.  Then any bi-accessible point in the boundary of a bounded Fatou component is eventually periodic.
\end{Lemma}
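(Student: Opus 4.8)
\emph{Proof strategy.} The plan is to reduce the statement to a combinatorial fact about external angles and then to exploit the repulsion of angle--multiplication at rational fixed points. Write $D:=\deg f$ and let $m_D(t)=Dt$ on $\R/\Z$. First I would record the standard rigidity for landing points (Douady--Hubbard, Milnor): if $\mathcal R_f(\theta)$ lands at $w$, then $w$ is eventually periodic if and only if $\theta\in\Q/\Z$. For the nonobvious direction one uses that a rational angle is preperiodic under $m_D$, that a periodic angle's ray lands at a repelling or parabolic periodic point (a landing point lies in $J_f$, so it is neither attracting, superattracting, nor a Siegel periodic point, and it is known that no external ray lands at a Cremer point), and that at such a point only finitely many rays land, with $m_D$ permuting them cyclically. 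Consequently, for biaccessible $w$ with $\mathcal R_f(\theta_1),\mathcal R_f(\theta_2)$ landing at it, $w$ is eventually periodic if and only if $\theta_1,\theta_2\in\Q/\Z$. So it suffices to prove: if $w\in\partial U$ for a \emph{bounded} Fatou component $U$, then $\theta_1,\theta_2$ are rational. Suppose not; then $w$ is not eventually periodic and $\theta_1,\theta_2$ are irrational, and I will derive a contradiction.

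By Sullivan's no--wandering--domains theorem $U$ is eventually periodic, so after replacing $w$ by a forward iterate and passing to $g:=f^{\,p}$ for a suitable period $p$ (chosen large enough that $\delta:=D^{\,p}\ge 3$), I may assume $U$ is $g$--fixed and each $w_k:=g^k(w)$ lies on $\partial U$, biaccessible via $\mathcal R_f(\delta^k\theta_1),\mathcal R_f(\delta^k\theta_2)$ --- unless at some stage $\delta^k(\theta_1-\theta_2)\in\Z$ (a ``collapse''), in which case the two rays at $f^{\,k-1}(w)$ have a common image, forcing $f^{\,k-1}(w)\in\mathrm{Crit}(f)$; I would dispose of this degenerate case separately, using that iterated collapses produce critical points each in the forward orbit of the previous one, which $\#\,\mathrm{Crit}(f)<\infty$ forbids (a repetition would yield a periodic critical point in $J_f$). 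Assuming no collapse, set $\Lambda_k:=\overline{\mathcal R_f(\delta^k\theta_1)}\cup\{w_k\}\cup\overline{\mathcal R_f(\delta^k\theta_2)}$, a simple arc properly embedded in $\C$; since $\partial U\subset J_f$ while $\Lambda_k\setminus\{w_k\}$ escapes, $\overline U\cap\Lambda_k=\{w_k\}$, so $U$ lies in one complementary component $W_k^+$ of $\Lambda_k$; call the other $W_k^-$.

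The heart of the argument is the following. Since $w$ is not eventually periodic, the arcs $\Lambda_k$ are pairwise disjoint: a shared landing point or a shared ray $\mathcal R_f(\delta^k\theta_i)=\mathcal R_f(\delta^{k'}\theta_j)$ forces $w_k=w_{k'}$, except when $i=j$, which would make $\theta_i$ rational. As $g(\Lambda_k)=\Lambda_{k+1}$, and $\Lambda_{k+1}$ contains $w_{k+1}\in\partial U\subset\overline{W_k^+}$ with $w_{k+1}\notin\Lambda_k$, we get $\Lambda_{k+1}\subset W_k^+$, and then the nesting $\overline{W_k^-}\subseteq W_{k+1}^-$ (the connected set $W_k^-\cup\Lambda_k$ misses $U$ and $\Lambda_{k+1}$, hence lies on the $W_{k+1}^-$--side). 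Therefore the arcs $I_k^+\subset\R/\Z$ of angles whose rays lie in $W_k^+$ form a decreasing nested family, so their endpoints $\{\mu_k,\nu_k\}=\{\delta^k\theta_1,\delta^k\theta_2\}$ converge monotonically (from opposite sides) to the endpoints $\mu_\infty,\nu_\infty$ of the limit arc. Passing to the limit in $\{\mu_{k+1},\nu_{k+1}\}=m_\delta(\{\mu_k,\nu_k\})$ shows $m_\delta$ permutes $\{\mu_\infty,\nu_\infty\}$; replacing $g$ by $g^2$ if necessary (which squares $\delta$), we may assume $\mu_\infty$ is $m_\delta$--fixed, hence rational, hence a repelling fixed point of $m_\delta$ of multiplier $\delta\ge 3$. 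For large $k$ we then have $m_\delta(\mu_k)=\mu_{k+1}$ and (distances in $\R/\Z$) $\|\mu_{k+1}-\mu_\infty\|=\delta\,\|\mu_k-\mu_\infty\|$, which with $\|\mu_{k+1}-\mu_\infty\|\le\|\mu_k-\mu_\infty\|$ (monotone convergence) forces $\mu_k=\mu_\infty$ eventually; so some $\delta^k\theta_i$ is rational, whence $\theta_i$ is rational --- contradiction. When the limit arc degenerates to a point, $\mu_\infty=\nu_\infty$, one argues the same way, now using $\delta\ge 3$ to compare the distances of both endpoints of $I_k^+$ to $\mu_\infty$.

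The step I expect to be the main obstacle is making the separation bookkeeping fully rigorous --- keeping track of which complementary component of each $\Lambda_k$ contains $U$, and that the labelling of $\delta^k\theta_1$ against $\delta^k\theta_2$ stays compatible with the nesting of the arcs $I_k^+$ --- together with completing the flagged ``collapse'' case; once these are settled, the contradiction is routine bookkeeping with the expanding circle map $m_\delta$. In the situations actually needed in this paper the maps $f_0$ and $f\in\mathcal C(\lambda_{f_0})$ have no neutral cycles, which removes the rotation--domain subtleties at once.
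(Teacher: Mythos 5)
Your overall strategy---study the ray pairs $\Lambda_k$ at $w_k=g^k(w)$, use that $U$ is $g$--fixed to distinguish the two complementary domains $W_k^{\pm}$ of $\Lambda_k$, and derive rationality of the $\theta_i$ from the expansion of $m_\delta$---is sound and in the spirit of the usual argument; the paper itself gives no independent proof but refers to \cite{SW}. However, the nesting step is not merely under-justified: it is false, and this is more than the ``bookkeeping'' you flag at the end.

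You claim $\overline{W_k^-}\subseteq W_{k+1}^-$, justified by ``the connected set $W_k^-\cup\Lambda_k$ misses $U$ and $\Lambda_{k+1}$, hence lies on the $W_{k+1}^-$--side.'' Missing $U$ does not place a connected set in $W_{k+1}^-$, since $W_{k+1}^+$ is far larger than $U$; and in fact the opposite inclusion holds. Indeed $w_k\in\partial U\subset\overline{W_{k+1}^+}$ and $w_k\neq w_{k+1}$, so $w_k\notin\Lambda_{k+1}$, whence $w_k\in W_{k+1}^+$. Therefore $\overline{W_k^-}=W_k^-\cup\Lambda_k\ni w_k$ is a connected set meeting $W_{k+1}^+$ and disjoint from $\Lambda_{k+1}$, so $\overline{W_k^-}\subset W_{k+1}^+$. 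The same reasoning applies to \emph{any} pair $k\neq k'$ (each $w_k\in\partial U$), giving $\overline{W_k^-}\subset W_{k'}^+$ for all $k\neq k'$. Thus the domains $W_k^-$ are pairwise \emph{disjoint}, not nested; equivalently the complementary arcs $I_k^-:=(\R/\Z)\setminus\overline{I_k^+}$ are pairwise disjoint, and the arcs $I_k^+$ do \emph{not} decrease. Your subsequent monotone--convergence argument for $\mu_k,\nu_k$ therefore never gets started.

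The correct picture still yields the contradiction, and more cheaply. Since the $I_k^-$ are nonempty (no collapse), pairwise disjoint, and infinitely many of them are distinct (as $w$ is not eventually periodic), $\sum_k |I_k^-|\le 1$, hence $|I_k^-|\to 0$. For $k$ large enough that $|I_k^-|<1/\delta$, the restriction $m_\delta|_{\overline{I_k^-}}$ is injective, so $m_\delta(I_k^-)$ is an arc of length $\delta|I_k^-|$ with endpoints $\{\delta^{k+1}\theta_1,\delta^{k+1}\theta_2\}$; consequently $I_{k+1}^-$ is either $m_\delta(I_k^-)$ or its complementary arc, of length $1-\delta|I_k^-|$. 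Once $|I_k^-|,|I_{k+1}^-|<1/(\delta+1)$ the second alternative is impossible, so $|I_{k+1}^-|=\delta|I_k^-|$ for all large $k$, giving geometric growth and contradicting $|I_k^-|\to 0$. No limit angle nor any rationality of a fixed point of $m_\delta$ needs to be invoked.

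Two smaller comments. First, the ``collapse'' case needs slightly more care than ``iterated collapses produce critical points in the forward orbit of one another'': after a collapse you must still know that the image point carries two distinct rays before you can continue, and that is not automatic; but once you observe that only finitely many of the (infinitely many, distinct) points $f^m(w)$ can be critical, you can simply pass to a large enough iterate and argue there. Second, your claim that the $\Lambda_k$ are pairwise disjoint is correct for the reason you give.
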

\begin{proof} See the proof of {\cite[Lemma~3.1]{SW}}.
\end{proof}

\begin{Lemma}\label{lem:puzzle0}
Under the assumption in Theorem~\ref{thm:puzzle}, $f_0$ has a bi-accessible repelling periodic point on $\partial U_0$.
\end{Lemma}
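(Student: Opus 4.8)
The plan is to locate a bi-accessible repelling periodic point on $\partial U_0$ by combining the Jordan-curve property of $\partial U_0$ with the dynamics of the first-return map $f_0^{m_0}$. Since $0$ is a periodic critical point of multiplicity $d_1-1$ and $\partial U_0$ is a Jordan curve (as recalled from \cite{RY}), the map $f_0^{m_0}|_{\partial U_0}$ is, via the Böttcher/internal-angle coordinate $\alpha$, conjugate to $m_{d_1}:\R/\Z\to\R/\Z$, $t\mapsto d_1 t$. The fixed points of $m_{d_1}$ are $j/(d_1-1)$ for $j=0,\dots,d_1-2$; each corresponds to a point $p_j=\alpha^{-1}(j/(d_1-1))\in\partial U_0$ which is fixed by $f_0^{m_0}$, hence periodic for $f_0$. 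First I would argue that each such $p_j$ is repelling: it cannot be attracting or superattracting since it lies on the Julia set, and it cannot be neutral by the standing hypothesis that $f_0$ has no neutral periodic points; so $p_j$ is repelling.

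Next I would show that at least one of these periodic points is bi-accessible, i.e.\ is the landing point of at least two distinct external rays of $f_0$. Every point of $J_{f_0}$ that is accessible from $D_{f_0}(\infty)$ and eventually periodic is the landing point of finitely many external rays, all of the same period, which are permuted by the dynamics; the points of $\partial U_0$ are accessible from infinity because $\partial U_0$ is a Jordan curve bounding $U_0$ (so locally connected boundary pieces are accessible). Since $p_j$ is repelling periodic, by the Douady–Yoccoz landing theorem there is at least one external ray landing at $p_j$, and the rays landing there are cyclically permuted by $f_0^{m_0}$. To get \emph{two} such rays at a single $p_j$, I would use a counting/combinatorial argument: the external rays landing on $\partial U_0$ together with $\partial U_0$ itself form a connected set whose complement has $d_1-1$ ``internal'' components meeting $U_0$; the boundary of $U_0$ carries $d_1-1$ fixed points of $f_0^{m_0}$, and the cyclic structure of ray-portraits forces at least one fixed point of $f_0^{m_0}|_{\partial U_0}$ to be the landing point of two or more external rays — this is the standard fact that a periodic Fatou component of a connected filled Julia set has at least one bi-accessible periodic point on its boundary (the ``root'' of the component). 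Concretely, one checks that the point $p_0=\alpha^{-1}(0)$, at which the external ray(s) defining the induced external marking land, must be bi-accessible, for otherwise $\partial U_0$ would be a single ``petal'' attached along a single ray and $f_0$ would fail to have connected Julia set outside $U_0$, contradicting $f_0\in\mathcal C_{d_1+d_2}$; alternatively, primitivity of $f_0$ together with the fact that $J_{f_0}$ is not a single Jordan curve (since $f_0$ has a second, Julia, critical point) forces the existence of cut points on $\partial U_0$, and a cut point of $J_{f_0}$ that lies on $\partial U_0$ is precisely a bi-accessible point there.

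The main obstacle I anticipate is the precise justification that some fixed point of $f_0^{m_0}|_{\partial U_0}$ supports \emph{two} external rays rather than just one: the landing theorem alone only guarantees one ray per repelling periodic point, and a priori one could imagine each $p_j$ being the landing point of a single ray, with the bi-accessible ``dividing'' structure occurring elsewhere on $J_{f_0}$. Resolving this cleanly requires either (i) invoking the general theorem on periodic Fatou components — a bounded periodic Fatou component whose closure is not all of $K_{f_0}$ has a well-defined root which is a bi-accessible repelling periodic point on its boundary — or (ii) a direct combinatorial argument with the rational lamination $\lambda_{f_0}$: the leaves of $\lambda_{f_0}$ that bound the gap associated to $U_0$ have both endpoints landing on $\partial U_0$, and such a leaf exists and is (pre)periodic because $\lambda_{f_0}$ is non-trivial (again using that $f_0$ has a free critical point with connected Julia set), so its periodic representative gives the desired bi-accessible periodic point. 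I would write the proof using route (i), citing the root-point theory for polynomial Fatou components, and then note that $\tau_0$ in Theorem \ref{thm:puzzle} will be obtained by further refining this point to a \emph{buried} one in the subsequent lemmas.
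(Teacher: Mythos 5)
Your setup is fine --- $\partial U_0$ is a Jordan curve, $f_0^{m_0}|_{\partial U_0}$ is conjugate to $m_{d_1}$, and its $d_1-1$ fixed points are repelling because they lie in $J_{f_0}$ and there are no neutral cycles --- and you correctly isolate the real difficulty, namely producing \emph{two} external rays at one of these points. But neither of your proposed resolutions closes that gap. The ``root point'' theorem you invoke in route (i) is not an off-the-shelf result in this generality: for a polynomial of degree $\ge 3$ whose Julia set is not known to be locally connected, there is no citable statement guaranteeing a bi-accessible periodic point on the boundary of a bounded periodic Fatou component. Relatedly, your claim that points of $\partial U_0$ are ``accessible from infinity because $\partial U_0$ is a Jordan curve'' conflates accessibility from $U_0$ with accessibility from $D_{f_0}(\infty)$; the Jordan curve property says nothing about rays landing from outside. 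Route (ii) is essentially circular: that the gap of $\lambda_{f_0}$ corresponding to $U_0$ is bounded by genuine leaves whose endpoints actually land on $\partial U_0$ is exactly the bi-accessibility you are trying to prove. And the parenthetical claim that a mono-accessible $p_0$ would disconnect the Julia set is false.

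The paper's proof supplies precisely the missing ingredient: it first invokes \cite[Theorem~1.1]{KS} to conclude that $J_{f_0}$ is locally connected --- this is where both standing hypotheses (no neutral cycles and non-renormalizability at $c_0$) are actually used, and note that your proposal uses the second hypothesis nowhere --- and then runs the argument of \cite[Lemma~3.2]{SW}. With local connectivity the Carath\'eodory loop $\gamma:\R/\Z\to J_{f_0}$ is a continuous surjection; if no point of $\partial U_0$ were bi-accessible, then $\gamma$ restricted to the compact set $\gamma^{-1}(\partial U_0)$ would be a continuous bijection onto the circle $\partial U_0$, hence a homeomorphism, forcing $\gamma^{-1}(\partial U_0)=\R/\Z$ and $J_{f_0}=\partial U_0$; this is impossible since $f_0$ has the Julia critical point $c_0$ (equivalently $\deg f_0=d_1+d_2>d_1$). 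One then applies Lemma~\ref{lem:puuzle-1} to see that the resulting bi-accessible point is eventually periodic, pushes it forward to a periodic bi-accessible point on the cycle of $U_0$ and back to $\partial U_0$, and concludes it is repelling as you did. If you keep your write-up, you must add the local connectivity step at the outset; without it the bi-accessibility step does not go through.
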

\begin{proof}
Since all the periodic point of $f_0$ are non-neutral and $f_0$ is not renormalizable at $c_0$, the Julia set $J_{f_0}$ of $f_0$ is locally connected (\cite[Theorem~1.1]{KS}). 
Then the lemma follows by the same method as in the proof of \cite[Lemma~3.2]{SW}.

\end{proof}

\begin{Lemma}\label{lem:period}Let $f\in \mathcal C_{d'}$ for some $d'\ge 2$ and let $U$ be a fixed Fatou component of $f$. If $p$ is a fixed point lying on $\partial U$ and $\mathcal R_f(t)$ lands at $p$, then $d't=t \mod 1$.
\end{Lemma}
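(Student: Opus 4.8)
The plan is to use the classical combinatorial structure of external rays landing at a periodic point. Since $p\in\partial U\subset J_f$, the point $p$ is a fixed point lying in the Julia set, and as an external ray lands at it, $p$ is repelling or parabolic (not attracting or Siegel, which are in the Fatou set, and not Cremer, since no external ray lands at a Cremer periodic point). In particular $f'(p)\neq0$ (a critical fixed point would be superattracting, hence in the Fatou set), so $f$ restricts to an orientation-preserving local homeomorphism near $p$ fixing $p$. By the standard finiteness theorem for rays landing at a repelling or parabolic periodic point, only finitely many external rays land at $p$; write them as $\mathcal R_f(t_1),\dots,\mathcal R_f(t_k)$ with $t_1=t$, indexed according to their cyclic order around $p$, and let $S_1,\dots,S_k$ be the open sectors into which their germs cut a small punctured neighbourhood of $p$, with $S_i$ lying between $\mathcal R_f(t_i)$ and $\mathcal R_f(t_{i+1})$ (indices mod $k$). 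Because $f(\mathcal R_f(s))=\mathcal R_f(d's)$ and $f(p)=p$, the map $s\mapsto d's\bmod 1$ carries $\{t_1,\dots,t_k\}$ bijectively onto itself (injectivity because $f$ is injective near $p$); since $f$ is an orientation-preserving local homeomorphism at $p$, this bijection preserves the cyclic order, hence is a cyclic rotation $t_i\mapsto t_{i+\ell}$ for some fixed $\ell\in\mathbb{Z}/k\mathbb{Z}$, and correspondingly the germ of $f$ at $p$ maps $S_i$ onto $S_{i+\ell}$.

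Next I would locate $U$ relative to these sectors. As $U$ is disjoint from $D_f(\infty)$ it meets none of the rays $\mathcal R_f(t_i)$, so a punctured neighbourhood of $p$ meets $U$ only inside $\bigcup_i S_i$; let $I\subset\mathbb{Z}/k\mathbb{Z}$ be the nonempty set of indices $i$ for which $U$ accumulates at $p$ from within $S_i$. Since $U$ is a fixed Fatou component, $f(U)=U$, and since the germ of $f$ sends $S_i$ to $S_{i+\ell}$ while carrying points near $p$ to points near $p$, we get $I+\ell=I$. I claim that $I$ consists of a single index, i.e.\ $U$ accumulates at $p$ from exactly one sector. Granting the claim, $I+\ell=I$ forces $\ell\equiv0\bmod k$, so the rotation is trivial: every $t_i$, and in particular $t=t_1$, is fixed by multiplication by $d'$, which is the assertion $d't=t\bmod1$.

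The main obstacle is the claim that $U$ is not ``pinched'' at $p$. I would prove it by planarity. Since $J_f$ is connected, $U$ is a bounded simply connected domain with $p\in\partial U\setminus U$. If $U$ accumulated at $p$ from two distinct sectors $S_a\neq S_b$, pick $x\in U\cap S_a$ and $y\in U\cap S_b$ close to $p$, join them by an arc $\delta\subset U$, and close $\delta$ up into a Jordan curve $\Lambda$ by adjoining a short arc from $x$ to $p$ inside $S_a\cup\{p\}$ and a short arc from $y$ to $p$ inside $S_b\cup\{p\}$, chosen short enough to meet nothing else. Near $p$ the curve $\Lambda$ has one prong in $S_a$ and one in $S_b$, so on the side of these prongs facing the bounded complementary component $B$ of $\Lambda$ there lies the germ of at least one ray $\mathcal R_f(t_c)$ (there is a ray of the list strictly between $S_a$ and $S_b$ on each of the two sides). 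But $\mathcal R_f(t_c)$ is disjoint from $\Lambda$ — it lies in $D_f(\infty)$, while $\delta\subset U\subset K_f$ and the two short arcs lie in $\overline{S_a}\cup\overline{S_b}$ near $p$ — so $\mathcal R_f(t_c)$ is entirely contained in $B$, which is impossible since $B$ is bounded and $\mathcal R_f(t_c)$ is unbounded. This contradiction proves the claim and completes the proof. (When $J_f$ is locally connected — the situation in which the lemma will actually be applied, $\partial U$ then being a Jordan curve — the single-sector statement is immediate, so this last paragraph can be shortcut in that case.)
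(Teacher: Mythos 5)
Your proof is correct and follows essentially the same route as the paper's: both arguments show that $m_{d'}$ acts on the finite, cyclically ordered set of angles landing at $p$ as a rotation, and that the $f$-invariance of $U$ forces the rotation to be trivial. The only real difference is in how the key ``$U$ accumulates at $p$ from a single sector'' fact is obtained --- the paper gets it for free because the connected set $U$ is disjoint from the union $\Gamma$ of the closed rays landing at $p$ and hence lies in a single component of $\C\setminus\Gamma$ (each such component meeting a small punctured neighbourhood of $p$ in exactly one local sector), whereas you re-derive it with a Jordan-curve separation argument, which is valid but longer than necessary (and needs the minor fix that the arc $\delta\subset U$ be rerouted so as to meet the two short arcs only at their endpoints).
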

\begin{proof} Let $\Theta$ denote the set of the arguments $\theta$ for which $\mathcal R_f(\theta)$ lands at $p$. Note that $m_{d'}$ maps $\Theta$ onto itself and preserves cyclic order, where $m_{d'}:\R/\Z\to \R/\Z$, $x\mapsto d'x \mod 1$. Thus all the arguments $\theta\in \Theta$ have the same period under $m_d'$. It suffices to show there exists $\theta_0 \in \Theta$ such that $m_{d'}(\theta_0)=\theta_0$.  If $\Theta$ is a singleton, then $d't=t\mod 1$ since both $t$ and $m_{d'}(t)$ belong to $\Theta$. Now we assume $\# \Theta\ge 2$. Let $\Gamma$ be the closure of the union of all the external rays with arguments $\theta \in \Theta$. Let $\Omega$ be the component of $\C\setminus \Gamma$ which contains $U$ and let $\Omega'$ be the component of $\C\setminus f^{-1}(\Gamma)$ which contains $U$. Then $\partial \Omega \subset \partial \Omega'$ and $f:\Omega' \to \Omega$ is a holomorphic proper map. It follows that $f$ must fix the boundary of $\Omega$. In other words, the external rays in $\partial V$ are fixed under $f$.

\end{proof}

\begin{Lemma}\label{lem:puzzle} Under the assumption in Theorem~\ref{thm:puzzle},  $f_0$ has a buried biaccessible repelling periodic point which does not lie in the orbit of the critical point $c_0$.
\end{Lemma}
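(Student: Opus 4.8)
The plan is to upgrade the biaccessible repelling periodic point on $\partial U_0$ provided by Lemma~\ref{lem:puzzle0} to a \emph{buried} one that moreover avoids the orbit of $c_0$. First I would record that the point $p$ furnished by Lemma~\ref{lem:puzzle0} cannot itself be assumed buried, since it lies on $\partial U_0$; so the idea is to take preimages. Let $p$ be biaccessible, repelling and periodic on $\partial U_0$, say of period $q$, and consider the grand orbit $\bigcup_{n\ge 0} f_0^{-n}(p)$. Since $f_0$ is non-renormalizable at $c_0$ and has no neutral cycles, $J_{f_0}$ is locally connected by \cite[Theorem~1.1]{KS}, so every external ray lands and Thurston rigidity / the combinatorics of $m_{d_1+d_2}$ is fully available; in particular the set of angles $\Theta_p$ landing at $p$ is finite and $m_{d_1+d_2}$-cyclically-order-preserving on it (as used in the proof of Lemma~\ref{lem:period}).

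The key step is the following. Because $f_0$ is \emph{primitive} and has a periodic critical point $0$ whose Fatou cycle consists of Jordan-curve components with pairwise disjoint closures, the set of bounded Fatou components and their boundaries is ``thin'': only finitely many components up to the finite critical cycle, and each $\partial U$ is a Jordan curve. I would argue that among the iterated preimages of $p$, only finitely many can lie on the boundary of a bounded Fatou component — indeed, if $f_0^{-n}(p)$ meets $\partial U$ for a bounded Fatou component $U$, then $p = f_0^n(\partial U \cap f_0^{-n}(p))$ forces $f_0^n(U)$ to be the Fatou component $U_0$ (or a component in its cycle), and for a given Fatou component there are only finitely many such $n$ because the cycle is finite and $f_0^n|_U$ is a branched cover of bounded degree; combined with primitivity (boundaries are disjoint Jordan curves, so a preimage point lies on at most one $\partial U$) this bounds the ``non-buried'' preimages. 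Hence all but finitely many points of $\bigcup_n f_0^{-n}(p)$ are buried. Each such preimage $w$ is still biaccessible (pull back two rays landing at $p$ along a branch), and is preperiodic; since the grand orbit of $p$ is infinite (as $p$ is repelling, hence not exceptional), it contains infinitely many \emph{periodic} buried biaccessible points — here I would use that $p$ itself is periodic, so $\bigcup_n f_0^{-n}(p)$ contains the full backward cyclic tree whose periodic part is $\mathrm{orb}(p)$ only, but a single appropriately chosen preimage that maps back onto the cycle after one more step is again periodic; more cleanly: take $w$ a buried preimage and note that $w$ is strictly preperiodic unless $w\in\mathrm{orb}(p)$, so instead I pass to a periodic point obtained by a separate argument — see next paragraph.

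A cleaner route for the periodicity is: by Lemma~\ref{lem:puzzle0} and local connectivity, there are (at least) two rays landing at $p$, so $p$ separates the plane; pushing this separation around the cycle $\mathrm{orb}(p)$ and then pulling back by one more iterate produces a repelling periodic point $\tau_0$ of some period that is biaccessible and does \emph{not} lie on any $\partial U$ — concretely, one chooses $\tau_0$ to be a periodic point in the Julia set cut out by the puzzle combinatorics of $p$ that is ``deep'' relative to the Fatou cycle; local connectivity guarantees such periodic points are dense enough in the relevant pieces. To also keep $\tau_0$ off the orbit of $c_0$: the orbit of $c_0$ is not periodic (non-renormalizability rules out $c_0$ being periodic or preperiodic to a cycle carrying $\tau_0$ — if it were, that cycle would be neutral or would give a renormalization, contradiction), and $\tau_0$ is periodic, so if $\tau_0\in\mathrm{orb}(c_0)$ then $c_0$ would be preperiodic with $\tau_0$ in its $\omega$-limit, whence $\mathrm{orb}(\tau_0)\subset\mathrm{orb}(c_0)$ is finite — this is consistent only if $c_0$ itself is eventually periodic onto $\mathrm{orb}(\tau_0)$, which again contradicts non-renormalizability at $c_0$ combined with the absence of neutral cycles (a Julia critical point eventually landing on a repelling cycle is fine in general, but then $f_0$ would be renormalizable at $c_0$ in the sense of the trivial renormalization only if the cycle is the whole puzzle — here one invokes the non-renormalizability hypothesis to exclude it).

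I expect the \textbf{main obstacle} to be this last point — simultaneously arranging that $\tau_0$ is buried \emph{and} periodic \emph{and} disjoint from $\mathrm{orb}(c_0)$ — because the naive ``take a preimage of $p$'' only gives preperiodic points, and naive ``stay periodic'' only gives points on $\partial U_0$. The resolution I would pursue is to work with the Yoccoz puzzle cut out by $\Theta_p$ from the start: primitivity plus local connectivity imply that the puzzle piece containing $U_0$ shrinks (this is essentially what Theorem~\ref{thm:puzzle}(2) will assert), so deep pieces disjoint from the Fatou cycle exist, and by the standard argument that puzzle pieces eventually become ``nice'' one finds inside them a repelling periodic point with at least two rays landing; such a point is automatically buried and, by a counting/finiteness argument against the finite orbit of $c_0$, can be chosen off $\mathrm{orb}(c_0)$. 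The hypotheses (no neutral cycles, non-renormalizable at $c_0$, primitive) are exactly what make each of these three requirements simultaneously satisfiable, and the bookkeeping of how $\Theta_p$ and its pullbacks interact with $\partial U_0$ and $\mathrm{orb}(c_0)$ is where the real work lies.
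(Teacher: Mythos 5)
There is a genuine gap: your proposal correctly isolates the difficulty (a point that is simultaneously periodic, buried, biaccessible, and off $\mathrm{orb}(c_0)$) but never supplies the mechanism that actually produces such a point. Taking preimages of the point $p\in\partial U_0$ from Lemma~\ref{lem:puzzle0} only yields strictly preperiodic points, as you notice, and the substitute you offer — ``a periodic point in the Julia set cut out by the puzzle combinatorics of $p$ that is deep relative to the Fatou cycle; local connectivity guarantees such periodic points are dense enough'' — does not establish biaccessibility (density of periodic points says nothing about two rays landing at the same one) nor buriedness. The paper's argument is a pigeonhole count that your sketch is missing entirely: normalizing so that $U_0$ is fixed and letting $\ell$ be the period of $p$, Lemma~\ref{lem:period} forces the rays at $p$ to be $f_0^\ell$-fixed; $f_0^\ell$ has $d^\ell-1$ fixed points besides $0$, all repelling, while at least two of the $d^\ell-1$ fixed rays are consumed by $p$, so some repelling fixed point $\widehat\alpha$ receives no fixed ray. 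Its rays therefore have higher period and come in a cycle of length at least two, so $\widehat\alpha$ is biaccessible; Lemma~\ref{lem:period} then rules $\widehat\alpha$ off $\partial U_0$, and primitivity (disjoint closures of Fatou components, all of which are preimages of $U_0$) rules it off every other Fatou boundary, so it is buried.

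Your treatment of the constraint $\tau_0\notin\mathrm{orb}(c_0)$ is also incorrect on two counts. First, you invoke ``a counting/finiteness argument against the finite orbit of $c_0$,'' but $\mathrm{orb}(c_0)$ is not finite in general: $f_0$ is non-hyperbolic and $c_0$ is a free Julia critical point. Second, you claim that $c_0$ landing on a repelling cycle would contradict non-renormalizability at $c_0$; it would not — a Julia critical point can be preperiodic to a repelling cycle (Misiurewicz-type behaviour) with no renormalization in sight, and the hypotheses of Theorem~\ref{thm:puzzle} do not exclude this. The correct fix, which is what the paper does, is a second pigeonhole: if the buried point $\widehat\alpha$ found above happens to lie in $\mathrm{orb}(c_0)$, then the periodic part of $\mathrm{orb}(c_0)$ is the cycle of $\widehat\alpha$, of period at most $\ell$; letting $\ell'>\ell$ be the ray period of $\widehat\alpha$, one compares the number of external rays of period $\ell'$ (at least two of which land at $\widehat\alpha$, which is not of period $\ell'$) with the number of repelling points of period $\ell'$ to find a second buried biaccessible periodic point $\widehat\alpha'$ of exact period $\ell'$, which cannot lie in $\mathrm{orb}(c_0)$ precisely because $\ell'$ exceeds the period of the cycle that $c_0$ eventually hits.
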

\begin{proof} 
The proof is based on the pigeonhole principle. 
By Lemma~\ref{lem:puzzle0}, there is a bi-accessible repelling periodic point $p$ on $\partial U_0$.  Since $f_0$ is primitive, the period of $p$ must be a multiple of the period of $U_0$. Without loss of generality, we assume $U_0$ is fixed by $f_0$. By Lemma~\ref{lem:period}, the ray period of $p$ is the same as the period of $p$. In other words, if the period of $p$ is $\ell$, then each external ray landing at $p$ has external angle $\theta$ with $d^{\ell}\theta=\theta \mod 1$, where $d=d_1+d_2$.

Note that the polynomial $f^\ell_0$ has $d^\ell$ fixed point and $0$ is its unique attracting fixed point. As  $f_0$ has no neutral periodic points, we know all the fixed points of $f^{\ell}_0$ are repelling except $0$.  Since $p$ is bi-accessible, by Lemma~\ref{lem:period},  there are at least two $f^\ell_0$-fixed external  rays landing at $p$. Thus there are at most $(d^{\ell}-3)$ $f^\ell_0$-fixed external rays landing at other  $d^{\ell}-2$  repelling fixed points. Since each $f^\ell_0$-fixed external ray must land at a repelling $f^{\ell}_0$-fixed point, there exists a repelling fixed point $\widehat\alpha$ of $f^\ell_0$ at which there are no $f^\ell_0$-fixed external rays landing. Thus, there are at least two external rays (which are not $f^\ell_0$-fixed) landing at $\widehat\alpha$.  It follows $\widehat\alpha\notin \partial U_0$  from Lemma~\ref{lem:period}. Since $f_0$ is primitive, and so is $f^{\ell}_0$, $\widehat\alpha$  cannot lie on the boundary of any preimage of $U_0$ under $f^{\ell}_0$. Note that all the Fatou component of $f^{\ell}_0$ is a preimage of $U_0$ under $f^{\ell}_0$. We conclude that  $\widehat\alpha$ is a buried repelling fixed point of $f^\ell_0$. If $\widehat\alpha$ does not lie in the orbit of $c_0$, then we are done.\par
Now assume $\widehat\alpha \in \mathrm{orb}(c_0)$.  Let $\ell'>\ell$ denote the ray period of $\widehat\alpha$.  Let $P(\ell')$ denote the set of all the repelling periodic points with period $\ell'$.  For any $z\in P(\ell')$, there is at least one external ray with period divisible by $\ell'$ landing at $z$.  As there are at least two external rays of period $\ell'$ landing at $\widehat\alpha \notin P(\ell')$, there are at most $d^{\ell'}-d^{\ell'-1}-1$ external rays with period $\ell'$ possible to land on $P(\ell')$.  Since $\# P(\ell')=d^{\ell'}-d^{\ell'-1}$, there exists an $\widehat\alpha' \in P(\ell')$ such that there are no $f^{\ell'}_0$-fixed external rays landing at $\widehat\alpha'$.  So $\widehat\alpha'$ is a biaccessible repelling $f^{\ell'}_0$-fixed point. Again by Lemma~\ref{lem:period} and the primitive property of $f_0$, we can conclude that $\widehat\alpha'$ is buried. 
Since $\widehat\alpha \in \mathrm{orb}(c_0)$ and $\ell'>\ell$, $\widehat\alpha'$ cannot lie in the critical orbit $\mathrm{orb}(c_0)$. 

\end{proof}

\newcommand{\Crit}{\text{Crit}}
\begin{proof}[Proof of (1) and (2) of Theorem~\ref{thm:puzzle}]
Recall that $m_0$ is the period of $0$.
By Lemma~\ref{lem:puzzle}, $f_0$ has a buried biaccessible repelling periodic point $\xi_1$ such that $\xi_1 \notin \mathrm{orb}(c_0)$. We use $Z_1=\mathrm{orb}(\xi_1)$ to make Yoccoz puzzle. Let $s_1\le m_0$ be the smallest positive integer  such that $f^{s_1}_0(0 ) \in \bigcap_{n=0}^{\infty} Y^{(n)}_{Z_1}$. Then there exist positive integers $q_1$ and $u$ such that $f^{s_1}_0:Y^{(n+s_1)}_{Z_1} \to Y^{(n)}_{Z_1}$ is a holomorphic proper map of  degree $u$ for all $n\ge q_1$. We claim $u=d_1$. Indeed, if $u>d_1$ then there exists $0\le t_1<s_1$ such that $c_0 \in f^{t_1}_0(\bigcap_{n=0}^{\infty} Y^{(n)}_{Z_1})$. This is impossible since $f_0$ is non-renormalizable at $c_0$.

By the thickening technique (\cite{Mil4}), $f^{s_1}_0:Y^{(q_1+s_1)}_{Z_1} \to Y^{(q_1)}_{Z_1}$ can extend to a polynomial-like map $g_1$ with filled Julia 
set $K(g_1)=\bigcap_{n=0}^{\infty} Y^{(n)}_{Z_1}$. By Douady-Hubbard Straightening Theorem, $g_1$ is hybrid to a primitive  postcritically finite hyperbolic polynomial $Q_1$ of degree $d_1$. If $s_1=m_0$, then $Q_1=z^{d_1}$ and so $\bigcap_{n=0}^{\infty} Y^{(n)}_{Z_1}= \overline{U_0}$.  Then we take $\tau_0=\xi_1$ and we are done. If $s_1\ne m_0$, then $Q_1\ne z^{d_1}$. It follows \cite[Lemma~3.3]{SW} that $Q_1$ has a buried biaccessible repelling periodic point $\zeta_1$. Let $h_1$ denote the hybrid conjugacy between $g_1$ and $Q_1$. Then $\xi_2=h^{-1}_1(\zeta_1)$ is accessible along at least two curves (the preimage of two external rays of $Q_1$ under $h^{-1}_1$)  which are not homotopy equivalent relative to $\C\setminus K(f_0)$. By \cite[Theorem~2]{LP},  there are at least two $f_0$-external rays landing at $\xi_2$. So $\xi_2$ is a buried biaccessible repelling periodic point of $f_0$. By using a similar argument in the proof of Lemma~\ref{lem:puzzle}, we can further assume $\xi_2\notin \mathrm{orb}(c_0)$. 

Now  let $Z_2=\mathrm{orb}(\xi_2)$ and construct Yoccoz puzzle. Let $s_2$ be the smallest positive integer such that $f^{s_2}_0(0) \in \bigcap_{n=0}^{\infty} Y^{(n)}_{Z_2}$. Note that $s_2>s_1$. Indeed, the external rays landing at $\xi$ separate $0$ from $f_0(\bigcap_{n=0}^{\infty} Y^{(n)}_{Z_1}),\ldots, f_0^{s_1-1}(\bigcap_{n=0}^{\infty} Y^{(n)}_{Z_1})$ and $f^{s_1}_0(0)$. 

Inductively, we can obtain a strictly increasing sequence $\{s_k\}$ and a sequence $\{Z_k=\mathrm{orb}(\xi_k)\}$ with the following properties:

\begin{itemize}
\item $\xi_k$ is a buried biaccessible repelling periodic point of $f_0$ which lies outside the orbit of $c_0$;
\item $s_k\le m_0$ is the smallest positive integer  such that $f^{s_k}_0(0) \in \bigcap_{n=0}^{\infty} Y^{(n)}_{Z_k}$. 
\end{itemize}

Since $s_k$ is a strictly increasing sequence of positive integers bounded by $m_0$, there is a first moment $k_0$ such that $s_k=m_0$. Take $\tau_0=\xi_k$ and we are done. Indeed, $f^{m_0}_0: Y^{(m_0+n)}_{Z_k} \to Y^{(n)}_{Z_k}$ has a polynomial-like extension which is hybrid to $z\mapsto z^{d_1}$ for all sufficiently large $n$. Thus the filled Julia set of $f^{m_0}_0|_{Y^{(m_0+n)}}$ must equal to $\overline{U_0}$.

\end{proof}

From now on, we will fix an $f_0$-admissible set $Z_0=\mathrm{orb}({\tau_0})$ which is given by the above proof. For each $n\in \mathbb N$, let $Y^{(n)}$ and $X^{(n)}$ denote the critical puzzle pieces $Y^{(n)}_{Z_0}(0)$ and $Y^{(n)}_{Z_0}(c_0)$ respectively.  We complete the proof of Theorem~\ref{thm:puzzle} by showing that $X^{(n)}$ shrinks to a point:

\begin{Lemma}\label{lem:X shrinking} $\bigcap_{n\in \mathbb N} X^{(n)}=\{c_0\}$.
\end{Lemma}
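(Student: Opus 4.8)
The plan is to argue by contradiction: suppose $K_\infty := \bigcap_{n\in\mathbb N} X^{(n)}$ contains more than one point. Since each $X^{(n)}$ is a puzzle piece of depth $n$ containing $c_0$, the nest $\{X^{(n)}\}$ is decreasing with $\overline{X^{(n+1)}}\subset X^{(n)}$ eventually (using that $c_0$ is not on the boundary of any puzzle piece, which is exactly why we imposed that $Z_0$ avoids $\mathrm{orb}(c_0)$), and $K_\infty$ is a connected compact set. First I would record the combinatorial bookkeeping: because $c_0$ is a Julia critical point, $c_0\notin U_0$ and hence the orbit of $c_0$ never enters the superattracting cycle; moreover, since part (2) of Theorem~\ref{thm:puzzle} gives $\bigcap_n Y^{(n)} = \overline{U_0}$, the critical value orbit of $0$ is already "trapped" and does not interfere. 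So along the forward orbit of $c_0$, the only critical point that can ever appear inside a puzzle piece of the nest above $c_0$ is $c_0$ itself.

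The heart of the argument is a renormalization-type dichotomy for the critical nest $\{X^{(n)}\}$, following the standard Yoccoz-puzzle analysis. Either (i) $c_0$ is \emph{combinatorially non-recurrent} with respect to this puzzle — meaning the orbit of $c_0$ eventually stays outside $X^{(n_0)}$ for some $n_0$ — in which case a routine Koebe/shrinking argument (the pullbacks of a fixed puzzle piece along the orbit of $c_0$ have uniformly bounded degree, in fact degree $d_2+1$ only finitely often, and the moduli of the annuli $X^{(n)}\setminus \overline{X^{(n+1)}}$ sum to infinity) forces $\operatorname{diam} X^{(n)}\to 0$; or (ii) $c_0$ is combinatorially recurrent, and one runs the Kahn–Lyubich / principal-nest machinery: build the successive "first return" maps to a critical puzzle piece around $c_0$. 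The non-renormalizability of $f_0$ at $c_0$ is precisely the hypothesis that prevents this principal nest from stabilizing to a polynomial-like map onto itself; hence some return map is \emph{non-central} (the critical value of a first-return branch escapes the child piece), and the classical Yoccoz argument then yields a definite gain of modulus infinitely often, so again $\operatorname{diam} X^{(n)}\to 0$. Either way $K_\infty$ is a single point, which must be $c_0$.

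I would organize the write-up as: (a) reduce to showing $\operatorname{diam} X^{(n)}\to 0$; (b) state the degree bound for pullbacks of puzzle pieces along $\operatorname{orb}(c_0)$, noting that degree $>1$ (namely $d_2+1$) occurs only when the piece actually contains $c_0$; (c) handle the non-recurrent case by telescoping moduli with Koebe; (d) in the recurrent case, construct the principal nest of critical puzzle pieces $X^{(n_0)}\supset X^{(n_1)}\supset\cdots$ via first return maps, invoke non-renormalizability at $c_0$ to produce infinitely many non-central returns, and apply the modulus-growth estimate (this is where \cite{KS} and the local connectivity input already used in Lemma~\ref{lem:puzzle0} feed in, or one can cite the corresponding step of \cite[Theorem~3.1]{SW} since the argument is identical once $d_2$ is carried along as a passive multiplicity). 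The main obstacle is the recurrent case: one must carefully track that the extra multiplicity $d_2$ at $c_0$ does not destroy the bounded-degree property of the return maps — but since $f_0$ is non-renormalizable at $c_0$ and $0$'s orbit is confined to $\overline{U_0}$, each first-return branch to a piece around $c_0$ has degree exactly $d_2+1$, so the Koebe-type distortion bounds and the Grötzsch-inequality modulus estimates go through verbatim, and the conclusion follows as in the non-hyperbolic Yoccoz puzzle theory.
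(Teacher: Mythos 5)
Your proposal follows essentially the same route as the paper: split into the combinatorially non-recurrent case (handled by the standard Milnor shrinking argument, \cite[Lemma~1.8, Theorem~1.9]{Mil4}) and the combinatorially recurrent case, where one builds the principal nest of first-return pieces around $c_0$, uses non-renormalizability at $c_0$ to obtain a non-degenerate return $P_{s_0+1}\Subset P_{s_0}$, and then observes that the resulting first-return map is a non-renormalizable generalized polynomial-like map with unique critical point $c_0$, so that the nest shrinks by \cite{KL} and \cite[Theorem~1.5]{KS}. This is exactly the paper's Lemma~\ref{lem:non-degenerated} followed by the quoted shrinking results.

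One caveat on your justification of the recurrent case: the claims that ``the classical Yoccoz argument then yields a definite gain of modulus infinitely often'' and that ``the Gr\"{o}tzsch-inequality modulus estimates go through verbatim'' are not correct when $d_2\ge 2$. For a critical point of local degree $d_2+1\ge 3$ the Yoccoz telescoping of moduli along the principal nest fails --- this is precisely the obstruction that the Kahn--Lyubich covering lemma was invented to overcome --- so the divergence of moduli cannot be obtained by the classical estimate; it is the main content of \cite{KL} (and of \cite{KS} in the non-unicritical setting). Since you also defer to the Kahn--Lyubich machinery and to \cite{KS} for this step, the argument is rescued by the citation rather than by the sketched estimate, which is in fact all the paper does: it constructs the box mapping and quotes \cite{KL} and \cite[Theorem~1.5]{KS}. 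The write-up is fine provided the ``classical Yoccoz'' phrasing is replaced by an appeal to those a priori bounds.
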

We say $c_0$ is {\em combinatorially recurrent} if for any $n\in\mathbb N$, there exists $j\ge1$ such that $f^j_0(c_0) \in X^{(n)}$.
The proof for the non-combinatorially recurrent case is simple, and is treated in the same way as in the quadratic case. See ~\cite[Lemma~1.8,Theorem~1.9]{Mil4} for an example. To deal with the combinatorially recurrent case, we need the following definition.
\begin{definition}
 A generalized polynomial-like map is a holomorphic proper map $g:U\to V$ with the following properties:
\begin{itemize}
\item $V$ is a quasidisk;
\item each component of $U$ is a quasidisk contained compactly in $V$;
\item $g$ has finitely many critical points.
\end{itemize}
\end{definition}
For more details for generalized polynomial-like map, we refer the readers to ~\cite{KL}.
Now we use the Yoccoz puzzle induced by $Z_0$ to associate a generalized polynomial-like map to $f_0$.

\begin{definition} An {\em APL } map $g:U\to V$ is a holomorphic proper map which satisfies the following :
$U\subset V$ and $K(g):=\bigcap_n g^{-n}(U) $ is compactly contained in  $U$.
\end{definition}
 It follows from \cite[Lemma~2.4]{LY} that every APL map has a polynomial-like restriction with the filled Julia set  equal to $K(g)$.
 
\begin{Lemma}\label{lem:non-degenerated} If $c_0$ is combinatorially recurrent, then there exists a positive integer $\nu_0$ such that the component of the domain of the first return map to $X^{(\nu_0)}$ which contains $c_0$ is compactly contained $X^{(\nu_0)}$.
\end{Lemma}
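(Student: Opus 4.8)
The plan is to argue by contradiction, exploiting that the connected components of first-return domains are themselves critical puzzle pieces, so that any failure of compact containment is forced to occur along a \emph{vertex} of the puzzle (a point of $\bigcup_m f_0^{-m}(Z_0)$); a persistent failure then traps a periodic point on the boundary of every sufficiently deep critical puzzle piece, which is incompatible with $\mathrm{orb}(c_0)\cap Z_0=\varnothing$. First I would reduce to the right statement. Since $c_0$ is combinatorially recurrent, for every $n$ the forward orbit of $c_0$ meets $X^{(n)}$; let $k_n\ge 1$ be the first return time. A routine check with nested puzzle pieces shows that the component of the domain of the first return map to $X^{(n)}$ containing $c_0$ is exactly the puzzle piece $X^{(n+k_n)}$, and that $f_0^{k_n}\colon X^{(n+k_n)}\to X^{(n)}$ is proper of degree $d_2+1$ (here one uses that $c_0$ is the only critical point of $f_0$ in $J_{f_0}$ and that no critical value lies on any $\Gamma^{Z_0}_m$ by admissibility of $Z_0$). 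One also gets $k_n\to\infty$: otherwise $f_0^{j}(c_0)\in X^{(n)}$ for some fixed $j\ge 1$ and all large $n$, so for all large $n$ the map $f_0^{j}\colon X^{(n+j)}\to X^{(n)}$ would be proper of degree $d_2+1$ and fix the critical piece, and the thickening technique would give a polynomial-like restriction of $f_0^{j}$ around $c_0$ with connected Julia set, i.e.\ a renormalization of $f_0$ at $c_0$, contrary to hypothesis. Thus it suffices to find $\nu_0$ with $X^{(\nu_0+k_{\nu_0})}\Subset X^{(\nu_0)}$.

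The combinatorial heart is the following. If $X^{(m)}\subseteq X^{(n)}$ with $m>n$ but $X^{(m)}\not\Subset X^{(n)}$, then $\overline{X^{(m)}}$ meets $\partial X^{(n)}$; since $\Gamma^{Z_0}_n\subset\Gamma^{Z_0}_m$, every point $\beta$ of this intersection is a depth-$n$ vertex (so $f_0^{n}(\beta)\in Z_0$), and moreover $\beta\in\partial X^{(j)}$ for all $n\le j\le m$. Now suppose, for contradiction, that $X^{(n+k_n)}\not\Subset X^{(n)}$ for all $n$, and run the principal nest $n_0<n_1<\cdots$ with $n_{i+1}=n_i+k_{n_i}$, choosing $n_0\ge 1$ with $f_0^{n_0}(c_0)\in X^{(0)}$ (possible by combinatorial recurrence); this makes $f_0^{n_i}\colon X^{(n_i)}\to X^{(0)}$ proper for every $i$. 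For each $i$ we obtain a depth-$n_i$ vertex $\beta_i\in\partial X^{(n_i)}\cap\partial X^{(n_{i+1})}$, with $\beta_i\in\partial X^{(j)}$ for every $n_i\le j\le n_{i+1}$, so that every boundary $\partial X^{(m)}$ ($m\ge n_0$) carries such a trapped vertex and every $f_0^{n_i}(\beta_i)$ lies in the finite set $Z_0\cap\partial X^{(0)}$.

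Since $X^{(0)}$ has only finitely many vertices and accesses, there are only finitely many combinatorial types of ``a critical puzzle piece with a marked boundary access''; applying the pigeonhole principle along the principal nest and passing to a subsequence, I would extract a single point $\beta^{\ast}$, eventually landing on the cycle $Z_0$, lying on $\partial X^{(m)}$ for all sufficiently large $m$. Replacing $\beta^{\ast}$ by $f_0^{\ell}(\beta^{\ast})\in Z_0$ and $c_0$ by $z_0:=f_0^{\ell}(c_0)$ (and using that $f_0^{\ell}$ maps $\overline{X^{(m+\ell)}}$ onto the closure of the depth-$m$ piece containing $z_0$), we may assume $\beta^{\ast}\in Z_0$ and that $\beta^{\ast}$ lies on the closure of the depth-$m$ piece $W^{(m)}\ni z_0$ for all large $m$. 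Because $\beta^{\ast}\in Z_0$ is a repelling periodic point with at least two rays landing and lies in $Z_0$, no new ray or puzzle arc appears at $\beta^{\ast}$ at deeper levels, so for each $m$ there is exactly one depth-$m$ puzzle piece adjacent to $\beta^{\ast}$ on the side of $z_0$; since $f_0$ is expanding at $\beta^{\ast}$ these pieces shrink to $\{\beta^{\ast}\}$. Hence $W^{(m)}$ is that piece, and $z_0\in W^{(m)}$ for all large $m$ forces $z_0=\beta^{\ast}\in Z_0$, contradicting $\mathrm{orb}(c_0)\cap Z_0=\varnothing$. This contradiction produces the desired $\nu_0$; the non-combinatorially-recurrent case required elsewhere is handled by the standard reasoning, cf.\ \cite[Lemma~1.8, Theorem~1.9]{Mil4}.

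I expect the pigeonhole step to be the main obstacle: one must set up the finite list of combinatorial types carefully — tracking \emph{which} pair of rays of $\Gamma^{Z_0}_{n_i}$ at $\beta_i$ actually pinches $X^{(n_{i+1})}$ off from the rest of $X^{(n_i)}$, and keeping the associated combinatorial delay bounded along the principal nest — so that finiteness can legitimately be invoked. The reduction in the first paragraph, the vertex nature of the touching point, and the expansivity argument closing the loop are routine given Theorem~\ref{thm:puzzle} and the standing hypotheses that $f_0$ has no neutral cycles and is non-renormalizable at $c_0$.
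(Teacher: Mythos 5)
The reduction in your first paragraph (the critical return component is $X^{(n+k_n)}$, touching forces a depth-$n$ vertex $\beta_i\in\partial X^{(n_i)}\cap\partial X^{(n_{i+1})}$) is sound and matches the setup in the paper, which builds exactly this principal nest $P_0=X^{(\kappa_0)}$, $P_{i+1}=$ critical return component of $P_i$, and then simply cites the proof of \cite[Lemma~2.2]{KS} for the existence of a first non-degenerate level. You are therefore attempting to reprove the cited result, and the attempt has two genuine gaps. First, the pigeonhole step does not do what you need. The $\beta_i$ are pairwise distinct points of strictly increasing depth; knowing that infinitely many of them have the same ``combinatorial type'' (same image in $Z_0$, same marked access) in no way forces them to coincide, so you cannot extract a single $\beta^{\ast}$ lying on $\partial X^{(m)}$ for all large $m$ by counting types. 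The mechanism that actually produces a persistent touching point is dynamical, not static: one must show that the return map $R_i\colon P_{i+1}\to P_i$ carries the touching vertex of level $i+1$ to a touching vertex of level $i$, that the return times consequently stabilize, and that this forces an eventually periodic (hence eventually fixed along a subsequence) touching point. None of this is in your write-up, and you flag it yourself as the unresolved obstacle.

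Second, even granting $\beta^{\ast}$, the closing step is wrong: ``since $f_0$ is expanding at $\beta^{\ast}$ these pieces shrink to $\{\beta^{\ast}\}$'' fails precisely in the situation you are in. The depth-$m$ piece $W^{(m)}$ adjacent to $\beta^{\ast}$ on the side of $z_0$ contains a point of the critical orbit, and under the standing contradiction hypothesis the critical piece itself is attached at $\beta^{\ast}$; hence the pullback $f_0^{q}\colon W^{(m+q)}\to W^{(m)}$ has degree $>1$ and expansion at the fixed point gives no control on $\mathrm{diam}\,W^{(m)}$. (If your claim were true, satellite renormalizations could never exist.) Note also that the shrinking of pieces attached to vertices is Proposition~\ref{prop:noncrpiece}, which in this paper is proved \emph{after} and \emph{from} the present lemma, so invoking it here would be circular. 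The correct conclusion of the argument is not $z_0=\beta^{\ast}\in Z_0$; it is that the persistent touching at a periodic point makes $f_0^{q}\colon W^{(m+q)}\to W^{(m)}$ (after thickening) a polynomial-like map with connected Julia set around $c_0$, contradicting the hypothesis that $f_0$ is non-renormalizable at $c_0$. That hypothesis is the essential input and must enter at this final step; in your proposal it is only used in the side remark that $k_n\to\infty$, which the rest of the argument never uses.
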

\begin{proof} By the proof of (1) and (2) of Theorem~\ref{thm:puzzle}, there exists $\kappa_0>0$ large enough so that the following holds:
 \begin{itemize}
 \item $\overline {Y^{(\kappa_0)}},\overline{f_0(Y^{(\kappa_0)})},\ldots, \overline{f^{m_0-1}_0(Y^{(\kappa_0)})}$ are disjoint;
 \item $f^{m_0}_0:Y^{(\kappa_0)} \to Y^{(\kappa_0-m_0)}$ is an APL map with filled Julia set $\overline{U_0}$.
  \end{itemize}
  Let $P_0=X^{(\kappa_0)}$. For $n\ge 1$, we define inductively $P_{n}$ as the component of the domain of first return map to $P_{n-1}$ which contains $c_0$. For all $n\in\mathbb N$, the first return map $R_{n}|_{P_{n+1}}:P_{n+1} \to P_{n}$ is $d_2$-to-$1$. Note that there exists a first moment $s_0$ such that $P_{s_0+1} \Subset P_{s_0}$. It is well known since $f_0$ is non-renormalizable at $c_0$, see the proof of ~\cite[Lemma~2.2]{KS} for an example. Let $X^{(\nu_0)}=P_{s_0}$ and we are done.
\end{proof}

\begin{proof}[Proof of Lemma~\ref{lem:X shrinking}] 
Assume $c_0$ is combinatorially recurrent  and let $X^{(\nu_0)}=P_{s_0}$,  $P_{n}~(n\in\mathbb N)$ be as in the proof Lemma~\ref{lem:non-degenerated}. It suffices to show $P_n$ shrinks to $c_0$.  Let $V=P_{s_0+1}$ and let $R:U\to V$ be the first return map to $V$ under $f_0$. Note that $R:U\to V$ is a generalized polynomial map with a unique critical point $c_0$.  Since $f_0$ is non-renormalizable at $-c_0$,  so is $R:U\to V$. This implies the diameter of the puzzle piece $X^{(n)}$ shrinks to $0$ and $J_{f_0}$ is locally connected at $c_0$, see~\cite{KL} and \cite[Theorem~1.5]{KS}.
\end{proof}

Every puzzle piece which attaches $Z_0$ also shrinks to a point:
\begin{Proposition}\label{prop:noncrpiece}Let $z\in \bigcup_{j=0}^{\infty}f_0^{-j}(Z_0)$. Then
$$\sup\{\mathrm{diam}(Y)\mid Y \text{ is a puzzle piece of depth }n ~\text{such that}~z\in \overline{Y}\}\to 0\text{ as } n\to\infty.$$
\end{Proposition}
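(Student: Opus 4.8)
The plan is to reduce the statement to showing that $\bigcap_{n\ge 0}\mathcal Y_n(z)=\{z\}$, where $\mathcal Y_n(z)$ denotes the union of the closures of the depth-$n$ puzzle pieces whose closure contains $z$; this is a decreasing sequence of continua and the supremum in the statement is $\le\mathrm{diam}\,\mathcal Y_n(z)$. Before starting I would record three soft facts about a point $z\in\bigcup_{j\ge 0}f_0^{-j}(Z_0)$. First, $z$ is buried: if some $f_0^j(z)$ lay on the boundary of a bounded Fatou component then so would $z$, but $Z_0=\mathrm{orb}(\tau_0)$ is buried and being buried is preserved under preimages. Second, $\mathrm{orb}(z)$ avoids $\Crit(f_0)=\{0,c_0\}$: it cannot meet $0\in U_0$ since $\mathrm{orb}(z)\subset J_{f_0}$, and if it met $c_0$ then some $f_0^k(c_0)$ would be a periodic point of $Z_0$ lying in $\mathrm{orb}(c_0)$, against $f_0$-admissibility. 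Third, for the same reason no $f_0^k(c_0)$ ($k\ge 0$) lies in $\bigcup_jf_0^{-j}(Z_0)$, so each $f_0^k(c_0)$ lies in the interior of a unique puzzle piece of any given depth.

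Set $K:=\bigcap_n\mathcal Y_n(z)$, a continuum containing $z$. I would then establish in turn: (i) $K\cap\Crit(f_0)=\emptyset$: if $0\in K$ then $z\in\overline{Y^{(n)}}$ for all large $n$, whence $z\in\overline{U_0}$ by Theorem~\ref{thm:puzzle}(2), contradicting that $z$ is buried; if $c_0\in K$ then $z\in\bigcap_n\overline{X^{(n)}}=\{c_0\}$ by Lemma~\ref{lem:X shrinking} (which yields $\mathrm{diam}\,X^{(n)}\to0$), contradicting $c_0\notin\bigcup_jf_0^{-j}(Z_0)$. (ii) $K\subset J_{f_0}$: since $J_{f_0}$ has empty interior, if $\mathrm{int}\,K\neq\emptyset$ then $K$ meets a bounded Fatou component $V$ at an interior point; as $\Gamma_n$ misses $V$, $V$ lies in the interior of a unique depth-$n$ piece, hence $V\subset K$; but $f_0^\ell(V)=U_0$ for some $\ell$, and applying $f_0^\ell$ to the pieces at $z$ forces $f_0^\ell(z)\in\overline{U_0}$, again impossible. (iii) $K$ is disjoint from $\{f_0^k(0),f_0^k(c_0):k\ge 0\}$: the $f_0^k(0)$ lie in Fatou components, hence off $J_{f_0}\supset K$; and since $f_0^k(c_0)$ is not a vertex it lies in the interior of a unique depth-$n$ piece $X^{(n)}_k$, and $X^{(n)}_k=f_0^k(X^{(n+k)})$, so $\bigcap_n\overline{X^{(n)}_k}=f_0^k\big(\bigcap_m\overline{X^{(m)}}\big)=\{f_0^k(c_0)\}$ by Lemma~\ref{lem:X shrinking}; thus $f_0^k(c_0)\in K$ would force $z=f_0^k(c_0)$.

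Next, assuming $K\neq\{z\}$, I would use (i) to manufacture an invariant continuum and run an expansion argument. For all large $n$, every depth-$n$ piece $Y$ with $z\in\overline Y$ avoids $\Crit(f_0)$, so $f_0\colon Y\to f_0(Y)$ is a conformal homeomorphism onto a depth-$(n-1)$ piece through $f_0(z)$. Iterating along the (eventually periodic) orbit of $z$ and passing to the limit, with $z^\ast:=f_0^{j_0}(z)\in Z_0$ of period $p_0$, one gets a non-degenerate continuum $P_\infty\subset\bigcap_n\mathcal Y_n(z^\ast)$ — non-degenerate because the finite-to-one map $f_0^{j_0}$ cannot collapse the continuum $K$ — which is the decreasing intersection of closed depth-$m$ pieces at $z^\ast$ on which $f_0^{p_0}$ restricts to a conformal homeomorphism; hence $f_0^{p_0}(P_\infty)=P_\infty$, $f_0^{p_0}$ acts on $P_\infty$ as a homeomorphism, and $z^\ast\in P_\infty$ is a repelling fixed point of $f_0^{p_0}$. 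Applying (i)--(iii) at $z^\ast$ and along its finite $f_0$-orbit in $Z_0$, the whole $f_0$-orbit of $P_\infty$ lies in $J_{f_0}$ and is disjoint from $\Crit(f_0)$ and from $\{f_0^k(c_0):k\ge 1\}$. One then contradicts $f_0^{p_0}(P_\infty)=P_\infty$ by expansion: $f_0$ is strictly expanding in the hyperbolic metric of $\mathbb C\setminus P(f_0)$ along any forward-invariant compactum of $J_{f_0}$ disjoint from the postcritical set $P(f_0)$ and containing a repelling periodic point, so iterating $f_0^{p_0}$ would blow up $\mathrm{diam}\,P_\infty$.

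The main obstacle is this last step, and inside it the point that $P_\infty$ must be shown disjoint not merely from $\{f_0^k(c_0)\}$ but from its whole accumulation set $\omega(c_0)$ when $c_0$ is combinatorially recurrent. This is exactly where non-renormalizability of $f_0$ at $c_0$ is indispensable — the critical orbit leaves $X^{(n)}$ infinitely often for every $n$, so it cannot be trapped against $P_\infty$ — and where one invokes the complex-bounds and rigidity machinery already underlying Lemma~\ref{lem:X shrinking} (cf.~\cite{KS,KL}); the argument is the analogue of the spreading argument in~\cite{SW}. Everything preceding it is combinatorial and soft.
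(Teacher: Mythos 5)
Your reduction to an invariant non-degenerate continuum $P_\infty$ through $z^\ast\in Z_0$, and the soft facts (i)--(iii), are essentially sound, but the decisive expansion step has a genuine gap --- precisely the one you flag as ``the main obstacle'' and then do not close. To contract $P_\infty$ in the hyperbolic metric of $\C\setminus P(f_0)$ you need $P_\infty$ to be disjoint from the \emph{closed} postcritical set $P(f_0)\supset\omega(c_0)$, whereas (i)--(iii) only give disjointness from the orbit $\{f_0^k(c_0)\}_{k\ge0}$ itself. Admissibility of $Z_0$ rules out $Z_0\cap\mathrm{orb}(c_0)\ne\emptyset$, not $Z_0\cap\omega(c_0)\ne\emptyset$: the point $z^\ast=\tau_0$ may perfectly well lie in $\omega(c_0)$ --- the paper explicitly treats exactly this possibility as Case~2 in the proof of Lemma~\ref{lem:Julia} --- and then $P_\infty$ meets $P(f_0)$ and the metric you want to expand along $P_\infty$ is not even defined there. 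Your proposed remedy (``the critical orbit leaves $X^{(n)}$ infinitely often, so it cannot be trapped against $P_\infty$'') is a non sequitur: returning to and leaving every $X^{(n)}$ is perfectly compatible with $\omega(c_0)$ containing $\tau_0$, or indeed large parts of $J_{f_0}$. Appealing to the machinery of \cite{KS,KL} behind Lemma~\ref{lem:X shrinking} does not repair this either, since that machinery controls the critical pieces $X^{(n)}$, not the pieces attached to $Z_0$.

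The fix is to localize the expansion to the puzzle, which is what the paper does in two lines. Every point of $\mathrm{orb}(z)$ is buried and distinct from $c_0$; since $Y^{(n)}$ shrinks to $\overline{U_0}$ and $X^{(n)}$ shrinks to $\{c_0\}$ (Theorem~\ref{thm:puzzle} and Lemma~\ref{lem:X shrinking}), and $\mathrm{orb}(z)$ is a finite set, there is one fixed depth $n_0$ such that the forward orbit of $z$ avoids $\overline{Y^{(n_0)}}\cup\overline{X^{(n_0)}}$. Consequently every piece of depth $n\ge n_0$ whose closure contains $z$ pulls back univalently from one of the finitely many pieces of depth $n_0$, and \cite[Lemma~1.8]{Mil4} then gives the shrinking directly. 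That lemma requires only avoidance of the critical puzzle pieces of a single depth --- a hypothesis your set-up delivers --- and not avoidance of the closure of the critical orbit. With that substitution the statement follows at once (and most of your steps, including the construction of $P_\infty$, become unnecessary); as written, your proof does not go through.
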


 \begin{proof} According to Theorem~\ref{thm:puzzle}, there exists $n_0$ large such that $Y\cap (X^{(n_0)}\cup Y^{(n_0)})=\emptyset$. Then the proposition follows by \cite[Lemma~1.8]{Mil4}.
\end{proof}
For a more general result by Kozlovski and van Strien, see \cite[Theorem~1.1]{KS}. The following corollary is a combinatorial version for Proposition~\ref{prop:noncrpiece}:

\begin{Coro}\label{coro:shrinking}Let $z \in \bigcup_{j\ge 0}f^{-j}_0(Z_0)$ and let $\Theta(z)$ be the set of angles $\theta$ for which $\mathcal R_{f_0}(\theta)$ lands at $z$. Then for each component $(\theta_1,\theta_2)$ of $\mathbb R/\mathbb Z\setminus \Theta(z)$, there exist two monotone sequences $\{a_n\}$ and $\{b_n\}$ in $(\theta_1,\theta_2)$ such that $a_n \sim_{\lambda_{f_0}} b_n$ for all $n>0$ and $a_n \to \theta_1$ and $ b_n \to \theta_2$ as $n$ tends to infinity. 
\end{Coro}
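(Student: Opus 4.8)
The plan is to deduce the combinatorial statement from the geometric one in Proposition~\ref{prop:noncrpiece} by tracking which puzzle pieces of depth $n$ touch the point $z$ and examining their "external angle spans". First I would fix $z\in\bigcup_{j\ge 0}f_0^{-j}(Z_0)$ and a complementary component $(\theta_1,\theta_2)$ of $\R/\Z\setminus\Theta(z)$; here $\mathcal R_{f_0}(\theta_1)$ and $\mathcal R_{f_0}(\theta_2)$ are two external rays landing at $z$ that are \emph{adjacent} at $z$, i.e.\ the sector $S$ bounded by $\overline{\mathcal R_{f_0}(\theta_1)\cup\mathcal R_{f_0}(\theta_2)}$ (on the side corresponding to $(\theta_1,\theta_2)$) together with $z$ contains no other ray landing at $z$. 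For each depth $n$, let $Y_n$ be the puzzle piece of depth $n$ contained in this sector and having $z$ on its boundary; since $z\in\Gamma_n^{Z_0}$ for all large $n$, such a $Y_n$ exists and $Y_{n+1}\subset Y_n$ for $n$ large. By Proposition~\ref{prop:noncrpiece}, $\mathrm{diam}(Y_n)\to 0$.

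Next I would extract the two sequences. The closure of $Y_n$ meets the equipotential curve $E_{f_0}(1)$ in an arc, and the two endpoints of that arc lie on external rays $\mathcal R_{f_0}(a_n)$ and $\mathcal R_{f_0}(b_n)$ that bound $Y_n$; because $Y_n$ is a component of $\C\setminus\Gamma_n^{Z_0}$ and $Y_{n+1}\subseteq Y_n$, the angles can be chosen so that $a_n$ increases and $b_n$ decreases in $(\theta_1,\theta_2)$. Each of these boundary rays of $Y_n$ (being part of $\Gamma_n^{Z_0}=f_0^{-n}(\Gamma_0^{Z_0})$) is a preimage of a ray landing at a point of $Z_0$, hence lands at a preimage of a point of $Z_0$; moreover the two rays $\mathcal R_{f_0}(a_n)$ and the "partner" ray on the boundary of $Y_n$ on the same side land at a common point $z_n'\in f_0^{-n}(Z_0)$, so $a_n\sim_{\lambda_{f_0}}$ that partner angle, and similarly for $b_n$. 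The point to nail down is that one may actually take the \emph{same} pair: using that the rays $\mathcal R_{f_0}(a_n)$ and $\mathcal R_{f_0}(b_n)$ bound the single piece $Y_n$ whose diameter shrinks, $a_n$ and $b_n$ both land at points that converge to $z$, and by replacing $a_n,b_n$ with the outermost pair of the partition of $(\theta_1,\theta_2)$ consistent with the ray picture at $z_n'$, one gets $a_n\sim_{\lambda_{f_0}}b_n$. Finally, $a_n\to\theta_1$ and $b_n\to\theta_2$: if $a_n$ stayed bounded away from $\theta_1$, the nested pieces $Y_n$ would all contain the ray $\mathcal R_{f_0}(\theta_1+\varepsilon)$ up to its landing, contradicting $\mathrm{diam}(Y_n)\to 0$ (the landing point of that ray is a definite point, while $\overline{Y_n}$ shrinks to $z$ which by adjacency does not lie on that ray). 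Monotonicity gives convergence, and the nesting gives the limits are exactly $\theta_1,\theta_2$.

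I would organize the write-up as: (i) set up the sector $S$ and the nested pieces $Y_n$; (ii) observe $\overline{Y_n}$ shrinks to $z$ by Proposition~\ref{prop:noncrpiece}; (iii) read off the boundary angles $a_n<b_n$ and check they land at a common preimage-of-$Z_0$ point, so $a_n\sim_{\lambda_{f_0}}b_n$; (iv) prove monotonicity and the limits $a_n\to\theta_1$, $b_n\to\theta_2$ from the nesting $Y_{n+1}\subseteq Y_n$ together with $\bigcap_n\overline{Y_n}=\{z\}$ and the fact that $\mathcal R_{f_0}(\theta_1),\mathcal R_{f_0}(\theta_2)$ are the boundary rays of $S$ at $z$.

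The main obstacle I expect is step (iii): being careful that the two boundary external rays of the depth-$n$ piece $Y_n$ are genuinely $\lambda_{f_0}$-equivalent, rather than merely landing at (possibly distinct) points whose separating rays nest correctly. This needs the structure of $\Gamma_n^{Z_0}$ — every ray in it is a component of $f_0^{-n}$ of a ray landing on $Z_0$, hence lands, and the pieces are exactly the bounded complementary components — so the two rays on $\partial Y_n$ bounding $Y_n$ inside $S$ co-land at a point of $f_0^{-n}(Z_0)$. Making this precise (and checking the periodic/preperiodic landing so the equivalence relation $\lambda_{f_0}$ actually sees it) is the one place the argument requires genuine care; everything else is a compactness/nesting argument packaging Proposition~\ref{prop:noncrpiece}.
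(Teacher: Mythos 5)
Your overall strategy (nested pieces $Y_n\subset S$ touching $z$, $\mathrm{diam}(Y_n)\to 0$ by Proposition~\ref{prop:noncrpiece}, then reading off co-landing boundary angles) is the right one — the paper offers no proof at all, presenting the corollary as the ``combinatorial version'' of Proposition~\ref{prop:noncrpiece}, and your outline is the natural way to make that precise. But the step you yourself flag in (iii) is a genuine gap, and the device you propose does not close it. The set $A_n$ of angles $\theta\in(\theta_1,\theta_2)$ with $\mathcal R_{f_0}(\theta)\cap Y_n\ne\emptyset$ has complement in $(\theta_1,\theta_2)$ equal to a disjoint union of closed intervals $[c_1,d_1],\dots,[c_{k_n},d_{k_n}]$, each being the ``shadow'' of a single point $w_i\in f_0^{-n}(Z_0)\cap\partial Y_n$, so that indeed $c_i\sim_{\lambda_{f_0}}d_i$. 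Your argument correctly shows $c_1\to\theta_1$ and $d_{k_n}\to\theta_2$ (a ray with angle in $(\theta_1,c_1)$ or $(d_{k_n},\theta_2)$ lies in $Y_n$ down to its landing point). But $c_1\sim d_{k_n}$ only if $k_n=1$; if $k_n\ge 2$, the ``outermost pair at $z_n'$'' is $(c_1,d_1)$, and nothing in your nesting/diameter argument forces $d_1\to\theta_2$: a priori $\partial Y_n$ could be spanned by a chain of several distinct ray-pairs, each co-landing but none reaching from near $\theta_1$ to near $\theta_2$.

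To close the gap you need one extra input: continuity of the landing (Carath\'eodory) map, which is available because $J_{f_0}$ is locally connected (established in the proof of Lemma~\ref{lem:puzzle0} via \cite[Theorem~1.1]{KS}). Fix $\epsilon>0$ and set $I_\epsilon=[\theta_1+\epsilon,\theta_2-\epsilon]$. Since $I_\epsilon\cap\Theta(z)=\emptyset$, the compact set $\gamma(I_\epsilon)$ (images under the landing map) has positive distance $\delta$ from $z$; once $\mathrm{diam}(\overline{Y_n})<\delta$, no angle of $I_\epsilon$ can lie in $A_n$ (such a ray would land inside $\overline{Y_n}$), so $I_\epsilon$ is contained in a \emph{single} component $[c_{i},d_{i}]$ of the complement, whose endpoints co-land at a point of $f_0^{-n}(Z_0)$ and satisfy $c_i\le\theta_1+\epsilon$, $d_i\ge\theta_2-\epsilon$. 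Monotonicity then comes for free from $A_{n+1}\subset A_n$ (the complementary components only grow with $n$), so the endpoints $a_n=c_{i(n)}$, $b_n=d_{i(n)}$ of the component containing a fixed $I_{\epsilon_0}$ are monotone and converge to $\theta_1$, $\theta_2$ by letting $\epsilon$ run to $0$. Two cosmetic points: the equipotential bounding a depth-$n$ piece is $E_{f_0}(d^{-n})$, not $E_{f_0}(1)$, and $\overline{Y_n}$ may meet it in several arcs; also your monotonicity directions are reversed ($a_n$ decreases to $\theta_1$, $b_n$ increases to $\theta_2$), though the statement only asks for monotone sequences.
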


As an application, we prove:
\begin{Lemma}\label{lem:nondegenerate}Suppose two external rays  $\mathcal R_{f_0}(t)$ and $\mathcal R_{f_0}(s)$ of $f_0$ land at $z_s$ and $z_t$ respectively. If $z_t,z_s \in \bigcup\limits_{n=0}^{\infty} f^{-n}_0(Z_0)$ and $z_t \ne z_s$, then $\mathcal R_f(t)$ and $\mathcal R_f(s)$ land at two different points for every $f \in \mathcal C(\lambda_{f_0})$.  
\end{Lemma}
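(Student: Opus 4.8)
The plan is to use the combinatorial rigidity encoded in the rational lamination $\lambda_{f_0}$ together with Corollary~\ref{coro:shrinking} to separate $z_t$ from $z_s$ combinatorially, and then transport that separation to any $f\in\mathcal C(\lambda_{f_0})$. First I would note that since $z_t\ne z_s$ and both lie in $\bigcup_n f_0^{-n}(Z_0)$, the angle sets $\Theta(z_t)$ and $\Theta(z_s)$ are disjoint finite subsets of $\mathbb Q/\mathbb Z$, and each is a union of $\lambda_{f_0}$-classes. Because the two landing points are distinct and each carries at least two rays (points in $\bigcup_n f_0^{-n}(Z_0)$ are eventually periodic and biaccessible), $\Theta(z_s)$ lies in a single component $(\theta_1,\theta_2)$ of $(\mathbb R/\mathbb Z)\setminus\Theta(z_t)$; symmetrically $t$ lies in one complementary arc of $\Theta(z_s)$ and $s$ in the arc $(\theta_1,\theta_2)$.

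Next I would apply Corollary~\ref{coro:shrinking} to $z_t$: the component $(\theta_1,\theta_2)$ of $(\mathbb R/\mathbb Z)\setminus\Theta(z_t)$ that contains all of $\Theta(z_s)$ (in particular contains $s$) admits monotone sequences $a_n\uparrow\theta_1$... wait — I want them converging to the endpoints from inside, so $a_n\to\theta_1$, $b_n\to\theta_2$, with $a_n\sim_{\lambda_{f_0}}b_n$. Since $s\in(\theta_1,\theta_2)$ while $t\notin(\theta_1,\theta_2)$ (indeed $t\in\Theta(z_t)$, an endpoint-type angle, or at worst outside the open arc), for $n$ large enough we have $a_n<s<b_n$ in cyclic order on the arc while $t$ lies outside $(a_n,b_n)$. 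Symmetrically, applying the corollary to $z_s$ I get $a'_n\sim_{\lambda_{f_0}}b'_n$ with $t$ trapped strictly between them and $s$ outside. The upshot is: there exist $\lambda_{f_0}$-equivalent pairs that ``surround'' $s$ but not $t$ (and vice versa), so $s\not\sim_{\lambda_{f_0}} t$ is witnessed not just by the classes being different but by a separating pair of identified angles.

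Then I would do the transfer to $f\in\mathcal C(\lambda_{f_0})$. Fix such an $f$; by definition $\lambda_f\supset\lambda_{f_0}$. Pick one of the separating pairs from the previous paragraph, say $a=a_n$, $b=b_n$ with $a\sim_{\lambda_{f_0}}b$, $s\in(a,b)$, $t\notin[a,b]$. Then $\mathcal R_f(a)$ and $\mathcal R_f(b)$ land at a common point $w$ of $f$ (here using $\lambda_f\supset\lambda_{f_0}$ and that $a,b$ are rational, so the rays land). The curve $\overline{\mathcal R_f(a)}\cup\{w\}\cup\overline{\mathcal R_f(b)}$ together with the equipotential structure divides the plane so that $\mathcal R_f(s)$ and $\mathcal R_f(t)$ have accumulation sets in different connected components of the complement of this Jordan arc: indeed $\mathcal R_f(s)$ enters the sector $S(a,b)$ bounded by these two rays on the side containing angles in $(a,b)$, whereas $\mathcal R_f(t)$ stays in the complementary sector. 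Hence $\overline{\mathcal R_f(s)}\cap\overline{\mathcal R_f(t)}\subset\{w\}$ at most. To rule out that both $\mathcal R_f(s)$ and $\mathcal R_f(t)$ land at the common point $w$, I would run the argument once more with a \emph{different} separating pair, or simply observe that we may equally surround $t$ away from $s$ (using the corollary applied at $z_s$), giving a second Jordan arc $\overline{\mathcal R_f(a')}\cup\{w'\}\cup\overline{\mathcal R_f(b')}$ that separates the accumulation sets the other way; the two separations together force the landing points (which exist since $s,t$ are rational and their rays land by local connectivity, or at least the accumulation sets are disjoint) to be distinct.

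The main obstacle I anticipate is the bookkeeping with cyclic order and the degenerate configurations: I must make sure that $s$ genuinely lies in the \emph{open} arc $(\theta_1,\theta_2)$ while $t$ lies outside its closure, so that a sufficiently deep pair $(a_n,b_n)$ from Corollary~\ref{coro:shrinking} actually strictly separates them — this requires knowing $t\notin\Theta(z_t)$... no, $t\in\Theta(z_t)$, so $t$ is an endpoint of some complementary arc of $\Theta(z_s)$, and I need $s$ not to be that same endpoint; this is exactly where $z_t\ne z_s$ and the disjointness of the two (saturated) angle sets is used. The other delicate point is justifying that the rays $\mathcal R_f(a),\mathcal R_f(b)$ for $f\in\mathcal C(\lambda_{f_0})$ do land at a common point: this is immediate from $a\sim_{\lambda_f}b$ once we know rational rays land, which holds for $f\in\mathcal C(\lambda_{f_0})$ by~\cite[Theorem~8.1]{IK} (or can be arranged since only finitely many rays are involved and they are rational). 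Once those two points are nailed down the rest is a standard Jordan-curve separation argument.
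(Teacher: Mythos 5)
Your setup coincides with the paper's proof: the disjoint saturated angle sets $\Theta(z_t)$, $\Theta(z_s)$, the separating pairs $(a_n,b_n)$ with $a_n\sim_{\lambda_{f_0}}b_n$ supplied by Corollary~\ref{coro:shrinking}, the transfer to $f$ via $\lambda_{f_0}\subset\lambda_f$, and the Jordan-arc separation showing that a putative common landing point of $\mathcal R_f(s)$ and $\mathcal R_f(t)$ must lie on $\overline{\mathcal R_f(a_n)}\cup\overline{\mathcal R_f(b_n)}$, hence must equal the common landing point $w_n$ of that pair. Up to that point the argument is correct.

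The gap is in how you close the argument. Neither of your two alternatives yields a contradiction. Using ``one more'' separating pair, or a second arc built at $z_s$, only shows that the common landing point coincides with $w'$ as well, i.e.\ $w=w'$; but this is not absurd. The resulting configuration --- the six angles $a,s,b,a',t,b'$ all in one $\lambda_f$-class, with $\{a,b\}$ and $\{a',b'\}$ bounding disjoint arcs --- is unlinked and finite, and finitely many rational rays may perfectly well land at a single (pre)periodic point of $f$, since $\lambda_f$ is only required to \emph{contain} $\lambda_{f_0}$ and may identify more angles. So ``the two separations together force the landing points to be distinct'' is unjustified. The paper's proof closes the loop by using the \emph{entire} sequence of pairs: for every $n$ the common landing point of $\mathcal R_f(s)$ and $\mathcal R_f(t)$ would have to be $w_n$, so all the rays $\mathcal R_f(a_n),\mathcal R_f(b_n)$, $n\ge 1$, land at the single point $w$; since the $(a_n,b_n)$ are infinitely many distinct rational angles and only finitely many external rays can land at any point (the landing point of a rational ray is eventually periodic), this is a contradiction. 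Your argument becomes correct once you replace ``run it once more'' by this quantifier over all $n$.
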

\begin{proof} Let $\Theta(z_t)$ and $\Theta(z_s)\subset \mathbb R/\mathbb Z$ be the set of landing angles for $z_t$ and $z_s$ under $f_0$ respectively. There are two component $C$ and $D$ of $\mathbb R/\mathbb Z \setminus (\Theta(z_t)\cup \Theta(z_s))$ such that $C=(\theta_{\max}(s),\theta_{\min}(t))$ and $D=(\theta_{\max}(t),\theta_{\min}(s))$ where $\theta_{\min}(t),\theta_{\max}(t) \in \Theta(z_t)$ and $\theta_{\min}(s),\theta_{\max}(s) \in \Theta(z_s)$. By Corollary~\ref{coro:shrinking}, there exist $\{a_n\}\subset C$ and $\{b_n\}\subset D$ such that $a_n \sim_{\lambda_{f_0}}b_n$ and  $a_n \to \theta_{\min}(t)$, $b_n \to \theta_{\max}(t)$ as $n \to \infty$. Now assume $\mathcal R_f(t)$ and $\mathcal R_f(s)$  land at a common point $w$. Since $\lambda_{f_0} \subset \lambda_f$, $\mathcal R_f(a_n)$ and $\mathcal R_f(b_n)$ land at a common point. Moreover, this point must be $w$  by virtue of the construction. Hence, there are infinitely many external rays of $f$ landing at $w$, which is a contradiction.
\end{proof}
The above lemma implies all the $f\in \mathcal C(\lambda_{f_0})$ have the same puzzle structure with respect to $Z_0$.

$$
{\beginpicture
\setcoordinatesystem units <.8cm,.8cm>

\circulararc 360 degrees from 3 3 center at 0 0
\plot -2 0  -3 3 /
\plot -2 0  -3 -3 /
\plot -2 0  -4.25 0 /
\plot -1 0  -1.9 3.8 /

\plot 2 0  3 3 /
\plot 2 0  3 -3 /
\plot 2 0  4.25 0 /
\plot -1 0  -1.9 -3.8 /

\put{$C$} at 0 4.6
\put{$\theta_{\min}(t)$} at -3.3 3.5 
\put{$\theta_{\max}(t)$} at -3.3 -3.5
\put{$t$} at -4.5 0
\put{$a_n$} at -1.9 4.1

\put{$\theta_{\max}(s)$} at 3.4 3.5 
\put{$\theta_{\min}(s)$} at 3.4 -3.5
\put{$s$} at 4.5 0
\put{$b_n$} at -1.9 -4.1
\put{$D$} at 0 -4.6

\put{figure~3.1} at 0 -6
\endpicture}
$$


\section{Kahn's quasiconformal distortion bounds}\label{sec:Kahn}
From this section to the end of this paper, we fix an $f_0$-admissible set $Z_0=orb(\tau_0)$  which is given by Theorem~\ref{thm:puzzle}. 

In this section, we will modify the argument in~\cite{SW} to obtain a K-qc extension principle. This enables us to use the criterion for convergence of Thurston's algorithm which was established in \cite{SW}.

Let $L_n$ denote the domain of the first landing map to $Y^{(n)}$:
\begin{equation}\label{eqn:dfnLn}
L_n=\left\{z\in \C: \exists k\ge 0 \text{ such that } f_0^k(z)\in Y^{(n)}\right\}.
\end{equation}
Our main goal in the section is to show:
\begin{Theorem}\label{thm:Kqcpuzzle'} For any puzzle piece $Y$, there exists a positive integer $N$ such that 
\[\mathcal{QD}(L_{N}\cap Y, Y)<\infty.\]
\end{Theorem}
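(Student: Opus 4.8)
First I would recall that $\mathcal{QD}(A,B)$ denotes the quasiconformal distortion constant — i.e., the infimum of the dilatations of quasiconformal homeomorphisms of the plane carrying the pair $(B,A)$ to some fixed reference pair, or equivalently a conformal-modulus-type invariant measuring how ``thin'' $A$ sits inside $B$. The statement asserts that for any puzzle piece $Y$ we can find a depth $N$ so large that $L_N \cap Y$, the part of the first-landing domain to $Y^{(N)}$ sitting inside $Y$, is uniformly non-degenerate inside $Y$. The plan is to reduce this to a statement about the critical puzzle pieces $Y^{(n)}$ and $X^{(n)}$, which by Theorem~\ref{thm:puzzle} and Lemma~\ref{lem:X shrinking} shrink to $\overline{U_0}$ and $\{c_0\}$ respectively, and to the fact (Proposition~\ref{prop:noncrpiece}) that every non-critical puzzle piece touching $Z_0$ shrinks to a point.

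**The combinatorial reduction.** The key point is that a first-landing map to $Y^{(N)}$ is a composition of first-return-type maps, each of which is univalent \emph{except} where it passes over one of the two critical values. So I would decompose the landing map according to how many times the orbit visits a neighborhood of $0$ (the periodic critical point) versus a neighborhood of $c_0$ (the free Julia critical point). The visits near $0$ are harmless for non-degeneracy purposes because $f_0^{m_0}$ on $Y^{(n)}$ is polynomial-like with filled Julia set $\overline{U_0}$, so the relevant maps are quasiregular with controlled degree and the pieces stay ``fat'' by the bounded-geometry of the attracting basin. The delicate visits are those near $c_0$: here I would invoke Lemma~\ref{lem:non-degenerated}, which gives a level $\nu_0$ such that the central component of the first return to $X^{(\nu_0)}$ is compactly contained in $X^{(\nu_0)}$, and the generalized-polynomial-like structure of the first-return map to $P_{s_0+1}$ together with non-renormalizability at $c_0$. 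This is exactly the input that feeds into Kahn's covering-lemma / quasiadditivity machinery. Concretely, I would pick $N$ large enough (depending on $Y$) that the landing map $f_0^{k}: (\text{component of }L_N\cap Y) \to Y^{(N)}$ factors through $X^{(\nu_0)}$ or $Y^{(\kappa_0)}$ in a controlled way, so that the total ``branching'' is bounded, and then apply the $K$-qc extension argument adapted from \cite{SW} (the purpose of this section).

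**Carrying it out.** The steps, in order: (1) Use Lemma~\ref{lem:nondegenerate} / Corollary~\ref{coro:shrinking} and Proposition~\ref{prop:noncrpiece} to ensure all puzzle pieces are non-degenerate and shrink, fixing a ``good depth'' $\kappa_0$ (from Lemma~\ref{lem:non-degenerated}'s proof) and $\nu_0$. (2) Show that for $N$ large, every component $Q$ of $L_N \cap Y$ maps univalently by some iterate onto $Y^{(N)}$ once we have descended past the last critical visit; the number of critical visits along any such orbit piece is bounded because each visit to $X^{(\nu_0)}$ forces deep descent and $f_0$ is non-renormalizable at $c_0$ (so $P_n \Subset P_{n-1}$ eventually, giving definite moduli). (3) Assemble the modulus estimates: between consecutive critical visits the relevant annulus has a definite modulus (from the compact containment $P_{s_0+1}\Subset P_{s_0}$ pulled back univalently), and these add up quasi-additively, so $\mathcal{QD}(L_N\cap Y, Y)$ is bounded by a constant depending only on the geometry of finitely many pieces and the (bounded) combinatorial complexity. (4) Conclude by the $K$-qc extension principle of this section that the bound is finite.

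**Main obstacle.** The hard part will be step (3): controlling the quasiconformal distortion when the landing orbit makes many passes near the \emph{free} critical point $c_0$. Unlike the hyperbolic case of \cite{SW}, here $c_0$ lies in the Julia set, so one does not get geometry for free from an attracting Fatou component; one must genuinely exploit non-renormalizability at $c_0$ to produce definite moduli (via the complex-bounds / Kahn–Lyubich type estimates for the generalized polynomial-like return map to $P_{s_0+1}$), and then argue that these combine without the distortion blowing up — this is precisely where Kahn's quasiadditivity of moduli is needed and where the argument departs most from \cite{SW}. Everything involving the periodic critical point $0$ and the purely non-critical pieces is routine by comparison.
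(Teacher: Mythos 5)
Your proposal misidentifies where the difficulty of this theorem lies, and the machinery you invoke does not bear on it. First, $\mathcal{QD}(A,U)$ is not a modulus-type invariant: it is the best constant $K$ such that every qc map of $U$ that is conformal a.e.\ off $A$ can be replaced by a $K$-qc map with the same boundary values. Since any compact $A\Subset U$ automatically satisfies $\mathcal{QD}(A,U)<\infty$ (Lemma~\ref{compact}), the whole content of the theorem is that $L_N\cap Y$ is \emph{not} compactly contained in $Y$: the landing domains accumulate on the points of $\partial Y\cap J(f_0)$ lying over $Z_0$. Your proposal never engages with this boundary accumulation. Instead you organize the argument around visits of the landing orbit to the critical pieces, complex bounds and quasi-additivity of moduli for the return map to $P_{s_0+1}$ --- but that is the toolkit used to prove that $X^{(n)}$ shrinks to $\{c_0\}$ (Lemma~\ref{lem:X shrinking}), not a qc extension principle; a bound on degrees and on moduli between critical visits does not produce a uniform bound on $\mathcal{QD}$ for a non-compactly-contained set. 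Moreover your step (4), ``conclude by the $K$-qc extension principle of this section,'' is circular: Theorem~\ref{thm:Kqcpuzzle'} \emph{is} that principle.

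What the paper actually does is local to the repelling periodic point $\tau_0$ and its preimages on $\partial Y$, and involves no moduli estimates at the free critical point. Near $\tau_0$ one builds a slice $\widehat S$ and an expanding map $\widehat F:\widehat A\cup\widehat B\to\widehat S$ out of univalent inverse branches of $f_0^{p}$ and $f_0^{r}$; a pullback argument (Lemma~\ref{lem:qc homeo}) conjugates this quasiconformally to Kahn's recursively notched square model, whence $\mathcal{QD}(\widehat{\mathcal N},\widehat S)<\infty$ by Theorem~\ref{thm:Kahn}. For $N$ large the landing set inside the slice is contained in $\widehat{\mathcal N}$, and conformal copies $W(z)$ of deep slices are transported to each $z\in\partial Y\cap J(f_0)$ by univalent branches, giving a uniform bound on $\mathcal{QD}(L_N\cap W(z),W(z))$; the remainder $L_N\setminus\bigcup_z W(z)$ is compact in $Y$, and Lemma~\ref{lem:qc3} glues. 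The only role of the critical points $0$ and $c_0$ in this proof is that the inverse branches must avoid finitely many of their forward images, which is arranged by taking the slices deep enough --- this is the sole point where the present setting differs from \cite{SW}, and it is a choice-of-constants issue, not one requiring the covering lemma or complex bounds.
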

As in \cite{SW}, the difficulty in proving the above theorem is that the landing domains $L_{N}$ may come arbitrarily close to the boundary of $Y$. To deal with the situation, we shall need a toy model developed by Kahn (\cite{Kahn}). We first construct an expanding map which is quasiconformally conjugate to Kahn's recursively notched square model, which will imply Theorem~\ref{thm:Kqcpuzzle'} holds for all the puzzle pieces of depth $0$. To complete the proof, we use a pullback argument. The proof is adapted from ~\cite[Section~4]{SW}. The readers who are familiar with that of \cite{SW} may skip the proof. The main difference is the following. In \cite{SW}, $\partial Y'$ is disjoint from the post-critical set for any puzzle piece $Y'$ since $f_0$ is post-critically finite. Thus for each $z\in J_{f_0} \cap Y'$, we can pullback a definite small neighborhood of $z$ univalently to an arbitrary depth. In our case, $f_0$ has a free Julia critical point $c_0$ which may accumulate on the boundaries of all the puzzle pieces. Thus we must choose a neighborhood of $\tau_0$ carefully so that we can pullback it univalently to some given depth.  To make it self-contained, we give an independent proof in this section.

\subsection{Quasiconformal Distortion Bounds}

Let us first recall some terminology from \cite{Kahn}. Let $U \subset \mathbb C$ be a Jordan domain and $A$ be a measurable subset of $U$. We say that $(A,U)$ has {\em bounded qc distortion} if there exists a constant $K\ge 1$ with the following property: if $\varphi:U\to \varphi(U)$ is a quasiconformal map and $\bar\partial \varphi=0$ a.e. outside $A$, then there is a $K$-q.c map $\tilde \varphi:U \to \varphi(U)$ such that $\tilde \varphi=\varphi$ on $\partial U$. Let  $\mathcal{QD}(A,U)$ denote the smallest $K$ satisfying the property.

We shall need the following easy facts.
\begin{Lemma}\label{compact}\cite[Fact~1.3.6]{Kahn} If $A \subset U$ is compact, then $\mathcal{QD}(A,U)<\infty$.
\end{Lemma}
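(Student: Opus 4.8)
\textbf{Proof proposal for Lemma~\ref{compact}.}

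The plan is to reduce the statement to a quantitative compactness/normal-families argument. Since $A$ is compact and $A \subset U$, there is a subdomain $V$ with $A \subset V \Subset U$; choose $V$ to be, say, a finite union of small round disks covering $A$, or simply a slightly shrunk copy of $U$, so that $\overline{V} \subset U$ and $U \setminus \overline V$ is a nondegenerate annular region separating $A$ from $\partial U$. The claim is that $\mathcal{QD}(A,U)$ is controlled purely in terms of the conformal geometry of the pair $(V,U)$, and in particular is finite.

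First I would fix a quasiconformal $\varphi:U \to \varphi(U)$ with $\bar\partial\varphi = 0$ a.e. outside $A$; thus $\varphi$ is conformal on $U \setminus A$, and in particular on the annulus $W := U \setminus \overline V$. The goal is to replace $\varphi$ on all of $V$ (where it may be badly distorting) by a $K$-qc map agreeing with $\varphi$ on $\partial U$, with $K$ independent of $\varphi$. To do this, consider $\psi := \varphi|_{U\setminus V}$, a conformal embedding of the annulus-with-a-hole region $U \setminus V$ into $\C$; it maps $\partial U$ to $\partial\varphi(U)$ and $\partial V$ to some Jordan curve $\gamma := \varphi(\partial V)$ bounding a Jordan domain $D \subset \varphi(U)$. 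The region $\varphi(U)\setminus \overline{D}$ is a conformal image of the fixed annular region $U \setminus \overline V$, so its moduli, and more importantly the geometry of $\gamma$ inside $\varphi(U)$, are constrained. Now I would glue: use $\varphi$ on $U \setminus V$ as is, and on $V$ use any qc homeomorphism $V \to D$ extending the boundary map $\varphi|_{\partial V}:\partial V \to \gamma$. Since $\partial V$ is a nice (say, real-analytic or piecewise-circular) curve and $\gamma = \varphi(\partial V)$ is a quasicircle whose ``room to move'' inside $\varphi(U)$ is uniformly bounded below — because $W=U\setminus\overline V$ has definite modulus and $\varphi|_W$ is conformal, the Koebe/Grötzsch estimates give a lower bound on the modulus of $\varphi(U)\setminus\overline D$ and hence on $\mathrm{dist}(\gamma,\partial\varphi(U))$ relative to $\mathrm{diam}$ — one can build such an extension with a dilatation bound $K$ depending only on the modulus of $W$ and the regularity of $\partial V$. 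A clean way to produce the extension on $V$ is: precompose the fixed conformal map $V \to \mathbb D$, then the boundary values give a homeomorphism $\partial\mathbb D \to \gamma$ which is quasisymmetric with constant controlled by the above geometry (so Beurling--Ahlfors, or directly the fact that $\gamma$ is a uniform quasicircle, applies), extend it to a $K$-qc map $\mathbb D \to D$, and finally postcompose with the Riemann map $\mathbb D \to D$. The resulting $\tilde\varphi$ equals $\varphi$ on $U \setminus V \supset \partial U$, is qc with the stated $K$, and $K$ does not depend on $\varphi$.

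The main obstacle is making the phrase ``$K$ depends only on $(V,U)$'' precise: one must show that conformality of $\varphi$ on the fixed annular region $W$ forces the image curve $\gamma$ to be a quasicircle with uniform constant and to sit inside $\varphi(U)$ with a uniform lower bound on its relative position, so that the boundary homeomorphism $\partial\mathbb D\to\gamma$ (read through fixed Riemann maps) is uniformly quasisymmetric. This is exactly where the compactness of $A$ (equivalently, the existence of the definite collar $W$ around $A$ inside $U$) is used, and it follows from standard distortion theorems for univalent functions together with the fact that the modulus of $\varphi(U)\setminus\overline D$ equals $\mathrm{mod}(W)$; I would cite Kahn's original treatment \cite{Kahn} rather than reprove these estimates, since the statement is quoted verbatim from there.
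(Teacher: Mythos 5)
The paper does not actually prove this lemma: it is quoted verbatim from Kahn as Fact~1.3.6 of \cite{Kahn}, so there is no in-paper argument to compare yours against. Your gluing strategy (keep $\varphi$ on the collar $U\setminus V$, where it is conformal, and replace it on $V$ by a uniformly quasiconformal map with the same boundary values) is indeed the standard route, but the justification you give for the uniform dilatation bound has a genuine gap at exactly the point you single out as the main obstacle. Conformality of $\varphi$ on the one-sided annulus $W=U\setminus\overline V$, and the equality $\mathrm{mod}\bigl(\varphi(U)\setminus\overline D\bigr)=\mathrm{mod}(W)$, control only where $\gamma=\varphi(\partial V)$ sits inside $\varphi(U)$; they say nothing about the shape of $\gamma$ or about the quasisymmetry of the boundary correspondence $\partial V\to\gamma$, and it is these last two that govern the dilatation of any extension into $V$ (the position of $\gamma$ relative to $\partial\varphi(U)$ is irrelevant for extending inward). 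A conformal map of an annulus can send its inner boundary circle to an arbitrarily bad Jordan curve; in your setting only the global quasiconformality of $\varphi$ rules this out, and that constant is precisely the one you are not allowed to use.

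What rescues the argument is the two-sided collar: since $A$ is compact and $A\subset V$, the map $\varphi$ is in fact conformal on a full neighborhood of $\partial V$ of definite width (not just on $W$), and the Koebe distortion theorem applied there yields both the Ahlfors three-point condition for $\gamma$ with a uniform constant and the uniform quasisymmetry of $\varphi|_{\partial V}$, after which your Beurling--Ahlfors step goes through. A cleaner variant avoids $\gamma$ altogether: conjugate by Riemann maps of $U$ and $\varphi(U)$ to reduce to a qc map $g:\mathbb D\to\mathbb D$ that is conformal on $\{r<|z|<1\}$, extend $g$ conformally across $\partial\mathbb D$ by Schwarz reflection to $\{r<|z|<1/r\}$, conclude from Koebe on this fixed annulus that $g|_{\partial\mathbb D}$ is quasisymmetric with constant depending only on $r$, and extend back into the disk. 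Either repair is routine, but as written your sketch points at the wrong estimates for the key uniformity claim.
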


\begin{Lemma}\label{qc1}\cite[Fact~1.3.4]{Kahn} Let $U$ and $V$ be Jordan domains in $\mathbb C$ and let $A$ be a measurable subset of $U$.
If there exists a $L$-qc map $g:U\to V$ and $\mathcal{QD}(A,U)<\infty$, then \[\mathcal{QD}(g(A),V)\le L^2\mathcal{QD}(A,U).\]
\end{Lemma}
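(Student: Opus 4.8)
The statement says that $\mathcal{QD}(\cdot,\cdot)$ changes by at most a factor $L^{2}$ under an $L$-quasiconformal change of coordinates, and the natural plan is a conjugation argument: pull a quasiconformal deformation of $V$ supported on $g(A)$ back to $U$ by $g$, invoke the hypothesis $\mathcal{QD}(A,U)<\infty$ there to replace it by a map of controlled dilatation with the same boundary values, and push that map forward by $g$ again. Concretely, let $\varphi\colon V\to W:=\varphi(V)$ be quasiconformal with $\bar\partial\varphi=0$ a.e.\ on $V\setminus g(A)$; the goal is to produce an $\bigl(L^{2}\mathcal{QD}(A,U)\bigr)$-qc map $\tilde\varphi\colon V\to W$ with $\tilde\varphi=\varphi$ on $\partial V$.

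The model situation is when $g$ is conformal off $A$. Since $g$ is a homeomorphism we have $g(U\setminus A)=V\setminus g(A)$, so $\Phi:=\varphi\circ g\colon U\to W$ satisfies $\bar\partial\Phi=0$ a.e.\ off $A$; the definition of $\mathcal{QD}(A,U)$ then produces a $\mathcal{QD}(A,U)$-qc map $\tilde\Phi\colon U\to W$ with $\tilde\Phi=\Phi$ on $\partial U$, and $\tilde\varphi:=\tilde\Phi\circ g^{-1}$ works, with dilatation at most $L\cdot\mathcal{QD}(A,U)$ — the single factor $L$ coming from $g^{-1}$. For a general $L$-qc $g$ the obstruction is that $\Phi=\varphi\circ g$ is now conformal off $A$ only relative to the pulled-back conformal structure $g^{\ast}\sigma_{0}$, which is $L$-bounded but not standard; to remedy this I would first absorb the complex dilatation of $g$, factoring $g=\eta\circ h$ (measurable Riemann mapping theorem) with $h\colon U\to U^{\ast}$ an $L$-qc homeomorphism carrying the Beltrami coefficient of $g$ and $\eta:=g\circ h^{-1}\colon U^{\ast}\to V$ conformal, so that $\varphi\circ\eta\colon U^{\ast}\to W$ is genuinely conformal off $h(A)$; one then applies the distortion estimate on $U^{\ast}$ and transports the result back along the conformal map $\eta^{-1}$ at no further cost, so that $\mathcal{QD}(g(A),V)\le\mathcal{QD}(h(A),U^{\ast})$.

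I expect the genuinely substantial step to be exactly this passage from ``$L$-qc off $A$'' to ``conformal off $A$'', together with the attendant control of $\mathcal{QD}(h(A),U^{\ast})$ in terms of $\mathcal{QD}(A,U)$: this is where the second factor $L$ is paid, and it cannot be folded back into the model case. It is carried out in \cite{Kahn} by returning to the definition of $\mathcal{QD}$ and composing normalized solutions of the relevant Beltrami equations, the dilatation of $g$ being charged once when pulling back and once when correcting the resulting conformal structure. This is also the reason Lemma~\ref{compact} cannot be invoked directly: the set $g(A)$ may accumulate on $\partial V$, so the replacement map has to be controlled uniformly up to the boundary while still realizing the prescribed boundary data, which is precisely what Kahn's quasiconformal distortion estimates are built to supply.
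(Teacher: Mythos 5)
The paper gives no proof of this lemma (it is stated only with a citation to Kahn's Fact~1.3.4), so there is no paper argument to compare against word-for-word; I am assessing your sketch against what the correct argument must be. Your overall strategy is right and is surely the intended one: conjugate by $g$, apply the distortion bound on $U$, and conjugate back by $g^{-1}$, and your observation that $\Phi:=\varphi\circ g$ has dilatation bounded by $L$ a.e.\ off $A$ (rather than being conformal off $A$) is the correct diagnosis of where the difficulty lies.

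The gap is that you do not actually resolve that difficulty, and the route you sketch is circular. Factoring $g=\eta\circ h$ with $\eta$ conformal and $h$ carrying the Beltrami coefficient of $g$, and then using conformal invariance of $\mathcal{QD}$ to reduce to $\mathcal{QD}(h(A),U^{*})$, does not simplify anything: $h$ is again an arbitrary $L$-qc homeomorphism, with the same (unrestricted) behaviour on $A$ that $g$ had, so the target inequality $\mathcal{QD}(h(A),U^{*})\le L^{2}\mathcal{QD}(A,U)$ is literally the lemma you set out to prove with $g$ renamed $h$. You acknowledge the gap yourself (``it is carried out in Kahn by\ldots''), but that deferral is precisely the content of the lemma, so the proposal as written is an outline rather than a proof.

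The missing ingredient is already available in this section, namely Lemma~\ref{qc2}, and using it makes the excursion through the measurable Riemann mapping theorem unnecessary. Given any qc $\varphi:V\to W$ with $\bar\partial\varphi=0$ a.e.\ off $g(A)$, set $\Phi=\varphi\circ g:U\to W$; since $g$ is a bijection sending $U\setminus A$ onto $V\setminus g(A)$ and preserving null sets, $\mathrm{Dil}(\Phi)\le L$ a.e.\ off $A$. Lemma~\ref{qc2} with $K=L$ produces an $L\cdot\mathcal{QD}(A,U)$-qc map $\tilde\Phi:U\to W$ with $\tilde\Phi|_{\partial U}=\Phi|_{\partial U}$. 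Then $\tilde\varphi:=\tilde\Phi\circ g^{-1}:V\to W$ is $L^{2}\mathcal{QD}(A,U)$-qc (one more $L$ from $g^{-1}$) and $\tilde\varphi|_{\partial V}=\varphi|_{\partial V}$, which is exactly what the definition of $\mathcal{QD}(g(A),V)$ requires. Your ``model case'' observation that only a single factor of $L$ is paid when $g$ is conformal off $A$ is correct and consistent with this: in that case $\Phi$ is actually conformal off $A$, so the definition of $\mathcal{QD}(A,U)$ applies directly and the second $L$ never enters.
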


\begin{Lemma}\label{qc2} The following statements are equivalent:
\begin{enumerate}
\item [(i)] $\mathcal{QD}(A,U)=C<\infty$;
\item [(ii)] For any qc map $\varphi:U \to \varphi(U)$, if $\mathrm{Dil}(\varphi)\le K$ for some $K\ge 1$ a.e. outside $A$, then there is a $KC$-qc map $\widetilde\varphi:U \to \varphi(U)$ such that $\widetilde \varphi=\varphi$ on $\partial U$.
\end{enumerate}
\end{Lemma}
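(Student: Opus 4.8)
The statement to prove is Lemma~\ref{qc2}: the equivalence of $\mathcal{QD}(A,U)=C<\infty$ with the quantitative extension property for maps of dilatation $\le K$ outside $A$. Here is how I would approach it.

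\medskip

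The plan is to establish the two implications separately, with the $(ii)\Rightarrow(i)$ direction being essentially a tautology and the $(i)\Rightarrow(ii)$ direction requiring the main work, namely an affine reparametrization of the complex dilatation.

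\medskip

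For $(ii)\Rightarrow(i)$, I would simply specialize: take any quasiconformal $\varphi:U\to\varphi(U)$ with $\bar\partial\varphi=0$ a.e.\ outside $A$. Then $\mathrm{Dil}(\varphi)\le 1$ a.e.\ outside $A$, so $(ii)$ with $K=1$ produces a $C$-qc map $\widetilde\varphi$ agreeing with $\varphi$ on $\partial U$. By definition of $\mathcal{QD}$, this shows $\mathcal{QD}(A,U)\le C<\infty$. (One should also note $\mathcal{QD}(A,U)\ge$ the optimal constant is automatically $\le C$; the definition already gives the "smallest $K$", so we just need finiteness and the bound $\le C$. If one wants equality $=C$ rather than $\le C$, one needs that $C$ was the smallest constant in $(ii)$, which is part of the hypothesis as stated.)

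\medskip

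For $(i)\Rightarrow(ii)$, suppose $\mathcal{QD}(A,U)=C<\infty$ and let $\varphi:U\to\varphi(U)$ be qc with $\mathrm{Dil}(\varphi)\le K$ a.e.\ outside $A$. The idea is to post-compose, or rather pre-compose, $\varphi$ with a quasiconformal map $h$ of $U$ that "straightens" the Beltrami coefficient of $\varphi$ on $U\setminus A$ to zero while keeping control on dilatations. Concretely: let $\mu$ be the Beltrami coefficient of $\varphi$ and define $\nu = \mu$ on $U\setminus A$ and $\nu=0$ on $A$; since $\|\nu\|_\infty\le \|\mu\|_\infty<1$, by the measurable Riemann mapping theorem there is a qc map $h:U\to U$ (normalized to fix $\partial U$ pointwise, e.g.\ by first solving on $\C$ extending $\nu$ by $0$ and then... actually more carefully: solve for a qc $h:U\to h(U)$ with $\mu_h=\nu$, then $h(U)$ is a Jordan domain) with $\mathrm{Dil}(h)\le K$. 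Then $\psi:=\varphi\circ h^{-1}$ has $\bar\partial\psi=0$ a.e.\ outside $h(A)$: by the composition formula for Beltrami coefficients, on $h(U)\setminus h(A)$ the dilatations cancel. Now apply the definition of $\mathcal{QD}(h(A),h(U))$: by Lemma~\ref{qc1} applied to the $K$-qc map $h$, we have $\mathcal{QD}(h(A),h(U))\le K^2\,\mathcal{QD}(A,U) = K^2 C$, hmm — this gives $K^2C$, not $KC$.

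\medskip

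So the naive cancellation argument loses a factor of $K$, and the main obstacle is precisely recovering the sharp constant $KC$ rather than $K^2C$. The fix, which I expect is the crux, is to not fully straighten on $A$ but instead exploit that $\mathcal{QD}$ allows one to \emph{replace} the dilatation on $A$ freely. The right move: consider $\varphi$ with $\mu_\varphi$; one wants a $KC$-qc map equal to $\varphi$ on $\partial U$. Apply the definition of $\mathcal{QD}(A,U)=C$ directly but with a rescaled target — specifically, there is a standard trick (used in Kahn's notes and in holomorphic motions) that $\mathcal{QD}$ is stated for conformal-outside-$A$ maps, and the passage to dilatation-$K$-outside-$A$ maps costs exactly one factor $K$ because one writes $\varphi = \varphi_1\circ\varphi_2$ where $\varphi_2$ is $K$-qc globally (absorbing the outside dilatation, with its own dilatation $\le K$ everywhere after averaging/extension) and $\varphi_1$ is conformal outside $\varphi_2(A)$ — then $\mathcal{QD}$ applies to $\varphi_1$ giving a $C$-qc replacement, and composing with $\varphi_2$ gives $KC$. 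The point is to choose $\varphi_2$ to be $K$-qc on all of $U$ (not $K^2$): take $\varphi_2$ to solve the Beltrami equation for the coefficient that equals $\mu_\varphi$ outside $A$ and $0$ on $A$ — this has dilatation $\le K$ — and then $\varphi_1=\varphi\circ\varphi_2^{-1}$ is conformal outside $\varphi_2(A)$, so $\mathcal{QD}$ gives a $C$-qc $\widetilde\varphi_1$ with $\widetilde\varphi_1=\varphi_1$ on $\partial\varphi_2(U)$; setting $\widetilde\varphi = \widetilde\varphi_1\circ\varphi_2$ yields a map that is $\le KC$-qc (composition of $C$-qc and $K$-qc) and agrees with $\varphi$ on $\partial U$ since $\varphi_2$ fixes... one must be careful that $\varphi_2$ can be taken to fix $\partial U$, which follows by first extending the Beltrami coefficient by $0$ to $\C\setminus U$ and normalizing the global solution. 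This completes $(i)\Rightarrow(ii)$, and I would write out these composition-of-dilatation estimates carefully since that is where the sharp constant lives.
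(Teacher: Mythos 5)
The paper does not actually write out a proof of this lemma (it cites \cite[Lemma~4.3]{SW}), so the only question is whether your argument is correct. Your direction (ii)$\Rightarrow$(i) is fine. In (i)$\Rightarrow$(ii), however, there is a genuine gap at exactly the point you flagged and then claimed to fix. In your decomposition $\varphi=\varphi_1\circ\varphi_2$ with $\mu_{\varphi_2}$ equal to $\mu_\varphi$ outside $A$ and $0$ on $A$, the map $\varphi_1$ is conformal outside $\varphi_2(A)$ \emph{on the domain $\varphi_2(U)$}. The extension principle you must invoke for $\varphi_1$ is therefore $\mathcal{QD}(\varphi_2(A),\varphi_2(U))$, and Lemma~\ref{qc1} only bounds this by $K^2\mathcal{QD}(A,U)=K^2C$ --- it is not equal to $C$. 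So your $\widetilde\varphi_1$ is only $K^2C$-qc, and $\widetilde\varphi_1\circ\varphi_2$ is only $K^3C$-qc. Normalizing $\varphi_2$ to fix $\partial U$ does not help, because $\varphi_2(A)\neq A$ in general; the hypothesis $\mathcal{QD}(A,U)=C$ is about the pair $(A,U)$, not about its image under a $K$-qc map.

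The repair is to reverse which piece of the Beltrami coefficient the auxiliary map carries. Let $h$ solve the Beltrami equation for the coefficient equal to $\mu_\varphi$ \emph{on} $A$ and $0$ on $U\setminus A$ (extend by $0$ to $\C$, restrict to $U$). Then $h:U\to h(U)$ is qc with $\bar\partial h=0$ a.e.\ outside $A$, so the hypothesis $\mathcal{QD}(A,U)=C$ applies \emph{directly to $h$} and produces a $C$-qc map $\tilde h:U\to h(U)$ with $\tilde h=h$ on $\partial U$. Meanwhile $g:=\varphi\circ h^{-1}$ is conformal a.e.\ on $h(A)$ (the coefficients cancel there) and has dilatation $\le K$ on $h(U\setminus A)$ (where $h^{-1}$ is conformal), hence $g$ is globally $K$-qc on $h(U)$. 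Now $\widetilde\varphi:=g\circ\tilde h:U\to\varphi(U)$ is $KC$-qc and equals $g\circ h=\varphi$ on $\partial U$. The point is that the map to which the qc-distortion hypothesis is applied must be the one supported on $A$ inside the original domain $U$; the globally $K$-qc factor is then composed on the outside at a cost of exactly one factor of $K$.
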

See~\cite[Lemma~4.3]{SW} for a proof.
\begin{Lemma}\label{lem:qc3} Let $U$ be a Jordan domain and let $A$ be a measurable subset of $U$. Assume there is an open subset $W$ of $U$  with the following properties:
\begin{itemize}
\item $W$ is a disjoint union of finitely many Jordan domains;
\item for each component $T$ of $W$, $\partial T\cap A=\emptyset$ and $\mathcal{QD}(A\cap T,T)<\infty$;
\item $A\setminus W$ is compactly contained in $U$.
\end{itemize} 
Then $\mathcal{QD}(A,U)<\infty$.
\end{Lemma}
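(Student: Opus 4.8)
The plan is to reduce the statement to an application of the earlier facts by localizing. The hypotheses neatly split $A$ into two parts: the part $A\setminus W$ which is compactly contained in $U$, and the parts $A\cap T$ sitting inside the finitely many Jordan components $T$ of $W$, each of which has finite distortion \emph{inside} $T$ and is boundary-disjoint from $A$. So the first step is to observe that, by Lemma~\ref{compact}, $\mathcal{QD}(A\setminus W,U)<\infty$, since $A\setminus W$ is a compact subset of $U$. The essential content to be extracted is that the finitely many ``finite distortion inside $T$'' statements can be glued together with the compact part into one finite distortion statement for $(A,U)$.

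First I would fix a qc map $\varphi:U\to\varphi(U)$ with $\bar\partial\varphi=0$ a.e.\ outside $A$ (equivalently, by Lemma~\ref{qc2}, $\mathrm{Dil}(\varphi)\le 1$ a.e.\ outside $A$; I will keep track of a general bound $K$ to use Lemma~\ref{qc2} freely). Since $\partial T\cap A=\emptyset$ for each component $T$ of $W$, the restriction $\varphi|_{\overline T}$ is conformal on a neighborhood of $\partial T$ inside $\overline T$, and in particular $\varphi|_{\partial T}$ is a quasisymmetric (indeed real-analytic) parametrization of the Jordan curve $\varphi(\partial T)$. Applying the hypothesis $\mathcal{QD}(A\cap T,T)<\infty$ to $\varphi|_T$ yields, via Lemma~\ref{qc2}, a $K\,\mathcal{QD}(A\cap T,T)$-qc map $\varphi_T:T\to\varphi(T)$ agreeing with $\varphi$ on $\partial T$. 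Define $\psi:U\to\varphi(U)$ by setting $\psi=\varphi_T$ on each $T$ and $\psi=\varphi$ on $U\setminus W$. Because the two definitions match continuously on each $\partial T$ and the $\partial T$ are removable for qc maps, $\psi$ is a globally qc homeomorphism of $U$ onto $\varphi(U)$, equal to $\varphi$ on $\partial U$, with $\mathrm{Dil}(\psi)\le \max\bigl(K,\ K\max_T\mathcal{QD}(A\cap T,T)\bigr)=:K'$ on $W$ and $\mathrm{Dil}(\psi)\le K$ on $U\setminus W\supset U\setminus\overline{W}$; in particular $\psi$ is conformal a.e.\ outside the set $(A\setminus W)\cup\overline{W}$, which is a compact subset of $U$ (using that $W$, hence $\overline W$, is compactly contained in $U$).

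Now I would feed $\psi$ into Lemma~\ref{compact}: since $(A\setminus W)\cup\overline{W}$ is compact in $U$, we have $\mathcal{QD}\bigl((A\setminus W)\cup\overline W,\,U\bigr)=C_0<\infty$, and $\psi$ has dilatation bounded by $K'$ a.e.\ outside this compact set; by Lemma~\ref{qc2} there is a $K'C_0$-qc map $\widetilde\varphi:U\to\varphi(U)$ with $\widetilde\varphi=\psi=\varphi$ on $\partial U$. Since $K'=K\max(1,\max_T\mathcal{QD}(A\cap T,T))$ and there are only finitely many $T$, the constant $K'C_0$ depends only on $(A,U,W)$ and linearly on $K$; unwinding the definitions and using Lemma~\ref{qc2} once more in the reverse direction gives $\mathcal{QD}(A,U)\le C_0\cdot\max\bigl(1,\max_T\mathcal{QD}(A\cap T,T)\bigr)<\infty$, as desired.

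The step I expect to be the main obstacle is the gluing in the second paragraph: one must be careful that replacing $\varphi|_T$ by $\varphi_T$ produces a genuinely qc (not merely continuous and piecewise-qc) map on all of $U$. The point is that each $\partial T$ is a Jordan curve along which $\varphi$ is already conformal on one side (from $U\setminus W$) and the new map $\varphi_T$ matches $\varphi$ there, so $\partial T$ has zero area and is a removable set for the combined homeomorphism; this is exactly the ``welding along a nice curve'' type argument implicit in the proofs of Lemmas~\ref{qc1} and~\ref{qc2}. One should also verify that the finitely-many-components hypothesis is genuinely used — it guarantees the max of the $\mathcal{QD}(A\cap T,T)$ is finite and that $\overline W$ is still compactly contained in $U$ — and that no component of $W$ touches $\partial U$, which is part of ``$W$ is an open subset of $U$'' together with ``$A\setminus W$ compactly contained in $U$'' only after one notes $\overline W\subset U$; if the paper intends $W\Subset U$ this is immediate, otherwise one argues that the components of $W$ accumulating on $\partial U$ can be absorbed since $\partial T\cap A=\emptyset$ lets one treat $A\cap T$ together with the compact part directly.
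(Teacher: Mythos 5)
Your proof is correct and follows essentially the same route as the paper: correct $\varphi$ on each component $T$ using $\mathcal{QD}(A\cap T,T)<\infty$ and Lemma~\ref{qc2}, glue along the curves $\partial T$, and then dispose of the remaining compact part via Lemma~\ref{compact}/Lemma~\ref{qc2}. The only difference is that the paper sidesteps your worry about whether $\overline W\Subset U$ by observing that the glued map is uniformly $K'$-qc a.e.\ outside the set $A\setminus W$ itself (it is $K'$-qc everywhere on $W$ and conformal on $U\setminus(W\cup A)$), so Lemma~\ref{qc2} applies directly with the compact set $A\setminus W$ instead of $(A\setminus W)\cup\overline W$.
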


\begin{proof}
Let $\phi:U\to \phi(U)$ be a quasiconformal map with $\bar\partial \phi=0$ a.e. outside $A$. 
For each component  $T$ of $W$, there exists a $K_T$-qc map $\phi_T:T\to \phi(T)$ such that $\phi_T|_{\partial T}=\phi|_{\partial T}$ since  $\mathcal{QD}(A\cap T,T)<\infty$. Let $K=\max\limits_T K_T$. Replace $\phi$ by $\phi_T$ on each component $T$, we  get a qc map $\varphi$ which is $K$-qc a.e. outside $A\setminus W$ and $\varphi|_{\partial U}=\phi|_{\partial U}$. Since $A\setminus W$ is compactly contained in $U$, $\mathcal{QD}(A\setminus W, U)=K'<\infty$. By Lemma~\ref{qc2}, there exists a $KK'$-qc map with the same boundary value as $\phi$.
\end{proof}

We shall now recall the {\em recursively notched square model} developed in~\cite{Kahn}.
Let $S=(0,1)\times (-1/2,1/2)$. Let $\mathcal{I}$ denote the collection of the components of $(0,1)\setminus \mathcal{C}$, where $\mathcal{C}$ is the ternary Cantor set. Let
\begin{equation}\label{eqn:setN}
\mathcal{N}=\bigcup_{I\in\mathcal{I}} \overline{I}\times [-|I|/2, |I|/2]
\end{equation}
which is a countable disjoint union of closed squares.
The following is ~\cite[Lemma 2.1.1]{Kahn}:
\begin{Theorem}\label{thm:Kahn}
$\mathcal{QD}(\mathcal{N}, S)<\infty.$
\end{Theorem}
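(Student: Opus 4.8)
\textbf{Proof proposal for Theorem~\ref{thm:Kahn}.}

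The plan is to estimate $\mathcal{QD}(\mathcal N, S)$ directly via the recursive self-similar structure of the notched square, following Kahn's original scheme. First I would reduce to a one-step estimate. Write $\mathcal N = N_0 \cup \mathcal N^+ \cup \mathcal N^-$, where $N_0 = \overline{I_0}\times[-|I_0|/2,|I_0|/2]$ is the central square corresponding to the middle-third interval $I_0 = (1/3,2/3)$, and $\mathcal N^\pm$ are the parts of $\mathcal N$ lying over $(0,1/3)$ and $(2/3,1)$ respectively. The crucial observation is that $\mathcal N^+$ inside the subsquare $S^+ = (0,1/3)\times(-1/6,1/6)$ is, after the affine rescaling $z\mapsto 3z$, an exact copy of $(\mathcal N, S)$ (and likewise for $\mathcal N^-$). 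So I would try to set up an inequality of the form
\[
\mathcal{QD}(\mathcal N, S) \le \Psi\bigl(\mathcal{QD}(\mathcal N, S)\bigr)
\]
for a suitable function $\Psi$, and then argue that this forces a finite bound.

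The second step is to produce, given a qc map $\varphi : S\to\varphi(S)$ with $\bar\partial\varphi = 0$ a.e. outside $\mathcal N$, a controlled replacement. I would first straighten $\varphi$ on the two subsquares $S^\pm$: by the self-similarity and the inductive/fixed-point hypothesis that $\mathcal{QD}(\mathcal N, S)$ is some value $K_\infty$, there exist $K_\infty$-qc maps agreeing with $\varphi$ on $\partial S^\pm$ and modifying $\varphi$ only inside $S^\pm$. The geometric point to check is that $\partial S^\pm$ meets $\mathcal N$ only along the boundary of the central square $N_0$ (i.e.\ the squares $\overline I \times [-|I|/2,|I|/2]$ for $I\subsetneq(0,1/3)$ are compactly contained in $S^+$), so this replacement is legitimate in the sense of Lemma~\ref{lem:qc3}. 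After this replacement the new map $\tilde\varphi$ is $K_\infty$-qc everywhere except on $N_0 \cup (S\setminus(S^+\cup S^-))$; but $N_0$ together with the closed region $S\setminus(S^+\cup S^-)$ is a compact subset of $S$, so by Lemma~\ref{compact} (\cite[Fact~1.3.6]{Kahn}) and Lemma~\ref{qc2} we can further replace $\tilde\varphi$ by a $K_\infty\cdot C_0$-qc map with the same boundary values, where $C_0 = \mathcal{QD}\bigl(N_0\cup(S\setminus(S^+\cup S^-)),\,S\bigr)<\infty$ is a fixed constant independent of $\varphi$.

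The third step is to close the recursion. The two previous steps show $\mathcal{QD}(\mathcal N, S) \le C_0 \cdot \mathcal{QD}(\mathcal N, S)$, which is vacuous as stated; the actual argument must be quantitative rather than a bare fixed-point statement. The honest route, which is the one I expect to carry out, is to truncate: let $\mathcal N_m$ be the union over intervals $I\in\mathcal I$ of generation $\le m$ of the squares $\overline I\times[-|I|/2,|I|/2]$, together with (the closure of) all squares of generation $>m$ lumped into their level-$m$ ancestor region. Then $\mathcal N_m$ is compact, so $K_m := \mathcal{QD}(\mathcal N_m, S)<\infty$ by Lemma~\ref{compact}, and the one-step surgery above gives a recursion $K_{m} \le C_0 \cdot K_{m-1}^{\,?}$ — and here is exactly where the main obstacle lies: a naive bound yields $K_m \le C_0 K_{m-1}$, which blows up geometrically. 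Kahn's insight is that the modifications on $S^+$ and $S^-$ are supported on \emph{disjoint} sets, so their dilatations do not multiply; one only pays the fixed factor $C_0$ coming from the central square and the thin "frame" region, \emph{once}, independently of depth. Making this precise — that passing from depth $m-1$ to depth $m$ costs an \emph{additive} rather than multiplicative amount in $\log K$, or better, that the surgery on the self-similar pieces can be done with dilatation bounded by $\max$ rather than product — is the heart of the proof and the step I would spend the most care on. Once that is in hand, $K_m$ is bounded uniformly in $m$; letting $m\to\infty$ and using that any qc map with $\bar\partial\varphi=0$ outside $\mathcal N$ also has $\bar\partial\varphi=0$ outside $\mathcal N_m$ up to a set we can absorb, together with a normal-families/compactness argument for the resulting uniformly qc maps, yields $\mathcal{QD}(\mathcal N, S)\le \sup_m K_m<\infty$. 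I would simply cite \cite[Lemma~2.1.1]{Kahn} for the quantitative bookkeeping and reproduce only the parts needed for the present notation, since the statement is quoted verbatim from there.
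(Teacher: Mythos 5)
This statement is not proved in the paper at all: Theorem~\ref{thm:Kahn} is quoted verbatim from \cite[Lemma~2.1.1]{Kahn}, so the paper's ``proof'' is the citation, and your closing sentence (``I would simply cite \cite[Lemma~2.1.1]{Kahn}'') is in that sense the same approach. The issue is that everything you write before that sentence does not actually constitute an alternative proof, because the gap you yourself flag is the entire content of the lemma. Your surgery scheme gives, at best, the recursion $K_m\le C_0\,K_{m-1}$: after straightening $\varphi$ on $S^{\pm}$ using the depth-$(m-1)$ bound, the resulting map is $K_{m-1}$-qc on $S^{+}\cup S^{-}$ and uncontrolled on the compact central region, and Lemma~\ref{qc2} combined with Lemma~\ref{compact} then multiplies the constants. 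The assertion that ``one only pays the fixed factor $C_0$ once, independently of depth'' is not a bookkeeping refinement one can defer --- it is false for this scheme as set up, because the modification on $S^{+}$ produced by the inductive step is \emph{not} supported only on $\mathcal N^{+}$ (it is merely $K_{m-1}$-qc there), so at the next generation up you cannot treat $S^{+}$ as a region where no further dilatation is incurred. Disjointness of supports lets you take a maximum only when the competing maps are each conformal off their own piece; here the inner replacement destroys conformality on all of $S^{\pm}$, and the recursion genuinely compounds.

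Two smaller points. First, $\partial S^{+}$ is not disjoint from $\mathcal N$: the right edge $\{1/3\}\times[-1/6,1/6]$ of $S^{+}$ lies in $N_0$, and $\mathcal N^{+}$ accumulates on the boundary points $(0,0)$ and $(1/3,0)$, so the hypotheses of Lemma~\ref{lem:qc3} are not literally satisfied for $W=S^{+}\cup S^{-}$ (this is repairable since the overlap has measure zero, but it should be said). Second, the actual argument in \cite{Kahn} does not close a self-similar recursion of this kind; it controls the boundary values of $\varphi$ directly by extremal-length estimates exploiting the definite conformal buffers above and below each notch, and then extends. Since reproving the lemma is outside the scope of the paper, the correct course is to state it as an imported result with the citation, exactly as the paper does, and drop the sketch.
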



\subsection{Reduce to the toy model}
In the following we write $\mathcal{R}(\theta)$ for $\mathcal{R}_{f_0}(\theta)$. A {\em combinatorial ray-pair} is, by definition, a pair $(\theta,\tilde\theta)$ such that $\theta\ne\tilde\theta$ and $\theta\sim \tilde\theta$. The union  $\overline{\mathcal R(\theta)}\cup \overline{\mathcal R(\tilde\theta)}$ is called the {\em geometric ray-pair} with respect to $(\theta,\tilde\theta)$ and we denote it by $\mathcal R(\theta,\tilde\theta)$.
By a {\em slice}  we mean  an open set $S(\theta_1,\theta_2,\tilde\theta_2,\tilde\theta_1)$ bounded by two disjoint geometric ray-pairs $\mathcal R(\theta_i,\tilde\theta_i)$, $i=1,2$ such that $\theta_1,\theta_2,\tilde\theta_2,\tilde\theta_1$ lie in $\mathbb R/\mathbb Z$ in the anticlockwise order.

Let $\tau_0$ be given by Theorem~\ref{thm:puzzle}, the external rays landing at $\tau_0$ cut the complex plane $\mathbb C$ into finitely many sectors $S_0,\cdots,S_{q-1}$  where $q$ is the number of external rays landing at $\tau_0$. Without loss of generality, we assume $0\in S_0$.  The boundary of  $S_0$ is a geometric ray-pair:
there exists $\theta^-,\theta^+\in \mathbb R/\mathbb Z$ such that $\partial S_0=\mathcal R(\theta^-,\theta^+).$ By Proposition~\ref{prop:noncrpiece}, for $n$ sufficiently large, there exists a unique puzzle piece $V_n$ lying in $S_0$ such that  $\tau_0\in  \partial V_n$ and $0\notin V_n$. The boundary of $V_n$ consists of portions of an equipotential curve and a finite set of geometric ray-pairs, cut off at that potential height. Among these geometric ray-pairs, there exists a unique ray-pair $\mathcal R(t_n,s_n)$ which separates $\tau_0$ from $0$. Let ${S}(n)$ be the slice bounded by $\mathcal{R}(t_n, s_n)$ and $\partial S_0$. Let $\widehat{S}(n)=\{z\in {S}(n): G_{f_0}(z)<(d_1+d_2)^{-n}\}$.  Without loss of generality, we assume $\theta^-,t_n,s_n,\theta^+$ lie in $\mathbb R/\mathbb Z$ in the anticlockwise order. The following lemma is a consequence of Proposition~\ref{prop:noncrpiece}.
\begin{Lemma}\label{lem:slice}$\text{diam}(\widehat{S}(n))\to 0$ as $n\to \infty$.
\end{Lemma}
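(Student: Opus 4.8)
\textbf{Proof proposal for Lemma~\ref{lem:slice}.}

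The plan is to reduce the estimate $\mathrm{diam}(\widehat{S}(n))\to 0$ to the shrinking of puzzle pieces established in Proposition~\ref{prop:noncrpiece} and Corollary~\ref{coro:shrinking}, together with a compactness argument near $\tau_0$. First I would observe that $\widehat{S}(n)$ is the region bounded by the geometric ray-pair $\mathcal R(t_n,s_n)$, the geometric ray-pair $\partial S_0=\mathcal R(\theta^-,\theta^+)$, and the arc of the equipotential $E_{f_0}((d_1+d_2)^{-n})$ between them. Since both boundary ray-pairs land on $\tau_0$ (the pair $\mathcal R(t_n,s_n)$ being one of the ray-pairs in $\partial V_n$ passing through $\tau_0$), the set $\overline{\widehat{S}(n)}$ contains $\tau_0$ on its boundary, and as $n\to\infty$ the equipotential arc retracts toward $J_{f_0}$. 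So the diameter is controlled by two contributions: the size of the landing region of the external angles involved, and the size of the portion of $\widehat{S}(n)$ near $\tau_0$.

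Next I would pin down the angles $t_n$ and $s_n$. The ray-pair $\mathcal R(t_n,s_n)\subset \partial V_n$ separates $\tau_0$ from $0$, and by construction $V_n$ is the puzzle piece in $S_0$ attaching $\tau_0$ with $0\notin V_n$; hence $(t_n,s_n)$ is a component of $\mathbb R/\mathbb Z\setminus \Theta(\tau_0)$ containing no angles of rays bounding $V_n$ on the side away from $\tau_0$ — more precisely, $t_n\to\theta^-$ or $s_n\to\theta^+$ along the relevant side as $n$ grows, by Corollary~\ref{coro:shrinking} applied to $z=\tau_0$: the monotone sequences $\{a_n\},\{b_n\}$ accumulating on the endpoints of each gap of $\Theta(\tau_0)$ give precisely the angles of the separating ray-pairs of deeper puzzle pieces. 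Thus the external angles $t_n,s_n$ converge to the endpoints $\theta^-,\theta^+$ of the gap of $\Theta(\tau_0)$ containing the angle $0$ of the ray through $0$ — or, if there is more than one gap, to the endpoints of the relevant gap; in any case the arcs of angle shrink and $\mathcal R(t_n), \mathcal R(s_n)$ converge (in the Hausdorff sense, off a neighborhood of $\tau_0$) to subsets of $\mathcal R(\theta^-)\cup\mathcal R(\theta^+)$.

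Then I would split $\widehat{S}(n)$ into a piece near $\tau_0$ and a piece away from it. Fix $\varepsilon>0$. Choose a small round disk $D=D(\tau_0,\varepsilon)$. Outside $D$, the rays $\mathcal R(\theta^-),\mathcal R(\theta^+)$ are honest smooth arcs, and the part of $\widehat{S}(n)$ outside $D$ lies between $\mathcal R(t_n),\mathcal R(s_n)$ (both $C^0$-close to $\mathcal R(\theta^\pm)$ outside $D$ for $n$ large, by continuity of $\phi_{f_0}^{-1}$ on compact subsets of $|z|>1$) and below the equipotential of height $(d_1+d_2)^{-n}$; since $\mathcal R(\theta^-)$ and $\mathcal R(\theta^+)$ both land at the single point $\tau_0\in D$, these two arcs outside $D$ are contained in an $o(1)$-neighborhood of a single arc as the potential height $\to 0$, forcing the outside-$D$ part of $\widehat{S}(n)$ to have diameter $\le C\varepsilon$ for $n$ large. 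Inside $D$, trivially the contribution to the diameter is $\le 2\varepsilon$. Combining, $\mathrm{diam}(\widehat{S}(n))\le (C+2)\varepsilon$ for $n$ large, and letting $\varepsilon\to 0$ finishes the proof. Alternatively — and this is cleaner — I would note that $\widehat{S}(n)$ is contained in the puzzle piece $V_m$ for every $m$ with $m$ bounded by a function of $n$ (since $\widehat{S}(n)$ is cut off below potential $(d_1+d_2)^{-n}$ and lies between $\tau_0$'s rays and $\mathcal R(t_n,s_n)$, it sits inside the depth-$n$ puzzle piece attaching $\tau_0$ on the $0$-side complement), so $\mathrm{diam}(\widehat{S}(n))\le \sup\{\mathrm{diam}(Y): Y \text{ a puzzle piece of depth } n,\ \tau_0\in\overline Y\}\to 0$ directly by Proposition~\ref{prop:noncrpiece} applied to $z=\tau_0\in Z_0\subset\bigcup_j f_0^{-j}(Z_0)$.

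The main obstacle is the interface near $\tau_0$: because $f_0$ has a free Julia critical point $c_0$ that may accumulate on $\partial V_n$, one cannot blithely assert that $V_n$ (or $\widehat{S}(n)$) is a puzzle piece with controlled geometry, so the argument has to route through the abstract shrinking statement of Proposition~\ref{prop:noncrpiece} (equivalently Corollary~\ref{coro:shrinking}) rather than through any direct univalent-pullback estimate; verifying that $\widehat{S}(n)$ is genuinely trapped inside an arbitrarily deep puzzle piece at $\tau_0$ is the one bookkeeping point that needs care.
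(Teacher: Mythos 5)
Both versions of your argument founder on the same point: the slice $S(n)$ is bounded by only \emph{two} geometric ray-pairs ($\partial S_0$ and $\mathcal R(t_n,s_n)$), whereas $\partial V_n$ in general carries further ray-pairs, landing at points of $f_0^{-n}(Z_0)$ other than $\tau_0$ and the landing point $w_n$ of $\mathcal R(t_n,s_n)$. The depth-$n$ puzzle pieces on the far side of those extra ray-pairs are still contained in $S(n)$ (their boundaries lie in $\Gamma_n$ and are disjoint from $\partial S(n)$), so $\widehat S(n)$ contains, besides $\overline{V_n}\cap\{G_{f_0}<d^{-n}\}$, a whole tree of decorations attached to $\partial V_n$ at points that vary with $n$ and whose closures do \emph{not} contain $\tau_0$. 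Hence your ``cleaner alternative'' inequality $\mathrm{diam}(\widehat S(n))\le\sup\{\mathrm{diam}(Y):\tau_0\in\overline Y\}$ is simply not available: Proposition~\ref{prop:noncrpiece} applied at $z=\tau_0$ says nothing about these pieces. The $\varepsilon$-disk argument hides the same gap: the part of $\widehat S(n)$ outside $D$ contains $K(f_0)\cap S(n)\setminus D$, and a region \emph{bounded by} two ray-pairs with nearby angles need not be small --- showing that the chunk of the filled Julia set trapped in the slice shrinks \emph{is} the content of the lemma. Moreover ``continuity of $\phi_{f_0}^{-1}$ on compact subsets of $|z|>1$'' controls the rays only at potentials bounded away from $0$ and says nothing near their landing points; and the two fixed arcs $\mathcal R(\theta^-),\mathcal R(\theta^+)$ are certainly not within $o(1)$ of a single arc outside $D$.

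What actually closes the gap is the local connectivity of $J_{f_0}$ (available from \cite{KS}, already used in Lemma~\ref{lem:puzzle0}), i.e.\ the continuous extension of $\phi_{f_0}^{-1}$ to $\{|z|\ge1\}$ and the resulting uniformly continuous Carath\'eodory loop $\gamma(t)=\lim_{r\to1}\phi_{f_0}^{-1}(re^{2\pi it})$. First, $w_n\in\overline{V_n}$, so $w_n\to\tau_0$ by Proposition~\ref{prop:noncrpiece}; combined with continuity of $\gamma$ and $\Theta(\tau_0)\cap[\theta^-,\theta^+]=\{\theta^-,\theta^+\}$ (or with Corollary~\ref{coro:shrinking}, as you suggest) this gives $t_n\to\theta^-$, $s_n\to\theta^+$. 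Next, every point of $J_{f_0}\cap\overline{S(n)}$ is the landing point of some ray whose angle lies in $[\theta^-,t_n]\cup[s_n,\theta^+]$ (a ray landing in the open slice cannot cross $\partial S(n)$), so $J_{f_0}\cap\overline{S(n)}\subset\gamma([\theta^-,t_n]\cup[s_n,\theta^+])$, which shrinks to $\{\tau_0\}$ by uniform continuity of $\gamma$; any bounded Fatou component meeting $S(n)$ lies entirely in $S(n)$ with boundary in this set, hence is also small; and the part of $\widehat S(n)$ in the basin of infinity is $\phi_{f_0}^{-1}\bigl(\{re^{2\pi it}:1<r<e^{d^{-n}},\ t\in(\theta^-,t_n)\cup(s_n,\theta^+)\}\bigr)$, which shrinks to $\{\tau_0\}$ for the same reason. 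These three pieces exhaust $\widehat S(n)$ and in particular account for the decorations that both of your arguments miss.
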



Let $p$ be the period of $\theta^+$. By Lemma~\ref{lem:slice},  we can choose $N_1$ sufficiently large so that the closure of $\widehat S:=\widehat S(N_1)$ is disjoint from the set $\bigcup_{j=0}^{p} f^j_0(\{0,c_0\})$. Thus we can define a single-valued univalent branch $g$ of $f^{-p}_0$ on a neighborhood of $\widehat S(N_1)$ such that $g(\tau_0)=\tau_0$.

Let $\tau'_0$ be the landing point of the external ray $\mathcal R(t_{N_1})$ and let $r$ be the first moment for which $f^r_0(R(t_{N_1},s_{N_1}))=\partial S_0$. Note that $g^k(\widehat S(N_1)) \subset \widehat S(N_1+kp)$ for all $k\in \mathbb N$. By  Lemma~\ref{lem:slice},  the closure of $\widehat S(N_1+kp)$ is disjoint from the set $\bigcup_{j=0}^{r} f^j_0(\{0,c_0\}$ for all $k$ large. Thus we can define a single-valued univalent branch $h_k$ of $f^{-r}_0$ on a neighborhood of $\widehat S(N_1+kp)$ such that $h_k(\tau_0)=\tau'_0$ for all $k$ large. Let $\widehat A=g(\widehat S(N_1))$ and $\widehat B_k=h_k\circ g^{kp}(\widehat S(N_1))$. Note that $A$ and $B_k$ are quasidisks. It follows from  Lemma~\ref{lem:slice} that there exists $k_0$ such that $\overline {\widehat A}\cap \overline{\widehat B_k}=\emptyset$ for all $k\ge k_0$.  Let $\widehat B=\widehat B_{k_0}$ and $h=h_{k_0}$. Define a map $\widehat F:\widehat A\cup \widehat B \to \widehat S$ with $\widehat F|_{\widehat A}=f^p_0$ and $\widehat F|_{\widehat B}=f^{r+k_0p}_0$.

Without loss generality, we assume $\widehat A$ and $\widehat B$ are truncations of slices $S(\theta^-,t,s,\theta^+)$ and $S(t_{N_1},t',s',s_{N_1})$ by equipotential curves respectively for some $t,s,t',s' \in \R/\Z$. Moreover, we may assume $f^r_0(\mathcal R(t_{N_1}))=\mathcal R(\theta^-)$. For any $\eta\in \mathbb R/\mathbb Z$ and $\kappa>0$, let $\xi(\eta, \kappa)$ denote the intersection point of the external ray $\mathcal R(\eta)$ and the equipotential curve of height $\kappa$.  Let $\Phi:\mathbb  H_+\cap \{z\mid 0<\Im z<1\}\to \C\setminus K(f_0)$ be a map defined as $\Phi(z)=\phi^{-1}_{f_0}(e^{2\pi i z})$ where $\mathbb H_+$ is the right half-plane and $\phi_{f_0}$ is the B\"ottcher coordinate of $f_0$. Let $\gamma_-$ and $\gamma_+$ be the  line segment with endpoints $\{\Phi^{-1}(\xi(t, d^{-N_1-p})), \Phi^{-1}(\xi(t',d^{-N_1-k_0p-r}))\}$ and $\{\Phi^{-1}(\xi(s, d^{-N_1-p})), \Phi^{-1}(\xi(s',d^{-N_1-k_0p-r}))\}$ respectively, where $d=d_1+d_2$. Let $\widehat C$ be the closure of  the bounded component of $\C\setminus(\overline A\cup\overline B\cup \Phi(\gamma_-)\cup\Phi(\gamma_+))$.

Let $A=(0,1/3)\times (-1/6, 1/6)$, $B=(2/3, 1)\times (-1/6,1/6)$, $C=[1/3,2/3]\times [-1/6, 1/6]$ and $S=(0,1)\times (-1/2,1/2)$ and define a map
$$F: A\cup B \to S,$$
as follows:
$$F(z)=\left\{\begin{array}{ll}
3z, &\mbox{ if } z\in A;\\
 3(1-z), &\mbox{ if } z\in B.
\end{array}
\right.
$$

\begin{Lemma}\label{lem:qc homeo}
There is a qc homeomorphism
$H:S\to  \widehat S$ such that
\begin{enumerate}
\item[(i)] $H(A)=\widehat A, H(B)=\widehat B, H(C)=\widehat C,$
\item[(ii)] $H\circ F=\widehat F\circ H$ holds on $ \overline {A\cup B}$.
\end{enumerate}
\end{Lemma}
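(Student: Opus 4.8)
The plan is to build the homeomorphism $H$ in stages, matching the combinatorial/geometric pieces of the recursively notched square model $S$ to the dynamically defined pieces $\widehat S, \widehat A, \widehat B, \widehat C$. First I would observe that all four sets $\widehat S$, $\widehat A$, $\widehat B$, $\widehat C$ are quasidisks: $\widehat S$ is a truncation of a slice by an equipotential curve, $\widehat A$ and $\widehat B$ are such truncations by construction, and $\widehat C$ was defined precisely as the closure of the remaining bounded component, whose boundary consists of finitely many arcs of external rays, equipotential curves, and the straight segments $\Phi(\gamma_\pm)$. Hence each of $\partial\widehat A,\partial\widehat B,\partial\widehat C$ is a Jordan curve, and the same holds for the Euclidean rectangles $A,B,C,S$. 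So the first step is to record that $\overline{\widehat A}, \overline{\widehat B}, \overline{\widehat C}$ are pairwise disjoint except along shared boundary arcs, partition $\overline{\widehat S}$, and meet each other exactly the way $\overline A,\overline B,\overline C$ partition $\overline S$ (i.e. $\widehat C$ separates $\widehat A$ from $\widehat B$, and $\Phi(\gamma_-),\Phi(\gamma_+)$ play the role of the two vertical segments $\{1/3\}\times[-1/6,1/6]$, $\{2/3\}\times[-1/6,1/6]$).

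Next I would construct $H$ on the three closed pieces separately and then glue. Define $H$ first on the relevant boundary arcs of $\partial A$ using the dynamics: the two external-ray boundary arcs of $\widehat A$ and the corresponding vertical sides of $A$ should be parametrized so that $H$ intertwines the two "doubling-type" maps; concretely, since $F|_A=3z$ is affine and $\widehat F|_{\widehat A}=f_0^p$ is a univalent proper map onto $\widehat S$, one defines $H$ on the outer (equipotential) arc of $\partial A$ by any convenient homeomorphism onto the outer arc of $\partial\widehat A$, and then \emph{propagates} this definition along the dynamics so that condition (ii), $H\circ F=\widehat F\circ H$, is forced on $\overline{A}\cup\overline{B}$. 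Using the B\"ottcher coordinate $\Phi$ (which linearizes external rays and equipotentials) one transports the affine structure of $A$ to $\widehat A$, giving a qc map $H|_{\overline A}:\overline A\to\overline{\widehat A}$; the same for $H|_{\overline B}:\overline B\to\overline{\widehat B}$, with the boundary identifications along $\Phi(\gamma_\pm)$ chosen to agree with the values dictated on $\partial\widehat A$ and $\partial\widehat B$. Finally, on the central square $C$, one already has prescribed boundary values on $\partial C$ (coming from $H|_{\overline A}$, $H|_{\overline B}$, and the segments $\gamma_\pm$); since $\partial\widehat C$ is a Jordan curve and the prescribed boundary map $\partial C\to\partial\widehat C$ is a quasisymmetric homeomorphism, it extends to a qc homeomorphism $H|_{\overline C}:\overline C\to\overline{\widehat C}$ (e.g. by the Beurling--Ahlfors type extension for quasidisks, or by first straightening both quasidisks to the unit disk). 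Gluing the three pieces along their common boundary arcs yields a homeomorphism $H:S\to\widehat S$ satisfying (i); qc-ness of the glued map follows because each piece is qc and they agree continuously on the (finitely many, rectifiable) seam arcs, so no mass is concentrated on the seams.

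The main obstacle I expect is ensuring the \emph{compatibility} of the boundary parametrizations: the equivariance requirement (ii) on $\overline{A\cup B}$ fixes $H$ on a dense set of boundary arcs of $\widehat A$ and $\widehat B$ (the preimages of the outer equipotential under the dynamics accumulate on $\tau_0$ and $\tau_0'$), and one must check that the resulting boundary values extend continuously — in particular that they behave quasisymmetrically near the fixed points $\tau_0,\tau_0'$ where $\widehat A$ and $\widehat B$ are "cusped" against the slice boundary. This is exactly where the careful choice in Lemma~\ref{lem:slice} and the construction of $\widehat S(N_1)$ (disjointness from the postcritical orbits, so that $g$ and $h_k$ are genuine univalent branches) pays off: univalence of $g=f_0^{-p}$ near $\widehat S$ guarantees that the transported structure has bounded distortion up to the fixed point, so the boundary map is quasisymmetric and the extension across $C$ is legitimate. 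Once this compatibility is in hand, the gluing and the verification of (i)--(ii) are routine. I would also remark that $H$ need not be canonical; any qc $H$ with properties (i)--(ii) suffices for the subsequent application (pulling back Kahn's estimate in Theorem~\ref{thm:Kahn} via Lemma~\ref{qc1}).
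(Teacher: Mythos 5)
Your construction correctly assembles the ingredients for the \emph{initial} map (what the paper calls $H_0$): the pieces $\widehat A,\widehat B,\widehat C,\widehat S$ are quasidisks, a boundary correspondence can be written down in the B\"ottcher coordinate, and a Beurling--Ahlfors type extension gives a global qc map satisfying (i) and satisfying (ii) on $\partial A\cup\partial B$. But there is a genuine gap in passing from this to a map satisfying (ii) on all of $\overline{A\cup B}$. The equation $H\circ F=\widehat F\circ H$ on $A$ reads $H|_A=(\widehat F|_{\widehat A})^{-1}\circ H|_{S}\circ F|_A$, and since $F$ maps $A$ onto the whole of $S$, the values of $H$ on $A$ are coupled to the values of $H$ on $C$, on $S\setminus \overline{A\cup B\cup C}$, and on $A\cup B$ themselves. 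Your order of construction --- first $H|_{\overline A}$ and $H|_{\overline B}$ by ``transporting the affine structure via the B\"ottcher coordinate'', then $H|_{\overline C}$ by a free qc extension of prescribed boundary values --- cannot respect this coupling: once $H$ is fixed on $A$, equation (ii) forces $H$ on $C=F\bigl(A\cap F^{-1}(C)\bigr)$ to equal $\widehat F\circ H\circ (F|_A)^{-1}$, which an arbitrary extension of the boundary values of $\partial C$ will not produce. Moreover the B\"ottcher coordinate is not defined on all of $\widehat A$: the slice $\widehat A$ contains points of $J_{f_0}$ (it must, since the whole purpose of the model is to capture the landing domains $L_N\cap\widehat S$), so ``transporting the affine structure'' does not actually define a map on $\overline A$.

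What is missing is the pull-back iteration that resolves the self-referential nature of (ii). Starting from $H_0$, set $H_{n+1}=H_n$ on $S\setminus(A\cup B)$ and define $H_{n+1}$ on $A\cup B$ by lifting, i.e. $H_n\circ F=\widehat F\circ H_{n+1}$ there, the branches being pinned down by the boundary matching. Since $F$ and $\widehat F$ are holomorphic on $A\cup B$, every $H_n$ has the same maximal dilatation as $H_0$; since $F$ is uniformly expanding, the maps stabilize on the dense set of points whose $F$-orbit eventually leaves $A\cup B$, and hence converge to a qc limit $H$ satisfying (i) and (ii). (Equivalently one may define $H$ on $S\setminus(A\cup B)$ first and recurse through the preimages $F^{-n}(S\setminus(A\cup B))$, extending across the residual Cantor set at the end; in either form the recursion must proceed from $C$ and the complement of $A\cup B$ \emph{into} $A$ and $B$, not the other way around.) Your closing remarks on quasisymmetry near $\tau_0$ and $\tau_0'$ concern a real but secondary point; the primary missing step is this iteration.
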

\begin{proof}

We claim that there is qc map $H_0:S\to  \widehat S$ such that (i) holds and (ii) holds on $\partial A\cup \partial B$ (with $H$ replaced by $H_0$). Indeed, one can easiy construct a homeomorphism  with the desired properties by using B\"ottcher coordinate. It can be made global qc by Beurling-Ahlfors extension since $\partial S, \partial A, \partial B, \partial C$ are all quasi-circles.

Now we can construct inductively a sequence $\{H_n\}_{n=0}^\infty$ of qc maps by pull-back which has the following properties:
\begin{itemize}
\item $H_{n+1}=H_n$ on $S \setminus (A\cup B);$
\item $H_{n}\circ F=\widehat F\circ H_{n+1}$ holds on $\overline {A\cup B}$.
\end{itemize}
These maps $H_n$ have the same maximal dilatation as $H_0$, and they eventually stablize for any point in the set $X=\{z\in  S: F^n(z)\not\in \bigcup A\cup B\mbox{ for some }n\}$. Since $F$ is uniformly expanding, the set $X$ is dense in $S$, it follows that $H_n$ converges to a qc map $H$ which satisfies the requirements.

\end{proof}

 Let
$\widehat{\mathcal{N}}=\{z\in \widehat S: \exists n\ge 1\text{ such that }\widehat F^n(z) \text{ is well-defined and belongs to}~~\widehat C\}.$  Note that $H(\mathcal{N})=\widehat{\mathcal{N}}$,  where $\mathcal{N}$ is as in (\ref{eqn:setN}).  The following Lemma follows immediately by Lemma~\ref{lem:qc homeo}, Theorem~\ref{thm:Kahn} and Lemma~\ref{qc1}.
\begin{Lemma} $\mathcal{QD}(\widehat{\mathcal{N}}, \widehat S)<\infty.$
\end{Lemma}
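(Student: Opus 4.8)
The plan is to transport Kahn's distortion bound for the recursively notched square model to the dynamically defined pair $(\widehat{\mathcal N},\widehat S)$ by means of the quasiconformal conjugacy $H$ produced in Lemma~\ref{lem:qc homeo}. The two ingredients are Theorem~\ref{thm:Kahn}, which gives $\mathcal{QD}(\mathcal N,S)<\infty$ for the explicit set $\mathcal N$ of (\ref{eqn:setN}), and Lemma~\ref{qc1}, the quasi-invariance statement: if $g\colon U\to V$ is $L$-qc and $\mathcal{QD}(A,U)<\infty$, then $\mathcal{QD}(g(A),V)\le L^2\,\mathcal{QD}(A,U)$.

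First I would identify $\mathcal N$ with the set of points of $S$ whose $F$-orbit eventually lands in the central square $C$. Recall that in the recursively notched square the component $I=(1/3,2/3)$ of $(0,1)\setminus\mathcal C$ is precisely the base of $C$, and every other component of $(0,1)\setminus\mathcal C$ is obtained from $I$ by finitely many inverse branches of the expanding map $F\colon A\cup B\to S$; correspondingly each closed square $\overline I\times[-|I|/2,|I|/2]$ occurring in $\mathcal N$ is an iterated $F$-preimage of $C$. Hence $\mathcal N=\bigcup_{n\ge 0}F^{-n}(C)\cap S$ up to a set of measure zero, which is irrelevant for $\mathcal{QD}$. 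This is exactly the analogue, for the pair $(F,C)$, of the set $\widehat{\mathcal N}$ defined for $(\widehat F,\widehat C)$.

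Next, since by Lemma~\ref{lem:qc homeo} the homeomorphism $H$ satisfies $H(C)=\widehat C$ and $H\circ F=\widehat F\circ H$ on $\overline{A\cup B}$, an induction on $n$ gives $H(F^{-n}(C))=\widehat F^{-n}(\widehat C)$ for every $n$, and therefore $H(\mathcal N)=\widehat{\mathcal N}$; this identification is essentially built into the construction of $H$. Finally, letting $L$ denote the maximal dilatation of $H$ (finite by Lemma~\ref{lem:qc homeo}), applying Lemma~\ref{qc1} with $g=H\colon S\to\widehat S$ and $A=\mathcal N$ together with Theorem~\ref{thm:Kahn} yields
\[
\mathcal{QD}(\widehat{\mathcal N},\widehat S)=\mathcal{QD}(H(\mathcal N),\widehat S)\le L^2\,\mathcal{QD}(\mathcal N,S)<\infty,
\]
which is the assertion.

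The only point requiring attention is the combinatorial identification of $\mathcal N$ with the escaping set $\bigcup_{n}F^{-n}(C)$ of the toy dynamics; this is standard for Kahn's model, and once it is in hand the remainder is the one-line transport argument above, so there is no genuine obstacle here.
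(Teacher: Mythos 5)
Your argument is correct and is exactly the paper's proof: the paper likewise observes $H(\mathcal N)=\widehat{\mathcal N}$ and then cites Lemma~\ref{lem:qc homeo}, Theorem~\ref{thm:Kahn} and Lemma~\ref{qc1} to conclude. The only difference is that you spell out the identification of $\mathcal N$ with the iterated $F$-preimages of $C$ and the transport under $H$, which the paper leaves implicit.
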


\begin{proof}[Proof of Theorem~\ref{thm:Kqcpuzzle'}]
For each $z\in \partial Y\cap J(f_0)$, there exists a unique geometric ray-pair $ \Gamma(z)\ni z$ such that each componet of $ \Gamma(z) \setminus \{z\}$ intesects $\overline Y$. Let $m(z)$ be the first moment for which $f^{m(z)}_0(\Gamma(z))=\partial S_0=\mathcal R(\theta^-,\theta^+)$. Recall that  $g$ is a single-valued univalent branch  of $f^{-p}_0$ on a neighborhood of $\widehat S(N_1)$ such that $g(\tau_0)=\tau_0$ and $h=h_{k_0}$ is a single-valued univalent branch  of $f^{-r}_0$ on a neighborhood of $\widehat S(N_1+k_0p)$ such that $h(\tau_0)=\tau'_0$.  As $g^k(\widehat S(N_1))$ shrinks to $\tau_0$ as $k$ tends to infinity, there exists $k_1=k_1(z)>k_0$ large enough so that $\overline{g^k(\widehat S(N_1))}$ is disjoint from $\bigcup_{j=0}^{m}f^j_0(\{0,c_0\})$. So we can define a  single-valued univalent branch $\psi$ of $f^{-m(z)}_0$ on a neighborhood of $\overline{g^{k_1}(\widehat S(N_1))}$ such that $\psi(\tau_0)=z$.

Take $N(z)=3(m(z)+N_1+k_1p)$ and let $W=W(z):=\psi\circ g^{k_1}(\widehat S(N_1))$. Then any landing domain of $Y^{(N(z))}$ does not intersect $\partial W$ and $\partial A \cup \partial B$. Therefore, $L_{N(z)}\cap \widehat S \subset \widehat{\mathcal{N}}$, which implies $$\mathcal{QD}(L_{N(z)}\cap \widehat S, \widehat S)\le \mathcal{QD}(\widehat{\mathcal{N}}, \widehat S)=Q<\infty.$$
Note that $f_0^{m(z)+k_1p}$ maps a neighborhood of $W$ conformally onto a neighborhood of $\widehat S$ and $f_0^{m(z)+k_1p}(L_{N(z)}\cap W ) \subset L_{N(z)}\cap \widehat S$.  For otherwise there exists a moment $0\le T<m(z)+k_1p$ such that $f^T_0(W)$ contains a critical point. This contradicts that $f^{m(z)+k_1p}_0|_W$ is a conformal map. Therefore, by  Lemma~\ref{qc1},  
\[\mathcal{QD}(L_{N(z)}\cap W, W) \le \mathcal{QD}(L_{N(z)}\cap \widehat S, \widehat S)<Q.\]

Let $E:=\{z\mid z\in \partial Y\cap J(f_0)\}$. Take $N=\max\{ N(z)\mid z\in E\}$.  Then 
\[\sup\limits_{z\in E} \mathcal{QD}(L_{N}\cap W(z), W(z)) \le \sup\limits_{z\in E}\mathcal{QD}(L_{N(z)}\cap W(z), W(z)<Q.\]
Since $\min \{k_1(z)\mid z\in E\}$ can be taken arbitrarily large, we may assume $W(z)$'s are mutually disjoint.  Note that $L_N \setminus \bigcup_{z\in E} W(z)$ is compactly contained in $Y$. By Lemma~\ref{lem:qc3}, 
\[\mathcal{QD}(L_N\cap Y,Y)<\infty.\]

\end{proof}

\section{Maps in $\mathcal C(\lambda_{f_0})$}
Recall that $Z_0=\mathrm{orb}(\tau_0)$ is an $f_0$-admissible given by Theorem~\ref{thm:puzzle}. In this section, we study the combinatorial properties for the maps in $\mathcal C(\lambda_{f_0})$. More precisely, we show all the maps in $\mathcal C(\lambda_{f_0})$ have the same puzzle structure as $f_0$.  Moreover, for any $f\in \mathcal C(\lambda_{f_0})$  there is a qc weak pseudo-conjugacy between $f_0$ and $f$ up to an arbitrarily given depth.

\subsection{Weak pseudo-conjugacy and pseudo-conjugacy}

We say a polynomial  $\widetilde f $ {\em has the same combinatorics as $f_0$ up to depth $n$} if there exists a  homemorphism $H:\C\to \C$ which satisfies the following:
\begin{itemize}
\item $H$ is identity in the B\"ottcher coordinate for $f_0$ and $\widetilde f$  near $\infty$,i.e., $H=\phi^{-1}_{\widetilde f}\circ \phi_{f_0}$ in a neighborhood of infinity, where $\phi_{f_0}$ and $\phi_{\widetilde f}$ are B\"ottcher map for $f_0$ and $\widetilde f$ respectively;
\item  $H\circ f_0=\widetilde f\circ H$ outside the union of all the $f_0$-puzzle pieces of depth $n$. (Note that all the puzzle pieces are open in our definition.)
 \end{itemize}
 Such a homeomorphism $H$ is called a {\em weak pseudo-conjugacy up to depth $n$} between $f_0$ and $\widetilde f$.
If $H\circ f_0=\widetilde f\circ H$ holds everywhere outside all the critical puzzle pieces of depth $n$, then such an $H$ is called a {\em pseudo-conjugacy up to depth $n$ }.

\begin{Lemma}\label{lem:same combinatorics}For any $f\in \mathcal C(\lambda_{f_0})$, $f$  has the same combinatorics as $f_0$ up to an arbitrary depth.
\end{Lemma}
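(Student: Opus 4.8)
The plan is to prove Lemma~\ref{lem:same combinatorics} by inductively constructing the weak pseudo-conjugacy $H_n$ up to each depth $n$, starting from the behavior near infinity and pulling back through the puzzle. The key point is that for $f\in\mathcal C(\lambda_{f_0})$ we have $\lambda_f\supset\lambda_{f_0}$, and in fact by Lemma~\ref{lem:nondegenerate} (applied to the admissible set $Z_0$ and all its preimages) the ray-pairs that bound the $f_0$-puzzle pieces of depth $n$ land at \emph{distinct} points for $f$ as well; hence $f$ carries a puzzle structure with exactly the same incidence combinatorics as that of $f_0$. First I would fix $n$ and build a preliminary homeomorphism $H_0^{(n)}:\C\to\C$ agreeing with $\phi_{f}^{-1}\circ\phi_{f_0}$ in a neighborhood of $\infty$, mapping each $f_0$-external ray $\mathcal R_{f_0}(\theta)$ (for the finitely many angles $\theta$ appearing in $\Gamma_n^{Z_0}$) to $\mathcal R_f(\theta)$, mapping the relevant equipotential curve of $f_0$ to that of $f$, and sending each $f_0$-puzzle piece of depth $\le n$ homeomorphically onto the corresponding $f$-puzzle piece. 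This is possible because the graph $\Gamma_n^{Z_0}$ and its $f$-counterpart are finite planar graphs with the same combinatorial type (here I use Corollary~\ref{coro:shrinking} and Lemma~\ref{lem:nondegenerate} to match the cyclic order of landing angles), and complementary Jordan domains can be matched one at a time.

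Next I would upgrade $H_0^{(n)}$ to a genuine weak pseudo-conjugacy by a standard pullback/interpolation. Outside the union of depth-$n$ puzzle pieces, the map $f_0$ is a covering onto the region outside the union of depth-$(n-1)$ puzzle pieces, and likewise for $f$; since $H_0^{(n)}$ already matches the boundary data (rays and equipotentials) and the combinatorics of the pieces agree, one lifts $H_0^{(n)}$ through $f_0$ and $f$ to redefine it on the depth-$(n-1)$ shell so that $H\circ f_0=f\circ H$ there, then repeats down to depth $n$; on the depth-$n$ pieces themselves one simply keeps any homeomorphic extension (no conjugacy relation is required there). At each stage the lift is uniquely determined once boundary values are fixed, and it is a homeomorphism because $f_0,f$ have no critical points in the shells being pulled back (the critical points $0$ and $c_0$ lie in $Y^{(n)}$ and $X^{(n)}$ by construction, and $f_0$ on the complement of the critical pieces is a covering of the same degree as $f$ there, since the combinatorics match). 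The resulting $H=H^{(n)}$ is the desired weak pseudo-conjugacy up to depth $n$.

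The main obstacle, and the step deserving the most care, is the matching of combinatorics between $f_0$ and $f$: one must verify that for every angle $\theta$ occurring in $\Gamma_n^{Z_0}$ the ray $\mathcal R_f(\theta)$ lands, that distinct $f_0$-landing points give distinct $f$-landing points (this is exactly Lemma~\ref{lem:nondegenerate}, using that the periodic points of $Z_0$ and their preimages lie in $\bigcup_j f_0^{-j}(Z_0)$), and conversely that $f$ does not acquire \emph{extra} identifications among these finitely many angles that would collapse a puzzle piece. The latter follows from Corollary~\ref{coro:shrinking}: any two of these angles that are $\lambda_f$-equivalent but not $\lambda_{f_0}$-equivalent would, together with the approximating sequences $a_n\sim_{\lambda_{f_0}}b_n$, force infinitely many $f$-rays to land at one point, which is impossible. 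Once this combinatorial rigidity is in hand, the construction of $H$ is routine. I would remark that the same $H^{(n)}$, built with a little more care on the critical pieces, can be taken to be quasiconformal with a uniform bound depending on $n$ — this refinement (which Theorem~\ref{thm:Kqcpuzzle'} and the results of \S\ref{sec:Kahn} make available) is what will actually be needed in the later sections, so I would state it here as part of the lemma.
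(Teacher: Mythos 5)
Your proposal is correct and follows essentially the same route as the paper: one defines $H$ as the B\"ottcher identification $\phi_f^{-1}\circ\phi_{f_0}$ outside the union of depth-$n$ puzzle pieces (well-defined and injective at the landing points precisely by Lemma~\ref{lem:nondegenerate} together with $\lambda_{f_0}\subset\lambda_f$) and then extends arbitrarily into each piece. The only real difference is that your pullback step on the ``shells'' is superfluous---the B\"ottcher identification already conjugates $f_0$ to $f$ on the entire complement of the depth-$n$ pieces---and the quasiconformal refinement you propose to add is deferred in the paper to Lemma~\ref{lem:weak pseudo}.
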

\begin{proof}
By definition, it suffices to show that for any $k$, there exists a homeomorphism $H_k:\C\to\C$ with the following properties:
\begin{itemize}
\item $H_k$ is identity in the B\"ottcher coordinate for $f_0$ and $f$  near $\infty$, i.e., $H_k=\phi^{-1}_{ f}\circ \phi_{f_0}$ in a neighborhood of infinity, where $\phi_{f_0}$ and $\phi_{ f}$ are B\"ottcher map for $f_0$ and $f$ respectively;
\item $H_k\circ f_0=f\circ H_k$ holds everywhere outside any puzzle piece of depth $k$.
\end{itemize}
Indeed, such a homeomorphism can be constructed as following. First let $H_k=\phi^{-1}_{ f}\circ \phi_{f_0}$ outside the union of puzzle pieces of depth $k$. By Lemma~\ref{lem:nondegenerate}, such an $H_k$ is well defined. Then for each puzzle piece $Y$ of depth $k$, we extend $H_k|_{\partial Y}$ into $Y$ to be a homeomorphism in an arbitrary way. 
\end{proof}

\begin{Lemma}\label{lem:tau repelling}
If an $f_0$-external ray $\mathcal R_{f_0}(t)$ lands at $\tau_0$, then the $f$-external ray $\mathcal R_{f}(t)$ lands at a repelling periodic point of $f$ for any $f\in \mathcal C(\lambda_{f_0})$.
\end{Lemma}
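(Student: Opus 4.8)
The plan is to show that the external angle $t$ landing at $\tau_0$ under $f_0$ is (pre)periodic under multiplication by $d=d_1+d_2$, and then to use the fact that $f\in\mathcal C(\lambda_{f_0})$ forces $\mathcal R_f(t)$ to land, and to land at a periodic point that must be repelling because $f$ has no room for a non-repelling cycle of that combinatorial type. First I would recall that $\tau_0$ is a \emph{periodic} point of $f_0$ (it is $\mathrm{orb}(\tau_0)=Z_0$, an $f_0$-admissible set, so every point of $Z_0$ is periodic), say of period $\ell$; hence the set $\Theta(\tau_0)$ of angles landing at $\tau_0$ is a finite set invariant under $m_d$ (with $m_d(x)=dx\bmod 1$), and each such angle is periodic under $m_d$ with period a multiple of $\ell$. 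In particular $t$ itself is periodic under $m_d$, say $m_d^{\ell t}(t)=t$ for some $\ell_t\ge 1$ divisible by $\ell$.

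Next I would pass to $f\in\mathcal C(\lambda_{f_0})$. Since $\lambda_{f_0}\subset\lambda_f$, all angles in $\Theta(\tau_0)$ are $\lambda_f$-equivalent, so in particular $\mathcal R_f(t)$ and $\mathcal R_f(t')$ land at a common point for every $t'\in\Theta(\tau_0)$; call this common landing point $w$. (That these rays land at all follows because their angles are rational hence preperiodic; a periodic external ray of a polynomial with connected Julia set always lands, at a repelling or parabolic periodic point — this is the Douady–Yoccoz landing theorem.) Since $t$ is periodic of period $\ell_t$ under $m_d$, the ray $\mathcal R_f(t)$ is $f^{\ell_t}$-invariant, so $w$ is a periodic point of $f$ of period dividing $\ell_t$, and it is either repelling or parabolic. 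It remains to exclude the parabolic case.

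The main obstacle, and the step I expect to require the most care, is ruling out that $w$ is parabolic. Here I would exploit two inputs: (a) there are at least two rays of $f$ landing at $w$ (the images under $m_d$ of the $\ge 2$ distinct angles in $\Theta(\tau_0)$ are distinct, since $m_d$ restricted to $\Theta(\tau_0)$ is a bijection preserving cyclic order, and $\#\Theta(\tau_0)\ge 2$ because $\tau_0$ is biaccessible), so $w$ is a biaccessible periodic point separating the plane; and (b) the combinatorial position of $w$ relative to $\tau_0$ and to $U_0$ is prescribed by $\lambda_{f_0}$, and in particular $\tau_0$ is buried, so $w$ is not on the boundary of the Fatou component of $f$ playing the role of $U_0$. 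A cleaner route may be to invoke the compactness/rigidity package: if $w$ were parabolic, $f$ would have a neutral cycle, and one could deform $f$ within $\mathcal C(\lambda_{f_0})$ (which is a closed set by \cite[Theorem~8.1]{IK}) or compare with $f_0$'s lamination to derive a contradiction with the combinatorics forced on depth-$k$ puzzle pieces by Lemma~\ref{lem:same combinatorics}. Most directly, I would argue as in the standard theory of parabolic landing: a parabolic periodic point of $f$ carries only finitely many ray-cycles and its "rotation number" is constrained, and the biaccessibility together with the separation properties of $\Theta(\tau_0)$ under $m_d$ (the angles are all $m_d^{\ell_t}$-fixed) forces the rotation number to be $0$, i.e., the parabolic multiplier is $1$; but a parabolic point with multiplier $1$ and $\geq 2$ fixed rays landing at it would have an attracting petal whose basin is a Fatou component, and one checks via the puzzle structure (all maps in $\mathcal C(\lambda_{f_0})$ have the same puzzle pieces as $f_0$, which shrink off $\tau_0$ by Proposition~\ref{prop:noncrpiece}) that no such Fatou component can be accommodated without violating $\lambda_{f_0}\subset\lambda_f$ — giving the contradiction, so $w$ is repelling.
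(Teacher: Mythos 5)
Your reduction to excluding a parabolic landing point is the right first move (and matches the paper's starting point), but the exclusion itself is where your argument stops being a proof. You offer three possible routes and complete none of them; in particular your "most direct" route ends with the assertion that "no such Fatou component can be accommodated without violating $\lambda_{f_0}\subset\lambda_f$", which is exactly the statement that needs proving and is not obviously true. Nothing in the definition of $\mathcal C(\lambda_{f_0})$ forbids $f$ from having neutral cycles (only $f_0$ is assumed free of them), and a parabolic basin attached at $w=\tau_f$ would sit inside the puzzle pieces adjacent to $\tau_f$ without, on its face, destroying any landing relation in $\lambda_{f_0}$. So combinatorics alone does not finish the job.

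The missing idea is Fatou's theorem: a parabolic cycle must attract a critical point of $f$, so one must locate \emph{all} critical points of $f$ and show none can be attracted to $\mathrm{orb}(\tau_f)$. This is what the paper does, using the weak pseudo-conjugacies $H_k$ of Lemma~\ref{lem:weak pseudo}: for every non-critical puzzle piece $Y$ of depth $k$ the map $f$ restricted to $H_k(Y)$ is conformal (a degree count via the boundary conjugacy), so $\mathrm{Crit}(f)\subset H_k(Y^{(k)}\cup X^{(k)})$; the critical points in $H_k(Y^{(k)})$ have forward orbits confined to $\bigcup_j H_k(f_0^j(Y^{(k_0)}))$, which avoids $\mathrm{orb}(\tau_f)$ because $\partial Y^{(k-m_0)}\cap\mathrm{orb}(\tau_0)=\emptyset$; and $\bigcap_k H_k(X^{(k)})$ is a single Julia critical point (by the non-renormalizability argument of Lemma~\ref{lem:X shrinking}), hence not in any parabolic basin. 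Without this critical-point accounting your proof has a genuine gap. (Two smaller points: your claim that $\Theta(\tau_0)$ is $m_d$-invariant holds only after replacing $m_d$ by $m_d^{\ell}$ unless $\tau_0$ is fixed; and the rotation-number/multiplier-one digression, while correct, is not needed once one argues via attracted critical points.)
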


\begin{proof}
To obtain a contradiction, we suppose that the landing point $\tau_f $ of $\mathcal R_{f}(t)$ is parabolic. Then $\tau_f$ must attract at least one of the critical points of $f$. Let $\mathrm{Crit}(f)$ denote the set of the critical points of $f$.
By Proposition~\ref{prop:noncrpiece} and Theorem~\ref{thm:puzzle}, there exists $k_0>0$ such that $\partial Y^{(k-m_0)} \cap \mathrm{orb}(\tau_0)=\emptyset$ and $f^{m_0}_0: Y^{(k)}\to Y^{(k-m_0)}$ is an APL  map for all $k\ge k_0$. For any $k\ge k_0$, let $H_k$ be a weak pseudo-conjugacy between $f_0$ and $f$.  We claim  $\mathrm{Crit}(f) \subset H_k(Y^{(k)}\cup X^{(k)})$.
Indeed, $f_0:Y\to f_0(Y)$ is a conformal map for any puzzle piece $Y\notin \{Y^{(k)},X^{(k)}\}$ of depth $k$. Thus $f_0|_{\partial Y}:\partial Y\to \partial f_0(Y)$ is a homeomorphism, and so is $f:\partial H_k(Y) \to \partial H_k(f_0(Y))=\partial f(H_k(Y))$ since $H_k$ is a conjugacy between $f_0$ and $f$ on $\partial Y$. This implies $f:H_k(Y) \to f(H_k(Y))$ is a conformal map. Hence $H_k(Y)$ contains none of the critical points of $f$.

By a similar argument, $f:H_k(Y^{(k_0)}) \to H_k(f(Y^{(k_0)}))$ is a holomorphic proper map of degree $d_1$. Moreover, the orbit of any $c\in \mathrm{Crit}(f)\cap H_k(Y^{(k_0)})$ lies in $\bigcup_{j=1}^{m_0}f^j(H_k(Y^{(k_0)}))=\bigcup_{j=1}^{m_0}H_k(f^{j}_0(Y^{(k_0)})$. Since $\partial Y^{(k-m_0)} \cap \mathrm{orb}(\tau_0)=\emptyset$ and $\tau_f=H_k(\tau_0)$, one can conclude $\mathrm{orb}(\tau_f)$ cannot attract any critical point of $f$ which lies in $H_k(Y^{(k)})$.

On the other hand, it can be proved that $\bigcap_{k\ge k_0} H_k(X^{(k)})$ is a singleton by the same method as in the proof of Lemma~\ref{lem:X shrinking}. Note that such a singleton is a Julia critical point of $f$, which cannot be attracted by $\mathrm{orb}(\tau_f)$ either. 

As we have showed that none of the critical points of $f$ can be attracted to the orbit of $\tau_f$,  a contradiction.


\end{proof}

\begin{Lemma}\label{lem:weak pseudo}If $f\in\mathcal C(\lambda_{f_0})$, then for each $k\in \mathbb N$, there exists a qc weak pseudo-conjugacy between $f_0$ to $f$ up to depth $k$.
\end{Lemma}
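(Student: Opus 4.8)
The plan is to upgrade Lemma~\ref{lem:same combinatorics} from a topological statement to a quasiconformal one by making a careful choice in the step where we extended $H_k$ inside the puzzle pieces. Recall that in Lemma~\ref{lem:same combinatorics} we set $H_k=\phi_f^{-1}\circ\phi_{f_0}$ outside the union of the depth-$k$ puzzle pieces, and this map is already quasiconformal there because $\phi_{f_0}$ and $\phi_f$ are conformal off the equipotential and off $J$, and because $f_0$ and $f$ have the same finitely many ray-pairs up to depth $k$ (by Lemma~\ref{lem:nondegenerate}, which guarantees the puzzle combinatorics agree). The only issue is the boundary behaviour and the interior extension across each depth-$k$ puzzle piece.

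First I would fix $k$ and let $H:=\phi_f^{-1}\circ\phi_{f_0}$ on $\C\setminus\bigcup_Y \overline{Y}$, where $Y$ ranges over the puzzle pieces of depth $k$. One checks that this $H$ extends continuously to a homeomorphism $\partial Y\to\partial H(Y)$ for each such $Y$, mapping the external rays and boundary equipotential arcs of $f_0$ to those of $f$; since the boundaries $\partial Y$ and $\partial H(Y)$ are quasicircles (being finite unions of ray segments and equipotential arcs joined at repelling preperiodic points, using that $Z_0$ consists of repelling points and local connectivity at the relevant points), one can extend $H|_{\partial Y}$ to a quasiconformal homeomorphism $Y\to H(Y)$ by the Beurling--Ahlfors / Ahlfors--Beurling extension, or simply because any two Jordan domains bounded by quasicircles with a boundary homeomorphism that is already the boundary value of a qc map admit a qc extension. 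Doing this for the critical pieces $Y^{(k)}$ and $X^{(k)}$ as well as the non-critical ones, and noting there are only finitely many pieces of depth $k$, we obtain a single $H_k$ that is $K_k$-qc for some $K_k<\infty$. The relation $H_k\circ f_0=f\circ H_k$ holds outside $\bigcup_Y Y$ by construction, hence in particular outside all puzzle pieces of depth $k$, so $H_k$ is a weak pseudo-conjugacy up to depth $k$; the normalization near $\infty$ is built in.

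The step I expect to be the main obstacle is verifying that $\partial Y$ and $\partial H(Y)$ are genuinely quasicircles and that $H|_{\partial Y}$ is an ``admissible'' boundary map, i.e. quasisymmetric in the appropriate sense so that a qc extension exists. For this one uses that $\Gamma_0^{Z_0}$ is built from external rays landing at repelling (pre)periodic points together with an equipotential, and that external rays of a polynomial with connected, locally connected Julia set land in a Hölder/John fashion near repelling points; combined with the fact that $\phi_{f_0},\phi_f$ are conformal on the basin and extend Hölder-continuously to $J$, one gets that each side of each $\partial Y$ is a quasiarc and $H$ restricted to it is quasisymmetric. Alternatively, and more cheaply, one can avoid Hölder estimates entirely: thicken the picture slightly by working with the puzzle pieces of depth $k$ truncated by an equipotential of small positive height so that all boundary arcs are real-analytic quasicircles meeting at definite angles, build the qc map there, and then observe the construction descends. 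I would adopt whichever route keeps the exposition closest to the already-established machinery in \S\ref{sec:Kahn}, most likely invoking the Beurling--Ahlfors type extension exactly as in the proof of Lemma~\ref{lem:qc homeo}.

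One remark on uniformity: this lemma only asserts existence of a qc weak pseudo-conjugacy for each fixed $k$, with no control of $K_k$ as $k\to\infty$; the uniform bound is precisely what the Kahn distortion estimate of \S\ref{sec:Kahn} will be used for later, so there is nothing more to prove here once the single-$k$ construction is in place.
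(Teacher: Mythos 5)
Your construction is the same as the paper's: define $H_k=\phi_f^{-1}\circ\phi_{f_0}$ outside the depth-$k$ puzzle pieces and extend quasiconformally across each piece, the extension being possible because the boundary vertices on the Julia set are repelling (pre)periodic points (the paper cites \cite[Lemma~5.3]{KSS} for exactly this step, which is the Beurling--Ahlfors-type argument you describe). However, there is one genuine gap: you assert that $\partial H(Y)$ is a quasicircle ``joined at repelling preperiodic points,'' but your justification (``$Z_0$ consists of repelling points'') is a statement about $f_0$, not about $f$. For $f\in\mathcal C(\lambda_{f_0})$ one only knows $\lambda_f\supset\lambda_{f_0}$; a priori the periodic rational rays $\mathcal R_f(\theta)$, $\theta$ a landing angle at a point of $\bigcup_j f_0^{-j}(Z_0)$, could land at a \emph{parabolic} periodic point of $f$. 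In that case $\partial H(Y)$ would have a cusp at that vertex, $H|_{\partial Y}$ would fail to be quasisymmetric there, and no qc extension would exist. Ruling this out is precisely the content of Lemma~\ref{lem:tau repelling}, which the paper proves by showing that no critical point of $f$ could be attracted to such a parabolic cycle; your proof must invoke that lemma (or reprove it) before the extension step.

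A secondary remark: your ``cheaper'' alternative of truncating by an equipotential of small positive height does not avoid the issue, since the problematic vertices are the landing points on $J_f$ themselves; cutting the rays at positive potential does not produce the puzzle pieces at all. The main route, with Lemma~\ref{lem:tau repelling} supplied, is correct and coincides with the paper's proof. Your closing remark that no uniformity in $k$ is needed here is also consistent with how the paper uses the lemma.
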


\begin{proof}Let $\Theta$ denote the angles $\theta$ for which $\mathcal R_{f_0}(\theta)$ lands at $\tau_0$. By Lemma~\ref{lem:tau repelling},  for each $\theta\in \Theta$, the landing point of $\mathcal R_{f}(\theta)$ is a repelling periodic point of $f$.

For each $k\in \mathbb N$, we can construct such a qc weak pseudo-conjugacy $H_k$ between $f_0$ and $f$ up to depth $k$ by hand. First we define $H_k$ as identity in the  B\"ottcher coordinate for $f_0$ and  $f$ outside the union of the puzzle pieces of depth $k$. It is well-known that for each puzzle piece $Y$ of depth $k$, $H_k|_{\partial Y}$ can extend to a qc homeomorphism form $\overline Y$ onto $\overline{H_k(Y)}$ since $J_f\cap \partial Y$ are repelling (pre)-periodic points (see  \cite[Lemma~5.3]{KSS}).

\end{proof}

\begin{Lemma}\label{lem: landing on boundary}
Let $n_0>m_0$ be an integer such that $Y^{(n_0)},  f_0(Y^{(n_0)}),\ldots, f_0^{m_0-1}(Y^{(n_0)})$ are disjoint and  $F_0=f^{m_0}_0|_{Y^{(n_0)}}:Y^{(n_0)}\to Y^{(n_0-m_0)}$  is an APL map. Suppose $\widetilde f\in \mathcal P_{d_1+d_2}$ has the same combinatorics as $f_0$ up to depth $n_0$. Assume there exists a qc weak pseudo-conjugacy between $f_0$ and $\widetilde f$ up to depth $n_0$ such that  $$\widetilde F=\widetilde f^{m_0}|_{\Phi(Y^{(n_0)}}: \Phi(Y^{(n_0)}) \to \Phi(Y^{(n_0-m_0)})$$ is an APL map of degree $d_1$ with connected filled Julia set $K(\tilde F)$.
Then  $\mathcal R_{\widetilde f}(s)$ and  $\mathcal R_{\widetilde f}(t)$ land at a common point if $\mathcal R_{f_0}(s)$ and  $\mathcal R_{f_0}(t)$ land at a common point which belongs to $\partial U_0$.
\end{Lemma}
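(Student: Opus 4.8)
The plan is to transport the ``common landing on $\partial U_0$'' relation from $f_0$ to $\widetilde f$ by using the weak pseudo-conjugacy $\Phi$ together with the renormalization structure guaranteed by the hypotheses. First I would observe that if $\mathcal R_{f_0}(s)$ and $\mathcal R_{f_0}(t)$ land at a common point $w\in\partial U_0$, then by Lemma~\ref{lem:puuzle-1} the point $w$ is eventually periodic, and by Theorem~\ref{thm:puzzle}(2) we have $\overline{U_0}=\bigcap_n Y^{(n)}$, so after replacing $(s,t)$ by a suitable forward image under $m_{d_1+d_2}$ we may assume $w$ is periodic and that the rays $\mathcal R_{f_0}(s),\mathcal R_{f_0}(t)$ meet $\partial Y^{(n)}$ for all large $n$, i.e. $w\in K(F_0)=\overline{U_0}$ and both rays approach $\partial K(F_0)$. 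Since $\widetilde f$ has the same combinatorics up to depth $n_0$ and $\widetilde F=\widetilde f^{m_0}|_{\Phi(Y^{(n_0)})}$ is an APL map of degree $d_1$ with connected filled Julia set, its filled Julia set $K(\widetilde F)=\bigcap_n \Phi(Y^{(n)})$ is well defined; the two $\widetilde f$-rays $\mathcal R_{\widetilde f}(s),\mathcal R_{\widetilde f}(t)$ both accumulate on $\partial K(\widetilde F)$ because the weak pseudo-conjugacy matches the combinatorics of the external rays exactly outside the critical puzzle pieces and the relevant rays stay outside $X^{(n)}$.

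Next I would invoke the Douady--Hubbard straightening theorem for the polynomial-like map $\widetilde F$: since $K(\widetilde F)$ is connected, $\widetilde F$ is hybrid equivalent to a degree-$d_1$ polynomial $Q$, say via $h:K(\widetilde F)\to K(Q)$ (extended quasiconformally to neighbourhoods). The landing of $\mathcal R_{\widetilde f}(s)$ and $\mathcal R_{\widetilde f}(t)$ on $\partial K(\widetilde F)$ is governed by the internal angles, which correspond under $h$ to external rays of $Q$; the combinatorial identification coming from the internal angle system $\alpha$ and from the equality of combinatorics up to depth $n_0$ forces $\mathcal R_{\widetilde f}(s)$ and $\mathcal R_{\widetilde f}(t)$ to land at the image under $h^{-1}$ of the common landing point of the corresponding external rays of $Q$ — provided the corresponding external rays of $Q$ do land at a common point, which is exactly the content of the $f_0$-side hypothesis transported through the straightening of $F_0$ (which is $z\mapsto z^{d_1}$, whose rays land at prescribed roots of unity on $\partial\overline{U_0}$ via $\alpha$). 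In other words, the $\partial U_0$-landing pattern of $f_0$ is encoded entirely by the angle doubling/multiplication $m_{d_1}$ on internal angles, and this pattern is preserved because $\alpha\circ f^{m_0}=m_{d_1}\circ\alpha$ holds and the weak pseudo-conjugacy respects the boundary dynamics of $Y^{(n_0)}$.

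Then I would run the shrinking argument to upgrade this combinatorial matching to actual landing: by Proposition~\ref{prop:noncrpiece} (applied on the $f_0$ side) and the analogous statement for the $\widetilde f$-puzzle — which holds since $\widetilde f$ shares the puzzle structure and $K(\widetilde F)$ is nondegenerate — the puzzle pieces attaching to preimages of $Z_0$ shrink, so the rays $\mathcal R_{\widetilde f}(s),\mathcal R_{\widetilde f}(t)$ actually land (not merely accumulate), and they land at the same point because they are trapped in the same nested sequence of puzzle pieces $\Phi(Y^{(n)})$ whose diameters go to $0$. Finally I would combine: the common puzzle-nest for $s$ and $t$ on the $\widetilde f$ side exists precisely because it exists on the $f_0$ side (the combinatorics agree and both sequences of rays stay out of the critical pieces), so $\mathcal R_{\widetilde f}(s)$ and $\mathcal R_{\widetilde f}(t)$ land at a common point.

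I expect the main obstacle to be the step where one must guarantee that the two $\widetilde f$-rays genuinely \emph{converge} to a single point of $\partial K(\widetilde F)$ rather than merely accumulating on a continuum: this requires either local connectivity of $K(\widetilde F)$ at the relevant point (via the straightening to $Q$ and a Yoccoz-type argument, or directly from the shrinking of puzzle pieces) together with a careful check that the combinatorial data from $\Phi$ pins down exactly which boundary point of $K(\widetilde F)$ is the common landing point. Controlling this uniformly — i.e. ruling out that the straightening $h$ could ``unglue'' the two internal angles — is the delicate point, and it is here that the hypothesis that $\widetilde F$ has degree exactly $d_1$ with connected Julia set, rather than some degenerate configuration, is essential.
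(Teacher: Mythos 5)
Your proposal has a genuine gap, and it sits exactly where you suspected it would. The decisive error is the claim that the two $\widetilde f$-rays ``land at the same point because they are trapped in the same nested sequence of puzzle pieces $\Phi(Y^{(n)})$ whose diameters go to $0$.'' The common landing point of $\mathcal R_{f_0}(s)$ and $\mathcal R_{f_0}(t)$ lies on $\partial U_0$, and the nest of puzzle pieces containing it is $\{Y^{(n)}\}$ itself, which shrinks to $\overline{U_0}$ (Theorem~\ref{thm:puzzle}(2)), not to a point; correspondingly on the $\widetilde f$ side the nest shrinks to $K(\widetilde F)$, the filled Julia set of a degree-$d_1\ge 2$ polynomial-like map, a nondegenerate continuum. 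Proposition~\ref{prop:noncrpiece} only covers points of $\bigcup_j f_0^{-j}(Z_0)$ and gives you nothing on $\partial U_0$. Nor can you rescue the argument via local connectivity of $K(\widetilde F)$: the lemma is applied with $\widetilde F$ hybrid equivalent to an arbitrary $g\in\mathcal C_{d_1}$, whose filled Julia set need not be locally connected. Your middle paragraph (straightening $\widetilde F$ to $Q$ and matching internal angles) gestures at the right structure but leaves unproved the one thing that matters: why the \emph{external} ray $\mathcal R_{\widetilde f}(s)$ of the global polynomial selects the ``same'' access to $K(\widetilde F)$ as $\mathcal R_{\widetilde f}(t)$.

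The paper closes this gap differently and you should compare. It builds, by the standard pullback/lifting construction, a quasiconformal map $H:\C\setminus K(F)\to\C\setminus K(\widetilde F)$ with $H\circ F=\widetilde F\circ H$ on $Y^{(n_0)}\setminus K(F)$ and $H=\Phi$ outside $Y^{(n_0)}$, so that $H$ carries $\mathcal R_{f_0}(s)\cup\mathcal R_{f_0}(t)$ onto $\mathcal R_{\widetilde f}(s)\cup\mathcal R_{\widetilde f}(t)$. Local connectivity is then used only where it is actually available, namely for $K(F)=\overline{U_0}$: since $\partial U_0$ is a Jordan curve, $\varphi^{-1}\circ H$ extends to a homeomorphism of $\C\setminus K(F)$ onto $\C\setminus\mathbb D$ (where $\varphi$ is a Riemann map for $\C\setminus K(\widetilde F)$), so the curves $\varphi^{-1}(\mathcal R_{\widetilde f}(s))$ and $\varphi^{-1}(\mathcal R_{\widetilde f}(t))$ converge to the \emph{same} point $x\in\partial\mathbb D$. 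Each rational ray $\mathcal R_{\widetilde f}(\cdot)$ lands in the plane, so by Lindel\"of's theorem the radial limit of $\varphi$ at $x$ equals both landing points, forcing them to coincide — with no local connectivity hypothesis on $K(\widetilde F)$ and no shrinking of puzzle pieces. That prime-end/Lindel\"of step is the missing idea in your write-up.
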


\begin{proof} We define a new qc map $H:\C\setminus K(F) \to \C\to \setminus K(\widetilde F)$ such that $H=\Phi$ on $\C\setminus Y^{(n_0)}$ and $H\circ F=\widetilde F\circ H$ on $Y^{(n_0)} \setminus K(F)$. The construction of $H$ is the following.

Let $U=Y^{(n_0)}$, $V=Y^{(n_0-m_0)}$, $\widetilde U=\Phi (Y^{(n_0)})$ and $\widetilde V=\Phi( Y^{(n_0-m_0)})$.
First choose quasidisks $W,W'$ and $\widetilde W,\widetilde W'$ such that $K(F) \Subset W\Subset W'\Subset U$ and  $K(\widetilde F) \Subset \widetilde W\Subset \widetilde W'\Subset \widetilde U$ respectively. Choose a qc homeomorphism $\Psi:W\setminus K(F) \to \widetilde W\setminus K(\widetilde F)$.  Define $H_0=\Phi$ on $\C \setminus W'$ and  $H_0=\Psi$ on $W\setminus K(F)$. Then  interpolate quasiconformally to obtain a qc homeomorphism  $H_0:\C\setminus K(F) \to \C\setminus K(\widetilde F)$.
Set $\widehat H_0=H_0|_{V\setminus K(F)}$. Since  $F:U\setminus K(F)\to V\setminus K(F)$ and $\widetilde F: \widetilde U \to K(\widetilde F)$
are coverings of the same degree , we lift $\widehat H_0$ to a  $K$-qc map $\widehat H_1:U\setminus K(F)\to\widetilde U\setminus K(\widetilde F)$ such that $\widehat H_1|_{\partial U}=\widehat H_0=H_0=\Phi$. Let $H_1:\C\setminus K(F) \to \C\setminus K(\tilde F)$ be a $K$-qc map which is obtained by glueing $H_0|_{\C\setminus U}$ and $\widehat H_1$ together. By definition,  $\widetilde F\circ H_1=H_0\circ F$ on $U\setminus K(F)$. Inductively, we can obtain a sequence  of $K$-qc maps $H_n:\C\setminus K(F) \to \C\setminus K(\tilde F)$ such that $\widetilde F\circ H_{n+1}=H_n\circ F$ on $U\setminus K(F)$. Since $H_n$ are $K$-qc  and eventually stable on $\C\setminus K(F)$, $H_n$ converges to a $K$-qc map $H:\C\setminus K(F) \to \C\setminus K(\tilde F)$ which is desired.
 
 Note that $H$ maps $\mathcal R_{f_0}(s)$ and $\mathcal R_{f_0}(t)$ onto  $\mathcal R_{\widetilde f}(s)$ and $\mathcal R_{\widetilde f}(t)$ respectively. 
Let $\varphi:\C\setminus\overline{\mathbb D}\to \C\setminus K(\widetilde F)$ denote a Riemann mapping. Since $\partial K(F)=\partial U_0$ is a Jordan curve, 
$\varphi^{-1}\circ H$ extends to a homeomorphism from $\C\setminus K(F)$ onto $\C \setminus \mathbb D$. Let $\zeta(\theta)$ and $\widetilde{\zeta}(\theta)$ denote the landing point of the external ray $\mathcal R_{ f_0}(\theta)$ and $\mathcal R_{\widetilde f}(\theta)$ of angle $\theta \in \mathbb Q/\Z$ respectively.  Note that $\varphi^{-1}(\mathcal R_{\widetilde f}(s))=\varphi^{-1}\circ H(\mathcal R_{f_0}(s))$ lands at $x=\varphi^{-1}\circ H(\zeta(s))=\varphi^{-1}\circ H(\zeta(t))$ and $\mathcal R_{\widetilde f}(s)$ lands at $\widetilde{\zeta}(s)$. By Lindel\"of Theorem (see \cite[Theorem~6.3]{McM1}), the hyperbolic geodesic $\varphi([0,x])$  lands at $\widetilde{\zeta}(s)$. In the same manner we can see that  $\varphi([0,x])$ also lands at $\widetilde{\zeta}(t)$. Thus $\widetilde{\zeta}(s)=\widetilde{\zeta}(t)$, in other words,  $\mathcal R_{\widetilde f}(s)$ and  $\mathcal R_{\widetilde f}(t)$ land at a common point.
\end{proof}

\section{Quasiconformal surgery}\label{sec:qc surgery}
Recall $f_0$ is a primitive  polynomial with a periodic critical point $0$ of period $m_0$ and a Julia critical critical point $c_0$ at which $f_0$ is non-renormalizable. Let $Z_0=\mathrm{orb}(\tau_0)$ be given by Theorem~\ref{thm:puzzle}.   Fix $\kappa_0$ large enough so that the following holds:

 \begin{itemize}
 \item $\overline {Y^{(\kappa_0)}},\overline{f_0(Y^{(\kappa_0)})},\ldots, \overline{f^{m_0-1}_0(Y^{(\kappa_0)})}$ are disjoint;
 \item $f^{m_0}_0:Y^{(\kappa_0)} \to Y^{(\kappa_0-m_0)}$ is an APL map with filled Julia set $\overline{U_0}$.
  \end{itemize}
 The existence of  $\kappa_0$ follows from Theorem~\ref{thm:puzzle} and the primitive property of $f_0$. 
 
The aim of this section is to prove Theorem~B. In~\cite{SW}, qc surgery and  Thurston Algorithm was applied successfully to construct the desired tuning directly. Unfortunately, we cannot construct our desired tuning in our case directly. As the orbit of the free critical point may accumulate on the boundary $\partial U_0$ of the Fatou component $U_0$, it is impossible to respect all the combinatorics when one does qc surgery. Instead,  we first show that for an arbitrary given depth there is a polynomial $P$ which has the same combinatorics as $f_0$ up to this depth and has some other dynamical properties. More precisely, we prove: 
\begin{Theorem}\label{thm:sequence}Let $f_0\in \mathcal C_{d_1+d_2}$ be a primitive  polynomial with a periodic critical point $0$ of multiplicity $d_1-1$ and a Julia critical point $c_0$ of multiplicity $d_2$. Assume $f_0$ has no neutral cycle and is not renormalizable at $c_0$. Fix an internal angle system $\alpha$ for $f_0$ and choose an external angle $\theta_0$ such that $\mathcal R_{f_0}(\theta_0)$ lands at $\alpha^{-1}(0)$. Then for any $k>\kappa_0$ and $g\in \mathcal C_{d_1}$, there exists a polynomial $f_k\in \mathcal P_{d_1+d_2}$ with the following properties:
\begin{enumerate}
\item there exists a qc weak pseudo-conjugacy $\Phi_k$ between $f_0$ and $f_k$ up to depth $k$;
\item $\max \{G_{f_k}(c')\mid  c' ~\text{is a critical point of}~f_k\}\le d^{-k}$, where $d=d_1+d_2$;
\item $f_k^{m_0}:\Phi_k(Y^{(\kappa_0)}) \to \Phi_k(Y^{(\kappa_0-m_0)})$ is an APL map with a polynomial-like restriction which is hybrid equivalent to $g$ and the hybrid conjugacy $\varphi_k$ can be chosen so that $\varphi_k$ sends $\mathcal R_{f_k}(\theta_0)$ to $\mathcal R_g(0)$.
\end{enumerate}
\end{Theorem}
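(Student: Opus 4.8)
The plan is to prove the theorem by a single quasiconformal surgery on $f_0$ together with \cite[Theorem~5.1]{SW}. Fix $k>\kappa_0$ and $g\in\mathcal C_{d_1}$. For $k>\kappa_0$ the critical pieces $Y^{(k)}$ and $X^{(k)}$ are disjoint --- they shrink respectively to $\overline{U_0}$ and to $\{c_0\}\not\subset\overline{U_0}$ by Theorem~\ref{thm:puzzle} --- so $f_0|_{X^{(k)}}:X^{(k)}\to f_0(X^{(k)})$ is a branched covering of degree $d_2+1$ whose only critical point is $c_0$. By Theorem~\ref{thm:puzzle} fix also $n_1\ge k+m_0$ so large that $\overline{Y^{(n_1)}},\overline{f_0(Y^{(n_1)})},\dots,\overline{f_0^{m_0-1}(Y^{(n_1)})}$ are pairwise disjoint and $f_0^{m_0}|_{Y^{(n_1)}}:Y^{(n_1)}\to Y^{(n_1-m_0)}$ is an APL map with filled Julia set $\overline{U_0}$; its polynomial-like restriction $F_0:U\to V$ with $K(F_0)=\overline{U_0}$ is then hybrid equivalent to $z\mapsto z^{d_1}$ via a conjugacy $\psi_0$ carrying the external marking of $F_0$ induced by $\alpha$ (the access of $\mathcal R_{f_0}(\theta_0)$) to the standard marking of $z\mapsto z^{d_1}$. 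Since $n_1\ge k+m_0$, each $f_0^{j}(Y^{(n_1)})$ with $0\le j<m_0$, and likewise $X^{(k)}$, lies inside a puzzle piece of depth $k$.

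I would then build a quasiregular branched covering $\mathcal F_k:\C\to\C$ of degree $d_1+d_2$ (fixing $\infty$ with local degree $d_1+d_2$) as follows: set $\mathcal F_k=f_0$ off $\bigcup_{j=0}^{m_0-1}f_0^{j}(Y^{(n_1)})\cup X^{(k)}$; on a neighbourhood of $\mathrm{orb}(\overline{U_0})$ use $\psi_0$ to splice in the dynamics of $g$ in place of $z\mapsto z^{d_1}$, i.e.\ glue in a holomorphic copy $K$ of $K(g)$ along the whole cycle and interpolate quasiconformally in the surrounding annuli, so that $\bar\partial\mathcal F_k=0$ on $K$; and on $X^{(k)}$ replace $f_0$ by a quasiregular branched covering of $X^{(k)}$ onto $f_0(X^{(k)})$ of degree $d_2+1$, agreeing with $f_0$ on $\partial X^{(k)}$, with unique critical point $c_0$, but with the critical value $\mathcal F_k(c_0)$ moved to a point of $f_0(X^{(k)})$ at potential $G_{f_0}(\mathcal F_k(c_0))\in(d^{-k},d^{1-k})$. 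All the modification is then supported inside puzzle pieces of depth $k$, so $\mathcal F_k=f_0$ outside every $f_0$-puzzle piece of depth $k$; and since the relocated critical value lies at potential larger than $d^{-k}$, the whole forward $\mathcal F_k$-orbit of $c_0$ lies in the unbounded component of $\C\setminus\Gamma^{Z_0}_k$ and never returns to the modified region. This last point is the ``careful control of the free critical orbit'': it is what keeps the invariant complex structure from acquiring unbounded dilatation at $c_0$, the phenomenon that a direct surgery on $f_0$ would suffer from because the orbit of $c_0$ may accumulate on $\partial U_0$ and hence cross the splicing annuli infinitely often.

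Next I would invoke \cite[Theorem~5.1]{SW} to straighten $\mathcal F_k$: its hypotheses are supplied by the puzzle of Theorem~\ref{thm:puzzle}, the shrinking of puzzle pieces (Proposition~\ref{prop:noncrpiece}, Lemma~\ref{lem:X shrinking}) and the $K$-qc distortion bound of Theorem~\ref{thm:Kqcpuzzle'}, which together control the pull-backs in Thurston's algorithm and show that the $\mathcal F_k$-invariant Beltrami coefficient is bounded. One obtains a polynomial $f_k\in\mathcal P_{d_1+d_2}$ and a qc map $\Phi_k$ with $f_k\circ\Phi_k=\Phi_k\circ\mathcal F_k$, normalized so that $\Phi_k=\phi_{f_k}^{-1}\circ\phi_{f_0}$ near $\infty$. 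Then (1) is immediate because $\mathcal F_k=f_0$ outside depth-$k$ puzzle pieces. For (2): the critical points of $f_k$ are the $\Phi_k$-images of those of $\mathcal F_k$; those inside the copy $K$ of $K(g)$ have vanishing Green's function since $g\in\mathcal C_{d_1}$, while the one coming from $c_0$ satisfies $G_{f_k}(\Phi_k(c_0))=G_{\mathcal F_k}(c_0)=\tfrac1d G_{\mathcal F_k}(\mathcal F_k(c_0))=\tfrac1d G_{f_0}(\mathcal F_k(c_0))<d^{-k}$. For (3): $f_k^{m_0}$ restricted to $\Phi_k(Y^{(\kappa_0)})$ is holomorphic, is an APL map onto $\Phi_k(Y^{(\kappa_0-m_0)})$, and has a polynomial-like restriction with filled Julia set $\Phi_k(K)$; as the invariant Beltrami coefficient vanishes on $K$, $\Phi_k$ is conformal on $\mathrm{int}\,K$ and conformal a.e.\ on $K$, so this restriction is hybrid equivalent to $g$ because $\mathcal F_k^{m_0}|_K$ was spliced to be $g$; unwinding $\psi_0$ and the standard marking used in splicing $g$ into $z\mapsto z^{d_1}$ shows the hybrid conjugacy $\varphi_k$ can be taken to send $\mathcal R_{f_k}(\theta_0)$ to $\mathcal R_g(0)$.

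The step I expect to be hardest is the surgery itself: arranging the replacement of $z\mapsto z^{d_1}$ by $g$ near the cycle of $U_0$ and the re-routing of the free critical value so that both are confined to puzzle pieces of depth $k$ (so that (1) holds) while the resulting quasiregular map remains tame enough for \cite[Theorem~5.1]{SW} --- in particular so that the $\mathcal F_k$-invariant conformal structure has bounded dilatation in spite of the possible accumulation of the forward orbit of $c_0$ on $\partial U_0$ under $f_0$ itself. Making the free critical orbit escape at speed $\le d^{-k}$, via the relocation of the branch value on $X^{(k)}$, is what resolves this; checking that it can be done without disturbing $\Gamma^{Z_0}_k$ and that the hypotheses of \cite[Theorem~5.1]{SW} then hold is exactly the place where the shrinking estimates of Section~\ref{sec:puzzle} and the distortion bound of Theorem~\ref{thm:Kqcpuzzle'} --- absent from the hyperbolic setting of \cite{SW} --- are needed.
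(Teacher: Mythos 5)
Your overall strategy coincides with the paper's: build a quasiregular modification of $f_0$ supported in depth-$k$ puzzle pieces, splice $g$ into the cycle of $\overline{U_0}$, re-route the free critical orbit into the basin of infinity at potential at most $d^{1-k}$, and straighten via \cite[Theorem~5.1]{SW}. (Your re-routing is done at $c_0$ itself, by moving the branch value of a modified map on $X^{(k)}$; the paper instead leaves the map untouched on $X^{(k)}$ and diverts the orbit at its first entrance into a thin annulus $A\subset Y^{(N_0)}$ around $\overline{U_0}$, with a separate case when the orbit never enters $A$. Your variant keeps the escape estimate for (2) and the ``never returns to the modified region'' property, but it enlarges the non-holomorphicity locus $\mathcal A_k$ to include $X^{(k)}$, which makes the verification below harder, not easier.)

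The genuine gap is in the sentence ``its hypotheses are supplied by \dots which show that the $\mathcal F_k$-invariant Beltrami coefficient is bounded.'' There is no bounded $\mathcal F_k$-invariant Beltrami form here: orbits accumulating on the cycle of $\partial U_0$ cross the splicing annuli infinitely often, independently of what you do with $c_0$, and this is exactly why \cite{SW} replaces the measurable Riemann mapping theorem by a Thurston-algorithm statement. The actual hypotheses of \cite[Theorem~5.1]{SW} are the existence of an $\widetilde f_k$-free, $M_k$-nice open set $\mathcal B_k$ (conditions (\ref{eqn:shapeB})--(\ref{eqn:Bretqc})) together with a bound $T_k$ on the number of passes through $\mathcal A_k$ for orbits avoiding $\mathcal B_k$; constructing $\mathcal B_k$ (as first-entry domains to a return set in $Y^{(\ell)}\setminus\overline{Y^{(\ell+m_0)}}$ meeting $R(\mathcal A_k)$) and verifying freeness, $M_k$-niceness via Theorem~\ref{thm:Kqcpuzzle'}, and the $T_k$-bound is the bulk of the paper's proof (Facts~1--3 of Lemma~\ref{lem:surgery}) and is entirely absent from your argument. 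Moreover, the conclusion of \cite[Theorem~5.1]{SW} is only a semiconjugacy $h_k$ that coincides with a qc map off $\bigcup_n\mathcal F_k^{-n}(\mathcal B_k)$ and is \emph{not} qc on $Y^{(\kappa_0)}$; so property (1) needs the complement of the depth-$k$ pieces to be disjoint from that backward orbit, and property (3) needs the homotopy-lifting construction of a uniformly qc conjugacy $H$ on $Y^{(\kappa_0-m_0)}$, conformal a.e.\ on $K(\widetilde F_k)$ (using the disjointness of $K(\widetilde F_k)$ from $\bigcup_n\mathcal F_k^{-n}(\mathcal B_k\cup\mathcal A_k)$ and the finite $\mathcal A_k$-pass count for orbits in $Y^{(\kappa_0)}$). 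Your deduction of (3) from ``the invariant Beltrami coefficient vanishes on $K$'' presumes a global qc conjugacy produced by the measurable Riemann mapping theorem, which is precisely what is unavailable in this setting.
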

\begin{Remark}In $(3)$ of Theorem~\ref{thm:sequence}, the statement $\varphi_k$ sends $\mathcal R_{f_k}(\theta_0)$ to $\mathcal R_g(0)$ means that there exist neighborhoods $W_k$ and $W$ of the landing points of $\mathcal R_{f_k}(\theta_0)$ and  $\mathcal R_g(0)$ respectively such that $\varphi_k$ maps the connected component of $\overline{\mathcal R_{f_k}(\theta_0)}\cap W_k$ which intersects $J_{f_k}$ onto the connected component of $\overline{\mathcal R_g(0)} \cap W$ which intersects $J_{g}$.
\end{Remark}

The proof is based on  quasiconformal surgery and techniques developed in~ \cite{SW}. Then we take a suitable limit and use some combinatorial facts to show that:

\begin{Theorem B}Under the assumptions in the Theorem~A, $f_0$ can be tuned with any polynomial  $g\in\mathcal C_{d_1}$.
\end{Theorem B}

  \subsection{Constructions of quasiregular maps}Let us recall some terminologies and notations in \cite{SW}, we say a quasiregular map $\widetilde{f}$ is a {\em quasi-polynomial} if $\widetilde{f}^{-1}(\infty)=\infty$ and $\widetilde{f}$ is holomorphic in a neighborhood of $\infty$. An open set $\mathcal U$ is called {\em nice with respect to $\widetilde f$} if each component of $\mathcal U$ is a Jordan domain and $\widetilde{f}^k(\partial \mathcal U)\cap \mathcal U=\emptyset$ for each $k\ge 1$. For  a nice open set $U$, let
$$D(\mathcal U,\widetilde f)=\{z\in \C: \exists n\ge 1\text{ such that } \widetilde{f}^n(z)\in \mathcal U\}$$
denote the domain of the first entry map to $\mathcal U$ under $\widetilde f$.

A nice open set $\mathcal U$ is said to be {\em $\widetilde f$-free} if $\mathcal U \cap P(\widetilde f)=\emptyset$, where $P(\widetilde f)$ is the closure of the orbit of the ramification points of  $P(\widetilde f)$.

Let $\mathcal B$ be a nice open set with respect to $\widetilde f$.  We say   $\mathcal{B}$  is
{\em $M$-nice with respect to $\widetilde f$} if  $\mathcal  B$ is nice with respect to $\widetilde f$ and for each component $B$ of $\mathcal{B}$, the following three conditions hold:
\begin{equation}\label{eqn:shapeB}
\mathrm{diam}(B)^2\le M\mathrm{area}(B) ;
\end{equation}
\begin{equation}\label{eqn:Bretsmall}
\frac{\mathrm{area}(B\setminus D(\mathcal  B,\widetilde f))}{\mathrm{area}(B)}>M^{-1};
\end{equation}
\begin{equation}\label{eqn:Bretqc}
\mathcal{QD}(D(\mathcal B,\widetilde f)\cap B,B)\le M,
\end{equation}
where $\mathcal{QD}$ is as defined in \S\ref{sec:Kahn}.


 \begin{Lemma}\label{lem:surgery} Let $g\in \mathcal C_{d_1}$. Then for any positive integer $k>\kappa_0$, there is a quasi-polynomial  $\widetilde f_k$ of degree $d_1+d_2$ and an open set $\mathcal B_k \Subset Y^{(k)}$  satisfying the following properties:
 \begin{itemize} 
 \item $\widetilde f_k=f_0$ on $\C\setminus Y^{(k)}$;
  \item $\mathcal B_k$ is $\widetilde f_k$-free and $M_k$-nice for some $M_k>0$;
 \item there exists a positive integer $T_k$ such that 
 for every $z$ and $n\ge 1$,  if $\widetilde{f}_k^j(z)\not\in\mathcal{B}_k$ for each $0\le j <n$, then  $\#\{0\le j<n:  \widetilde f^j_k(z) \in \mathcal A_k\}\le T_k$, where $\mathcal A_k=\{z\mid \bar\partial \widetilde f_k(z) \ne 0\}$;
 \item $\widetilde F_k=\widetilde f^{m_0}_k|_{Y^{(\kappa_0)}}:Y^{(\kappa_0)}\to  Y^{(\kappa_0-m_0)}$ is a quasiregular map of degree $d_1$ with a polynomial-like restriction $P_k$ which is hybrid equivalent to $g$.  Furthermore, $K(\widetilde F_k):=\bigcap_{n=0}^{\infty} \widetilde F^{-n}_k(Y^{(\kappa_0)}) $ is  disjoint from $ \bigcup_{n=0}^\infty\widetilde{f}_k^{-n}(\mathcal{B}_k\cup \mathcal A_k)$ and each orbit of $\widetilde F_k$ passes through $\mathcal A_k$ at most $T_k$ times.
 \item there exists a $\widetilde f_k^{m_0}$-invariant ray $\gamma_k$ landing on $K(\widetilde F_k)$ such that $\gamma_k\cap (\C\setminus Y^{(\kappa_0)})=\mathcal R_{f_0}(\theta_0)$ and there exists a hybrid conjugacy $\psi_k$ between $P_k$ and $g$ which sends $\gamma_k$ to $\mathcal R_g(0)$.
 \end{itemize}
  \end{Lemma}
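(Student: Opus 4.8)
\textbf{Proof proposal for Lemma~\ref{lem:surgery}.}
The plan is to build $\widetilde f_k$ by quasiconformal surgery inside the critical puzzle piece $Y^{(\kappa_0)}$, replacing the dynamics of $f^{m_0}_0|_{Y^{(\kappa_0)}}$ (whose filled Julia set is $\overline{U_0}$, i.e.\ the polynomial-like map $F_0$ that straightens to $z\mapsto z^{d_1}$) by a quasiregular map straightening to $g$, while keeping the map unchanged on $\C\setminus Y^{(k)}$ so that the combinatorics up to depth $k$ are preserved. First I would invoke \cite[Theorem~5.1]{SW} applied to $g\in\mathcal C_{d_1}$: this yields a model quasiregular map and an $M$-nice free open set with a uniform bound on the number of times an orbit meets the support of $\bar\partial$ before landing in the nice set. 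Concretely, one interpolates between $F_0$ on $\partial Y^{(\kappa_0)}$ (where the external marking $\mathcal R_{f_0}(\theta_0)$ enters) and a polynomial-like restriction of $g$ supported on a small quasidisk around $K(\widetilde F_k)$, using a quasiconformal interpolation on the annular region; this produces $\widetilde F_k=\widetilde f^{m_0}_k|_{Y^{(\kappa_0)}}$ of degree $d_1$ with a polynomial-like restriction $P_k$ hybrid equivalent to $g$. Pulling the modification back along the $m_0-1$ disjoint pieces $f_0(Y^{(\kappa_0)}),\dots,f_0^{m_0-1}(Y^{(\kappa_0)})$ and leaving $f_0$ untouched elsewhere defines the quasi-polynomial $\widetilde f_k$, which agrees with $f_0$ off $Y^{(k)}$ once we shrink the support of the surgery to lie in $Y^{(k)}\Subset Y^{(\kappa_0)}$ (possible since $k>\kappa_0$ and the puzzle pieces shrink).

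Next I would produce the nice set $\mathcal B_k$. The natural candidate is a small puzzle piece around $\tau_0$ (or one of its preimages) deep enough that it is disjoint from the support $\mathcal A_k$ of $\bar\partial\widetilde f_k$ and from a large finite chunk of the postcritical set; niceness ($\widetilde f_k^j(\partial\mathcal B_k)\cap\mathcal B_k=\emptyset$) is automatic for puzzle pieces since their boundaries lie on $\Gamma^Z_n$, and $\widetilde f_k$-freeness is arranged by choosing the depth large. The three $M_k$-nice estimates are exactly of the type handled in \cite{SW}: the bounded-geometry bound \eqref{eqn:shapeB} and the definite-gap bound \eqref{eqn:Bretsmall} follow from the toy-model comparison, and the key quasiconformal-distortion bound \eqref{eqn:Bretqc} is Theorem~\ref{thm:Kqcpuzzle'}, which tells us $\mathcal{QD}(L_N\cap Y,Y)<\infty$ for $Y=$ an appropriate puzzle piece and $N$ large; one takes $\mathcal B_k$ to be $Y^{(N)}$ for that $N$ (or a landing domain inside it). The uniform bound $T_k$ on the number of visits to $\mathcal A_k$ before entering $\mathcal B_k$ comes from the fact that $\mathcal A_k$ is contained in a bounded number of puzzle pieces of a fixed depth together with the combinatorial structure of the first-landing map to $\mathcal B_k$, again following \cite{SW}.

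For the last two bullets: $K(\widetilde F_k)=\bigcap_n\widetilde F_k^{-n}(Y^{(\kappa_0)})$ is, by construction, the filled Julia set of the polynomial-like part straightening to $g$, hence a nondegenerate continuum sitting well inside $Y^{(\kappa_0)}$, and by choosing the surgery support to avoid the forward orbits we ensure $K(\widetilde F_k)$ is disjoint from $\bigcup_n\widetilde f_k^{-n}(\mathcal B_k\cup\mathcal A_k)$; the ``at most $T_k$ times'' clause is inherited because $\widetilde F_k$'s orbits are a sub-collection of $\widetilde f_k$'s orbits that never reach $\mathcal B_k$. The invariant ray $\gamma_k$ is obtained by taking $\mathcal R_{f_0}(\theta_0)$ outside $Y^{(\kappa_0)}$ and extending it through the surgery annulus as the $\widetilde f_k^{m_0}$-invariant internal-to-external connecting arc landing on the fixed point $\alpha^{-1}(0)\in K(\widetilde F_k)$; since the surgery was designed to respect the external marking, the hybrid conjugacy $\psi_k$ between $P_k$ and $g$ can be normalized to send $\gamma_k$ to $\mathcal R_g(0)$.

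The main obstacle I expect is the simultaneous control of three competing requirements: (a) the surgery support must be deep enough (inside $Y^{(k)}$) to preserve combinatorics up to depth $k$, (b) the nice set $\mathcal B_k$ must be $\widetilde f_k$-free and satisfy the quasiconformal-distortion bound, which forces it to be a \emph{very} deep puzzle piece near $\tau_0$, and (c) the orbit of the free Julia critical point $c_0$ can accumulate on $\partial U_0$ and on the boundaries of all these puzzle pieces, so one must verify that after the surgery the critical value dynamics of $\widetilde f_k$ near $c_0$ still permits the landing domains to be pulled back univalently to the required depth. Reconciling (a)–(c)—essentially the same difficulty that Theorem~\ref{thm:Kqcpuzzle'} and the careful choice of the neighborhood of $\tau_0$ in \S\ref{sec:Kahn} were designed to handle—while keeping the quasiconformal dilatation of $\widetilde f_k$ bounded (independently of the number of surgery pullbacks, using the $M_k$-nice structure and the Kahn-type distortion bound) is where the real work lies; once this is in place, assembling the five bullet points is routine bookkeeping following \cite[\S5]{SW}.
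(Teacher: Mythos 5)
There is a genuine gap at the very point you flag as ``the main obstacle'': you acknowledge that the orbit of the free Julia critical point $c_0$ may accumulate on $\partial U_0$ and on the boundaries of the relevant puzzle pieces, but your proposed fix --- that ``$\widetilde f_k$-freeness is arranged by choosing the depth large'' for a puzzle piece $\mathcal B_k$ near $\tau_0$ --- does not work. If $c_0$ is combinatorially recurrent (or if $\tau_0\in\omega(c_0)$), the forward orbit of $c_0$ enters every candidate puzzle piece, at every depth, so no choice of depth makes $\mathcal B_k$ disjoint from the postcritical set of $\widetilde f_k$ unless the postcritical set itself is altered. The paper's key idea, absent from your proposal, is an \emph{additional} surgery on an annulus $A=Y^{(N_0)}\setminus\overline{Y^{(N_0+i_0m_0)}}$: at the first moment $s_0$ with $x_0=f_0^{s_0}(c_0)\in A$, one redefines the map on $A$ so that $\widetilde f_k(x_0)=y_0$ lies in the basin of infinity with $G_{f_0}(y_0)>d^{-N_0}$. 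After this, the $\widetilde f_k$-orbit of $c_0$ escapes, the postcritical set meets $Y^{(N_0)}$ only in the single point $x_0$, and one checks directly that $x_0\notin\overline{\mathcal B_k}$ (Fact~1 in the paper). Without this redirection of the free critical orbit, the freeness of $\mathcal B_k$ --- and hence the applicability of the Thurston-algorithm machinery of \cite{SW} --- simply fails in the recurrent case.

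A second, related discrepancy: your candidate $\mathcal B_k$ (a deep puzzle piece around $\tau_0$) would not yield the third bullet. The paper instead takes $\mathcal B_k$ to be the union of those components of the first-entry domain to a set $\Omega\subset Y^{(\ell)}\setminus\overline{Y^{(\ell+m_0)}}$ which intersect the \emph{return domain} $R(\mathcal A_k)$ of the support $\mathcal A_k$ of $\bar\partial\widetilde f_k$. With that choice, $R(\mathcal A_k)\subset\mathcal B_k$, so any orbit avoiding $\mathcal B_k$ meets $\mathcal A_k$ at most once --- the bound $T_k$ is immediate from the definition rather than from a separate combinatorial argument. The $M_k$-niceness is then verified by transporting Theorem~\ref{thm:Kqcpuzzle'} along the (uniformly quasiconformal, bounded-time) first-entry maps, much as you sketch; that part of your outline is consistent with the paper. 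But the construction of $\mathcal B_k$ and the annulus surgery controlling the free critical orbit are the substance of the proof, and both are missing.
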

We postpone the proof of this lemma to the end of this subsection and show now how it implies Theorem~\ref{thm:sequence}. 
 \begin{proof}[Proof of Theorem~\ref{thm:sequence}]
 Note that $\widetilde f_k$ satisfies the assumption of~\cite[Theorem~5]{SW}. Thus it follows there exists a polynomial $f_k\in \mathcal P_{d_1+d_2}$ and a continuous surjection $h_k:\C\to \C$ such that $f_k\circ h_k=h_k\circ \widetilde f_k$. Moreover, $h_k$ coincides with some qc map $\lambda_k$ on  $\C\setminus \bigcup_{n=0}^\infty \widetilde{f}_k^{-n}(\mathcal{B}_k)$ and $h_k$ is comformal in a neighborhood of $\infty$. Since $\widetilde f_k=f_0$ outside $Y^{(k)}$ and  $h_k$ is a conformal conjugacy between $f_k$ and $\widetilde f_k$ near infinity, $f_k\circ h_k=h_k\circ f_0$  in a neighborhood of $\infty$. Thus $h_k=\lambda_k$ is identity  in the  B\"ottcher coordinate for $f_0$ and $f_k$ near infinity. This implies the $f_k^{m_0}$-invariant ray $h_k(\gamma_k)$ is the external ray $\mathcal R_{f_k}(\theta_0)$.
 
 First we show that $\lambda_k$ is a qc weak pseudo-conjugacy between $f_0$ and $f_k$ up to depth $k$. Let $\mathcal Y_k$ denote the set of puzzle pieces of depth $k$ and let $\mathcal V_k=\C\setminus \bigcup_{Y\in \mathcal Y_k} Y$. It remains to show that $\lambda_k\circ f_0=f_k\circ \lambda_k$ on $\mathcal V_k$. It suffices to show  $\lambda_k=h_k$ on $\mathcal V_k$, which follows easily from the fact that $\mathcal V_k$ is disjoint from  $\bigcup_{n=0}^\infty \widetilde{f}_k^{-n}(\mathcal{B}_k)$. Indeed,  $\mathcal V_k$ is disjiont from   $\bigcup_{n=0}^\infty \widetilde{f}_k^{-n}(Y^{(k)})$  since $\widetilde f_k=f_0$ outside $Y^{(k)}$. 
 
 Now we turn to prove that $\max \{G_{f_k}(c')\mid c'~\text{is a critical point of}~f_k\}\le 1/d^k$, where $d=d_1+d_2$. Note that $\phi:=\lambda^{-1}_k$ is qc and identity in the  B\"ottcher coordinate for $f_k$ and $f_0$ on $\{z\mid G_{f_k}(z)>1/d^k\}$. Suppose, contrary to our claim, that there is a critical point $c_k$ of $f_k$ with $G_{f_k}(c_k)>1/d^k$.  Then there exists a small disk $D_k$ around $c_k$
 such that $G_{f_k}(z)>1/d^k$ for all $z\in D_k$. Then $\phi\circ f_k=f_0\circ \phi$ on $D_k$ since $D_k$ is disjoint from $\bigcup_{n=0}^\infty \widetilde{f}_k^{-n}(\mathcal{B}_k)$. It follows $\phi(c_k)$ is a critical point of $f_0$ and $G_{f_0}(\phi(c_k))=G_{f_k}(c_k)>1/d^k>0$. Thus $\phi(c_k)$ is in the basin of infinity of $f_0$. This is absurd since $J_{f_0}$ is connected.

 We proceed to show that $F_k:=f^{m_0}_k:h_k(Y^{(\kappa_0)})\to h_k(Y^{(\kappa_0-m_0)})$ is an APL map with a polynomial-like restriction hybrid equivalent to $g$. Since $Y^{(\kappa_0)}$ and $Y^{(\kappa_0-m_0)}$ are nice w.r.t. $\widetilde f_k$ and $\mathcal B_k\Subset Y^{(\kappa_0)}\subset Y^{(\kappa_0-m_0)}$, $\partial Y^{(\kappa_0)}$ and $\partial Y^{(\kappa_0-m_0)}$ lie outside $\bigcup_{n=0}^\infty \widetilde{f}_k^{-n}(\mathcal{B}_k)$. Thus $h_k=\lambda_k$ on $\partial Y^{(\kappa_0)}\cup \partial Y^{(\kappa_0-m_0)}$. Moreover, $h_k=\lambda_k$ and $\bar\partial \lambda_k=0$ a.e. on $K(\widetilde F_k)$ since $K(\widetilde F_k)$ is disjoint from $\bigcup_{n=0}^\infty\widetilde{f}_k^{-n}(\mathcal{B}_k\cup \mathcal A_k)$. Note that $\widetilde{f}_k^{-n}(\mathcal{B}_k)$ is a countable union of disjoint Jordan domains which are nice w.r.t $\widetilde f_k$ and $h_k=\lambda_k$ on $\C\setminus \widetilde{f}_k^{-n}(\mathcal{B}_k)$. It follows $\lambda_k$ is homotopic to $h_k$ rel $K(\widetilde F_k)$. Therefore, by homotopy lifting,  there is a sequence of qc maps $H_j:Y^{(\kappa_0-m_0)} \to \lambda_k(Y^{(\kappa_0-m_0)})$ homotopic to $h_k$ rel $K(\widetilde F_k)$ such that $H_j=\lambda_k$ on $Y^{(\kappa_0-m_0)}\setminus\overline{Y^{(\kappa_0)}}$ and  $F_k\circ H_{j+1}=H_j\circ \widetilde F_k$. Since  $F_k$ is holomorphic and each orbit of $\widetilde F_k$  can pass through $\mathcal A_k$ at most $T_k$ times, the maximal dilatation of $H_j$ is uniformly bounded. As $H_j(z)$  eventually stablizes for all $z\in Y^{(\kappa_0-m_0)}$, $H_j$ converges to a qc map $H$. Clearly, $\bar\partial H=0$  holds a.e. on $K(\widetilde F_k)$ since  so does $H_j$ for all $j$. Assume $P_k=\widetilde F_k|_{\Omega_k}:\Omega_k\to \Omega'_k$ is a polynomial-like restriction of $\widetilde F_k$ which is hybrid equivalent to $g$ and $\psi_k$ is a hybrid conjugacy between $P_k$ and $g$ which sends $\gamma_k$ to $\mathcal R_{g}(0)$ . Then $F_k|_{H(\Omega_k)}:H(\Omega_k) \to H(\Omega'_k)$ is also a polynomial-like map hybrid equivalent to $g$. Note that $H(\gamma_k\cap Y^{(\kappa_0)})=h_k(\gamma_k)\cap H(Y^{(\kappa_0)})$, which is a part of the external ray $\mathcal R_{f_k}(\theta_0)$. Thus $\psi_k\circ H^{-1}$ sends the external ray  $\mathcal R_{f_k}(\theta_0)$ to $\mathcal R_{g}(0)$.
  \end{proof}
 
Now we prove Lemma~\ref{lem:surgery} to finish this subsection.

\begin{proof}[Proof of Lemma~\ref{lem:surgery}] Since $f_0$ is primitive and non-renormalizable at $c_0$, the critical orbit $\mathrm{orb}(c_0)\cap \overline{U_0} =\emptyset$. By Theorem~\ref{thm:puzzle}, $Y^{(n)}$ shrinks to $\overline{U_0}$ as $n$ tends to infinity.
 Thus for any positive integer $k> \kappa_0$, we can choose a positive integer $\ell=\ell(k)>k$ with the following properties:
 \begin{itemize}
 \item $\overline {Y^{(\ell)}},\overline{f_0(Y^{(\ell)})},\ldots, \overline{f^{m_0-1}_0(Y^{(\ell)})}$ are disjoint;
 \item $f^{m_0}_0:Y^{(\ell+m_0)} \to Y^{(\ell)}$ is an APL map with filled Julia set $\overline{U_0}$.
  \end{itemize}
  By Theorem~\ref{thm:Kqcpuzzle'}, there exists a positive integer $N_0=N_0(\ell)>\ell+m_0$ such that 
\[\max \{\mathcal{QD}(L_{N_0}\cap Y^{(\ell)},Y^{(\ell)}), \mathcal{QD}(L_{N_0}\cap X^{(\ell)},X^{(\ell)}\}<\infty.\]
It follows from Theorem~\ref{thm:puzzle} that there exists a positive integer $i_0$ such that $Y^{(N_0+i_0m_0)} \Subset Y^{(N_0)}$. 
Let $A$  denote the annuli $Y^{(N_0)} \setminus \overline{Y^{(N_0+i_0m_0)}}$.
%
%
%
%

{\bf Case 1.}There exists a first moment $s_0$ such that $x_0:=f^{s_0}_0(c_0) \in A$. 

We define a quasiregular map $\widetilde f_0:A \to f_0(A)$ such that $d^{-N_0}<G_{f_0}(\widetilde f_0(x_0))<{d^{-N_0+1}}$, $d=d_1+d_2$ and $\widetilde f_0=f_0$ on $\mathcal R_{f_0}(\theta_0)\cap A$ as following. Fix $y_0\in f_0(A')$ with $G_{f_0}(y_0)>d^{-N_0}$. Define $\widetilde f_0=f_0$ on $\partial A\cup( \mathcal R_{f_0}(\theta_0)\cap A)$ and interpolate quasiregularly on $A$ so that $\widetilde f_0(x_0)=y_0$. 

 Now choose $r_0$ large enough so that $\{c_0, f_0(c_0),\ldots, f^{s_0}_0(c_0)\} \cap \overline{Y^{(r_0)}}=\emptyset$. Note that $f^{m_0}_0:Y^{(r_0+m_0)} \to Y^{(r_0)}$ is an APL map with filled Julia set $\overline{U_0}$. Since $\overline{U_0} \Subset  Y^{(r_0+m_0)}$,  by \cite[Lemma~2.4]{LY}, there exist quasidisks $U \Subset V \subset Y^{(r_0+m_0)}$ such that $f^{m_0}_0:U\to V$ is a polynomial-like map with filled Julia set $\overline{U_0}$. Then take a polynomial-like restriction $g|_{\widehat W}:\widehat W\to \widehat U$ of $g$. The quasidisk $U$  can be chosen so that $\mathcal R_{f_0}(\theta_0)$  intersects $\partial U$  transversally (at a unique point). Let $\xi$  denote the corresponding intersecting point. Similarly, we can assume $\mathcal R_g(0)$ intersects $\partial \widehat U$ at a unique point $\zeta$. Let $\xi'\in \mathcal R_{f_0}(\theta_0)$ and $\zeta'\in \mathcal R_g(0)$ so that $f^{m_0}_0(\xi')=\xi$ and $g(\zeta')=\zeta$.
  Let $\varphi:U\to \widehat U$ be the unique conformal map from $U$ onto $\widehat U$ with $\varphi(\xi')=\zeta'$ and $\varphi(\xi)=\zeta$. Set $W=\varphi^{-1}(\widehat W)$.

Let us define $\widehat F_k= \varphi^{-1}\circ g\circ \varphi$ on $\overline{W}$ and $\widehat F_k=f^{m_0}_0$ on $\partial U$. Then we interpolate quasiregularly on $U\setminus \overline{W}$ so that $\widehat F_k(\varphi^{-1}(\mathcal R_g(0)\cap(\widehat U\setminus \widehat W)))=f^{m_0}_0(\mathcal R_{f_0}(\theta_0))$.  Let $\mathcal I$ denote the inverse map of the first entry map $f^{m_0-1}_0:f_0(U)\to V$ which is conformal. Then we define $\widetilde f_k$ as following:
\[\widetilde f_k(z)=
\begin{cases}
\widetilde f_0(z), & z\in A\\
\mathcal I \circ \widehat F_k(z), & z\in U\\
f_0(z), & z\in \C \setminus (A\cup U). 
\end{cases}\]
Note that $\widetilde f_k(\varphi^{-1}(\mathcal R_g(0)\cap(\widehat U\setminus \widehat W)))=f_0(\mathcal R_{f_0}(\theta_0))$.

In this case, $\bar\partial \widetilde f_k=0$ holds almost everywhere outside  $\mathcal A_k={A} \cup U\setminus \overline{W}$. Let $R(\mathcal A_k)$ denote the return domian to $\mathcal A_k$ under $\widetilde f_k$.  For any $z\in \mathcal A_k$, there exists a smallest positive integer $\iota(z)$ such that $\widetilde f_k^{\iota(z)}(z) \in Y^{(\ell)}\setminus \overline{Y^{(\ell+m_0)}}$. It is easy to see $T:=\sup_{z\in \mathcal A_k} \iota(z) <\infty$.
 Set
\[\Omega:=\{z\in Y^{(\ell)}\setminus \overline{Y^{(\ell+m_0)}}\mid \exists j>1~\text{such that}~\widetilde f_k^{j}(z) \in Y^{(\ell)}\cup X^{(\ell)}\}.\]
Let ${\mathcal B}_k$ be the union of the component of  $D(\Omega)$ which intersects $R(\mathcal A_k)$, where $D(\Omega)$ is the domian of the  first entry map to $\Omega$ under $\widetilde f_k$.

\medskip
{\bf Fact 1.} {\em $\mathcal B_k$ is $\widetilde f_k$-free}.

Clearly, $\mathcal B_k \subset Y^{(N_0)}$ since $A\subset Y^{(N_0)}$ and $\partial Y^{(N_0)} $ is nice with respect to $\widetilde f$. Note that $\widetilde f_k^{j}(c_0)=f^{j-s_0-1}_0\circ \widetilde f_0(x_0)$ for all $j>s_0$ by the definition of $\widetilde f_k$.
Thus $G_{f_0}(\widetilde f_k^{j}(c_0)) >d^{-N_0}$ for all $j>s_0$, which  implies $\widetilde f_k^{j}(c_0)\notin \overline{Y^{(N_0)}}$ for all $j>s_0$. So $Y^{(N_0)} \cap \{\widetilde f^j(c_0)\}_{ j\in \mathbb N }=\{x_0\}$ by the definition of $s_0$ and $\widetilde f_k$. 
It suffices to show $x_0 \notin \overline{\mathcal B_k}$. If $x_0\in \overline{\mathcal B_k}$, then there exists a sequence $\{z_j\}\subset \mathcal B_k$ such that $z_j\to x_0$.  So $\widetilde f_k(z_j) \to f_0(y_0)$. Since $d^{-N_0}<G_{f_0}(y_0)<d^{N_0+1}$, $d^{-N_0}<G_{f_0}(\widetilde f_k(z_j))<d^{-N_0+1}$ for all $j$ large. Thus the orbit of  $\widetilde f_k(z_j)$ under $\widetilde f_k$ can never pass  through $Y^{\ell}$ after it enters into $Y^{(\ell)}\setminus \overline{Y^{(\ell+m_0)}}$.  This implies $\widetilde f_k(z_j) \notin D(\Omega)$  and so does $z_j$ for $j$ large, a contradiction.

\medskip

{\bf Fact 2.} {\em $\mathcal B_k$ is $M_k$-nice with respect to $\widetilde f_k$ for some $M_k>0$}.

We first observe that $\Omega$ and hence $\mathcal B_k$ is nice with respect to $\widetilde f_k$. Fix a component $B$ of $\mathcal{B}_k$, let $s'$ be the first entry time of $B$ into $\Omega$, and let $t$ denote the return time of $\widetilde{f}_k^{s'}(B)$ into $Y^{(\ell)}\cup X^{(\ell)}$. Then $\widetilde{f}_k^{s'+t}$ maps $B$ homeomorphically onto a component of $Y^{(\ell)}\cup X^{(\ell)}$
and the map $\widetilde{f}_k^{s'+t}|_B$ is $K^T$-qc, where $K$ is the maximal dilatation of $\widetilde f_k$. Indeed, $s'\le T$ since $B$ intersect $\mathcal A_k$. 
Note that 
$\widetilde{f}_k^{s'+t} (D(\mathcal{B}_k)\cap B)\subset L_{N_0}$ since $\mathcal B_k\subset  Y^{(N_0)}$. It follows from Lemma~\ref{qc1} and Theorem~\ref{thm:Kqcpuzzle'}
that
\[\mathcal{QD}(D(\mathcal {B})\cap B, B) \le (K^T)^{2}\max \{\mathcal{QD}(L_{N_0}\cap Y^{(\ell)},Y^{(\ell)}), \mathcal{QD}(L_{N_0}\cap X^{(\ell)}, X^{(\ell)})\}=:M'_k<\infty.\]
Note that there is $M>0$ such that \[M^2\mathrm{area}(Y^{(\ell)}\setminus L_{N_0})\ge M \mathrm{area}(Y^{(\ell)})\ge \mathrm{diam}(Y^{(\ell)})^2\]
and 
 \[M^2\mathrm{area}(X^{(\ell)}\setminus L_{N_0})\ge M \mathrm{area}(X^{(\ell)})\ge \mathrm{diam}(X^{(\ell)})^2\]
since $Y^{(\ell)}$ and $X^{(\ell)}$ are quasidisks and $(Y^{(\ell)}\setminus L_{N_0}) \cap D_{f_0}(\infty)\ne \emptyset$, $(X^{(\ell)}\setminus L_{N_0}) \cap D_{f_0}(\infty)\ne \emptyset$.
As $\widetilde f^{s'+t}_k|_B$ has a $K^{T}$-qc extension from $B$ onto a neighborhor of $\widetilde f^{s'+t}_k(B)$, there is $M'>0$ (only depends on $M$ and $K^T$) such that
\[M'^2\mathrm{area}(B)\setminus D(\mathcal B_k))\ge M' \mathrm{area}(B)\ge \mathrm{diam}(B)^2\] by area distortion of $K^T$-qc maps.
Indeed, $\partial Y^{(\ell)} $ and $\partial X^{(\ell)}$ does not intersect the orbit of the ramification points of $\widetilde f_k$. Thus there are definite neighborhoods $V(0)$ and $V(c_0)$ of  $\partial Y^{(\ell)} $ and $\partial X^{(\ell)}$ respectively so that $P(\widetilde f_k)\cap (V(0) \setminus Y^{(\ell)} )=\emptyset$ and   $ P(\widetilde f_k)\cap (V(c_0) \setminus X^{(\ell)}) =\emptyset$. Hence $\widetilde f^{s'+t}_k|_B$ has a $K^{T}$-qc extension from $B$ onto a definite neighborhood of $Y^{(\ell)}$ or $X^{(\ell)}$. Let $M_k=\max \{M'_k,M'\}$. Then $\mathcal B_k$ is $M_k$-nice.

\medskip

{\bf Fact 3.}  {\em For every $z$ and $n\ge 1$,  if $\widetilde{f}_k^j(z)\not\in\mathcal{B}_k$ for each $0\le j <n$, then  $\#\{0\le j<n:  \widetilde f^j_k(z) \in  \mathcal A_k\}\le 1$.}

By the definition of $\mathcal B_k$, $R(\mathcal A_k) \subset \mathcal B_k$. So if $\widetilde{f}_k^j(z)\not\in\mathcal{B}_k$ for each $0\le j <n$, then $\widetilde{f}_k^j(z)\not\in R(\mathcal A_k)$ for each $0\le j <n$. This implies $\widetilde{f}_k^j(z)\not\in \mathcal A_k$ for each $1\le j <n$.

\medskip

{\bf Fact 4.} {\em  $\widetilde F_k:=\widetilde f^{m_0}_k:Y^{(\kappa_0)} \to Y^{(\kappa_0-m_0)} $ is a quasiregular map of degree $d_1$ with a polynomial-like restriction which is hybrid equivalent to $g$.  Moreover, $K(\widetilde F_k):=\bigcap_{n=0}^{\infty} \widetilde F^{-n}_k(W_k) $ is  disjoint from $ \bigcup_{n=0}^\infty\widetilde{f}_k^{-n}(\mathcal{B}_k\cup \mathcal A_k)$ and each orbit of $\widetilde F_k$ passes through $\mathcal A_k$ at most $T+1$ times.}

Since $\widetilde f_k: Y^{(\kappa_0)}  \to f_0(Y^{(\kappa_0)}) $ is a qusiregular map of degree $d_1$ and $f^{m_0-1}_0: f_0(Y^{(\kappa_0)})  \to Y^{(\kappa_0-m_0)} $ is a univalent map,  $\widetilde f^{m_0}_k:Y^{(\kappa_0)}  \to Y^{(\kappa_0-m_0)} $ is a quasiregular map of degree $d_1$. Note that $\widetilde f^{m_0}_k|_W=\widehat F_k|_W=\varphi^{-1}\circ g\circ \varphi|_W$ is conformally conjugate to $g$ and $K(\widetilde F_k)=K(\widehat F_k|_W)=\varphi^{-1}(K(g))$. Clearly, $\varphi^{-1}(K(g))$  is  disjoint from $ \bigcup_{n=0}^\infty\widetilde{f}_k^{-n}(\mathcal{B}_k\cup \mathcal A_k)$. For each point $z\in Y^{(\kappa_0)}  \setminus K(\widetilde F_k)$, its orbit will first pass through  $Y^{(\ell)}\setminus \overline{Y^{(\ell+m_0)}}$ and eventually pass through $Y^{(\kappa_0-m_0)}\setminus\overline{Y^{(\kappa_0)}}$ under $\widetilde F_k$. Let $t'$ be the first moment so that $\widetilde F_k^{t'}(z) \in \mathcal A_k$. Then the orbit of $\widetilde F_k^{t'}(z)$ under $\widetilde F_k$ can pass through $\mathcal A_k$ at most $\iota(F_k^{t'}(z)) \le T$ times.

Let $\gamma_k$ denote the union of $\mathcal R_{f_0}(\theta_0)\setminus U$ and $\varphi^{-1}(\mathcal R_g(0))\cap U$.
Clearly, $\varphi$ is a hybrid conjugacy between $\widehat F_k|_W$ to $g$ which sends $\gamma_k$ to $\mathcal R_g(0)$.
\medskip

{\bf Case 2.} The critical orbit $\mathrm{orb}(c_0)$ does not pass through the annulus $A$. Then there exists $\ell'$ large such that $Y^{(\ell')}\cap \overline{\{c_0,f_0(c_0),\ldots\}} =\emptyset$. 
 By Theorem~\ref{thm:Kqcpuzzle'}, there exists a positive integer $N'_0=N'_0(\ell')>\ell'+m_0$ such that $\mathcal{QD}(L_{N'_0}\cap Y^{(\ell')},Y^{(\ell')})<\infty$. 

 Now choose $r_0>N'_0$. Without loss of generality, we assume $r_0$ is the same as in Case~1.  Let $W,U,\widehat W, \widehat U $, $\widehat F_k$ and $\mathcal I$ be as in Case~1. This time, we define 
 \[\widetilde f_k(z)=
\begin{cases}

\mathcal I\circ \widehat F_k(z), & z\in U\\
f_0(z), & z\in \C \setminus  U. 
\end{cases}\]
In this case, $\bar\partial \widetilde f_k=0$ holds almost everywhere outside  $\mathcal A_k=U\setminus \overline{W}$. Again let $R(\mathcal A_k)$ denote the return domian to $\mathcal A_k$ under $\widetilde f_k$.  
Let $$\Omega':=\{z\in Y^{(\ell')}\setminus \overline{Y^{(\ell'+m_0)}}\mid \exists j>1~\text{such that}~\widetilde f_k^{j}(z) \in Y^{(\ell')}\} $$ and let $\mathcal B_k$ be the union of the component of  $D(\Omega')$ which intersects $R(\mathcal A_k)$, where $D(\Omega')$ is the domian of the  first entry map to $\Omega'$ under $\widetilde f_k$. Similar to Case~1, one can check $\widetilde f_k$ is a desired map in Lemma~\ref{lem:surgery}.
 \end{proof}

 \subsection{Proof of Theorem~B}
 Fix $g\in \mathcal C_{d_1}$.
 It follows from Theorem~\ref{thm:sequence} that there exists a sequence $\{f_k\}$ of polynomials and a sequence of $\{\Phi_k\}$ of qc maps satisfies the following:
 \begin{enumerate}
 \item $\Phi_k$ is a qc weak pseudo-conjugacy between $f_0$ and $f_k$ up to depth $k$;
 \item $\max \{G_{f_k}(c')\mid c'~\text{is a critical point of}~f_k\}\le d^{-k}$;
 \item  $f_k^{m_0}:\Phi_k(Y^{(\kappa_0)}) \to \Phi_k(Y^{(\kappa_0-m_0)})$ is an APL map with a polynomial-like restriction which is hybrid equivalent to $g$.
 \end{enumerate}
By~\cite[Proposition~3.6]{BH}, $\{f_k\}$ lies in a compact set. Thus $\{f_k\}$ has a convergent subsequence. Without loss of generality, we assume $f_k\to f$ as $k\to \infty$. We now finish the proof of Theorem~B by claiming that $f$ is our desired tuning.

\begin{proof}[Proof of Theorem~B]
Note that $G_{f}(c)=0$ for any critical point $c$ of $f$ since $$\max \{G_{f_k}(c')\mid c'~\text{is a critical point of}~f_k\}\le d^{-k}$$ holds for all $k\in \mathbb N$.  Thus, $f\in \mathcal C_{d_1+d_2}$. Let $\mathcal E=\{E_j\}_{j\in \mathbb N}$ be the set of all the  equivalence classes of  $\lambda_{f_0}$. We say $E\in \mathcal E$ is admissible if  $\mathcal R_f(t)$ does not land at the grand orbit of a parabolic periodic point or a critical point of $f_0$ or $f$ for all $t\in E$. Let $\mathcal E':=\{E\in\mathcal E\mid E~\text{is admissible}\}$. We claim that for each $E\in \mathcal E'$, $E$ is contained in an equivalence class of $\lambda_{f}$.

Assume $t,s \in E\in \mathcal E'$, we  prove that $\mathcal R_f(t)$ and $\mathcal R_f(s)$  land at a common point. Let $\zeta$ and $\xi$ be the landing point of  $\mathcal R_f(t)$ and $\mathcal R_f(s)$ respectively. By the definition of $\mathcal E'$, $\zeta$ and $\xi$ are repelling periodic point of $f$. By holomorphic motion, we know there exists $\zeta_k\to \zeta$ and $\xi_k\to \xi$ so that $\mathcal R_{f_k}(t)$ and  $\mathcal R_{f_k}(s)$ land at $\zeta_k$ and $\xi_k$ respectively for all $k$ larege (see also ~\cite[Lemma~B.1]{GJ}).

{\bf Case 1.} $\mathcal R_{f_0}(t)$ does not land on $\bigcup\limits_{n=0}^{\infty} f^{-n}_0(\partial U_0)$. Recall that $U_0$ is the Fatou component of $f_0$ which contains $0$. In this case, there exists $k$ large such that $f^n_0(\mathcal R_{f_0}(t) \cup \mathcal R_{f_0}(s))$ lies outside $Y^{(k)}\cup X^{(k)}$ for all $n\in \mathbb N$. It is well known that  we can adjust the weak pseudo-conjugacy $\Phi_k$ to be a pseudo conjugacy.  See \cite[Lemma~4.3]{AKLS} and \cite[section~5]{KSS} for examples, see also Lemma~\ref{lem:pseudo}. Thus there is no loss of generality in assuming $\Phi_k$ is pseudo-conjugacy between to $f_0$ and $f_k$  up to depth $k$.  This implies that $\mathcal R_f(t)=\Phi^{-1}_k(\mathcal R_{f_0}(t))$ and $\mathcal R_f(s)=\Phi^{-1}_k(\mathcal R_{f_0}(s))$ land at a common point for all $k$ large.  So $\zeta_k=\xi_k$ for $k$ large. Hence $\zeta=\xi$.

{\bf Case 2.} $\mathcal R_{f_0}(t)$  lands on $\partial U_0$. It follows from Lemma~\ref{lem: landing on boundary} that $\zeta_k=\xi_k$ for all $k$, and so $\zeta=\xi$.

{\bf Case 3.} $\mathcal R_{f_0}(t)$  lands on $f^{-k_0}_{0}(\partial U_0)$ for some $k_0\ge 1$. Without loss of generality, we assume $k_0$ is the first moment so that $f^{k_0}_0(\mathcal R_{f_0}(t))$ lands on $\partial U_0$.  Let $k_1$ be the smallest positive integer such $f^j_0(\mathcal R_{f_0}(t)\cup \mathcal R_{f_0}(s))$ lies outside $Y^{(k_1)}\cup X^{(k_1)}$ for $0\le j<k_0-1$. Such an integer $k_1$ exists since $Y^{(n)}$ shrinks to $\overline{U_0}$ and $X^{(n)}$ shrinks to $\{c_0\}$. Let $X$ be the puzzle piece of depth $k_0+k_1$ which contains the landing point of $\mathcal R_{f_0}(t)$. Then $f^{k_0}_0:X\to Y^{(k_1)}$ is a conformal map. Note that $f^{k_0}_k:\Phi^{-1}_k(X) \to \Phi^{-1}_k(Y^{(k_1)})$ is a holomorphic proper map for all $k$ large since $\Phi_k$ is a weak pseudo conjugacy between $f_0$ and $f_k$ up to depth $k$. Since $\Phi_k\circ f^{k_0}_m=f^{k_0}_k\circ \Phi_k$ on $\partial X$, $f^{k_0}_k|_{\partial \Phi^{-1}_k(X) }$ is a homeomorphism for $k$ large. Thus $f^{k_0}_k:\Phi^{-1}_k(X) \to \Phi^{-1}_k(Y^{(k_0)})$ is conformal. Moreover, $f^{k_0}_k|_{\Phi^{-1}_k(X)}$ maps the landing points of $\mathcal R_{f_k}(t)$ and $\mathcal R_{f_k}(s)$ to a common point which is a  common landing point of $f^{k_0}_k(\mathcal R_{f_k}(t))$ and $f^{k_0}_k(\mathcal R_{f_k}(s))$ (see Case~2). This implies  $\mathcal R_{f_k}(t)$ and $\mathcal R_{f_k}(s)$ land at a common point for all $k$ large, and so do  $\mathcal R_{f}(t)$ and $\mathcal R_{f}(s)$, since $f^{k_0}_k|_{\Phi^{-1}_k(X)}$ is conformal for $k$ large.

By~\cite[Lemma~8.3]{IK},  $\lambda_{f_0}$ is the smallest equivalence relation in $\mathbb Q/\Z$ contains $(\bigcup_{E\in \mathcal E'} E)\times (\bigcup_{E\in \mathcal E'} E)$. Thus  $\lambda_{f_0} \subset \lambda_{f}$.

It remains to show there exists quasidisks $U\Subset V$ and a qc map $\varphi:\C\to \C$ such that $F:=f^{m_0}|_U:U\to V$ is a polynomial-like map with filled Julia set $K(F)=K(U_0,f)$ which is hybrid equivalent to $g$ and $\varphi$ is a hybrid conjugacy between $F$ and $g$ respecting the external markings.

Let $\Theta$ denote the angles $\eta$ for which $\mathcal R_{f_0}(\eta)$ lands at $\tau_0$. By Lemma~\ref{lem:tau repelling},  for each $\eta\in \Theta$, the landing point of $\mathcal R_{f}(\eta)$ is a repelling periodic point of $f$. By Lemma~\ref{lem:weak pseudo}, there is a qc weak pseudo-conjugacy $\Psi$ between $f_0$ and $f$ up to depth $\kappa_0$. Note that $f^{m_0}_0:Y^{(\kappa_0)}\to Y^{(\kappa_0-m_0)}$ is holomorphic proper map of degree $d_1$, so is $F:=f^{m_0}:\Psi(Y^{(\kappa_0)})\to \Psi(Y^{(\kappa_0-m_0)})$. Since $J_f\cap \partial \Psi(Y^{(\kappa_0)})$ are repelling (pre-)periodic points of $f$, by thickening technique~\cite{Mil4}, there are quasidisks $\Psi(Y^{(\kappa_0)})\subset U,  \Psi(Y^{(\kappa_0-m_0)})\subset V$ such that $f^{m_0}:U\to V$ is a polynomial-like map of  degree $d_1$. Similarly, there are quasidisks $\Phi_k(Y^{(\kappa_0)})\subset U_k,  \Phi_k(Y^{(\kappa_0-m_0)})\subset V_k$ such that  $F_k:=f^{m_0}_k:U_k\to V_k$ is a polynomial-like map of degree $d_1$.  Moreover, we can choose $U_k$, $V_k$ carefully so that $\mathrm{mod}(V_k\setminus \overline{U_k})\ge  \mathrm{mod}(V\setminus\overline{U})/2$ for $k$ large since $f_k\to f$.  By Theorem~\ref{thm:sequence}, there exists qc maps $\psi_k:\C\to \C$ such that $\psi_k$ is a hybrid conjugacy between $F_k$ and $g$ and $\psi_k$ sends the homotopy class of $\mathcal R_{f_k}(\theta_0)$ to the homotopy class of $\mathcal R_g(0)$. Note that we can assume $\psi_k$ are uniformly $K$-qc for some $K\ge 1$ since $\mathrm{mod}(V_k\setminus\overline{U_k}) \ge  \mathrm{mod}(V\setminus\overline{U})/2$ for all $k\in \mathbb N$. By the compactness of normalized $K$-qc maps, without loss of generality, we may assume $\psi_k$ converges uniformly to some $K$-qc map $\varphi$.
Note that $F_k\circ\psi^{-1}_k=\psi^{-1}_k\circ g$ near $K(g)$ and $\bar\partial\psi^{-1}_k=0$ a.e. on $K(g)$. Thus  $F\circ \varphi^{-1}=\varphi^{-1}\circ g$ near $K(g)$ and $\bar\partial\varphi^{-1}=0$ a.e. on $K(g)$. This implies $\varphi$ is a hybrid conjugacy between $F$ and $g$.

We now proceed to show that $\varphi$ sends the homotopy class of $\mathcal R_{f}(\theta)$ to the homotopy class of $\mathcal R_g(0)$. One can easily check that $\overline{\mathcal R_{f_k}(\theta_0)}$ converges uniformly to $\overline{\mathcal R_f(\theta_0)}$ in the sense of Hausdorff topology by using the fact that $\psi_k$ converges uniformly to $\varphi$ and the local dynamical property near the landing point of $\mathcal R_g(0)$. Thus $\varphi^{-1}\circ\psi_k(\mathcal R_{f_k}(\theta_0))$ is homotopic to $\mathcal R_{f}(\theta_0)$ in the sense of paths to $K(F)$ for $k$ sufficiently large.  On the other hand, $\varphi^{-1}\circ\psi_k(\mathcal R_{f_k}(\theta_0))$ is homotopic to $\varphi^{-1}(\mathcal R_g(0))$ since $\psi_k(\mathcal R_{f_k}(\theta_0))$ is homotopic to $\mathcal R_g(0)$ by the definition of $\psi_k$.
Thus $\mathcal R_f(\theta_0)$ is homotopic to $\varphi^{-1}(\mathcal R_g(0))$.

The proof is complete by showing $K(F)=K(U_0,f)$. This is easy by using the weak pesudo-conjugacies between $f_0$ and $f$. The details are left to the readers.

\end{proof}

\begin{Remark}There is another way to prove Theorem~B (suggested by Hiroyuki Inou). One can first find a sequence $\{F_k\}\subset \mathcal C_{d_1+d_2}$ of post-critically finite polynomials such that $F_k$ has the same combinatorics as $f_0$ up to depth $k$ for all $k\in \mathbb N$. See \cite[Section~7]{Kiwi} for a combinatorial argument and this can also been done by qc surgery. Then we use qc surgery to construct quasiregular maps $\widetilde F_k$ satisfies all the conditions  in Lemma~\ref{lem:surgery} with $f_0$ and $\widetilde f_k$ replaced by $F_k$ and $\widetilde F_k$ respectively. The construction is essentially the same as that in Case~2 in the proof of Lemma~\ref{lem:surgery}. By~\cite[Theorem~5.1]{SW}, there exists $f_k\in \mathcal C_{d_1+d_2}$ such that $f_k$ has the same combinatorics as $f_0$ up to depth $k$ and $f_k^{m_0}$ has a polynomial-like restriction which is hybrid equivalent to $g\in \mathcal C_{d_1}$. Again, the limit $f$ of $f_k$ is our desired tuning. We mention here that  using this method one can prove Theorem~B more easily when $d_1=2$. Indeed, $\mathcal C(\lambda_{F_k})$ is an analytic family for all $k\in \mathbb N$. So one may apply Douady-Hubbard's Theorem to obtain $f_k$ when $d_1=2$.
\end{Remark}

\begin{Remark}To fix the idea and not to make the notations too complicate, we only deal with the case that $f_0$ has one periodic critical point and one Julia critical point. Actually the method of the proof of Theorem~B also works for the following case. Let $f\in \mathcal C_{d}$ be a polynomial with only periodic critical points and non-renormalizable Julia critical points. Following~\cite{IK}, one can define the reduced mapping scheme $T(f)$ for $f$. It can be proved by the same method as in proof of Theorem~B that  $f$ can be tuned with any generalized polynomial $g\in \mathcal C(T(f))$. The proof is left to the readers who are interested in it.
\end{Remark}

\section{Bijectivity for the Straightening map}\label{sec:homeomorphism}
Let $f_0$ and $\alpha$ be given as in the assumption of Theorem~A.

For any $f\in \mathcal C(\lambda_{f_0})$, by Lemma~\ref{lem:weak pseudo}, there is a qc weak pseudo-conjugacy $\Psi_f$ between $f$ and $f_0$ up to depth $\kappa_0$. Note that $f^{m_0}: Y^{(\kappa_0)}\to Y^{(\kappa_0-m_0)}$ is an APL map of degree $d_1$,  and so is $F:=f^{m_0}:\Psi_f(Y^{(\kappa_0)})\to \Psi_f(Y^{(\kappa_0-m_0)})$. By \cite[Lemma~2.4]{LY},  $F$ has a polynomial-like restriction with connected filled Julia set $K(F)$. By the discussions in section~2, there is a unique external marking $\Gamma$ for $F$ induced by the internal angle system $\alpha$. It follows Douady-Hubbard Straightening Theorem that  there is a unique polynomial $\chi(f)\in \mathcal C_{d_1}$ such that the polynomial-like restriction of $F$ is hybrid equivalent to $\chi(f)$ if the external markings are respected (the conjugacy sends $\Gamma$ to the standard marking for $\chi(f)$). Therefore, we have a well defined map:
\[\chi:\mathcal C(\lambda_{f_0}) \to \mathcal M, f\mapsto \chi(f).\]
We call $\chi$  {\em the straightening map induced by $f_0$}. Clearly, $f$ is a tuning of $f_0$ by $\chi(f)$. Theorem~B  implies that:
\begin{Theorem}\label{thm:surjection}The staightening map $\chi$ is a surjection.  
\end{Theorem}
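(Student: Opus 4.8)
The plan is to read off surjectivity directly from Theorem~B, the only remaining work being to reconcile the polynomial-like germ that enters the definition of $\chi$ with the one supplied by the tuning.

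First I would fix an arbitrary $g \in \mathcal{C}_{d_1}$ and apply Theorem~B to obtain $f \in \mathcal{C}(\lambda_{f_0})$ together with quasidisks $U' \Subset V'$ such that $F' := f^{m_0}|_{U'} : U' \to V'$ is a $\lambda_{f_0}$-renormalization for $f$ --- so that $K(F') = K(U_0, f)$ --- which is hybrid equivalent to $g$ via a conjugacy $\varphi$ carrying the external marking for $F'$ induced by $\alpha$ to the standard external marking for $g$. It then remains to check that $\chi(f) = g$. By construction $\chi(f)$ is computed from a polynomial-like restriction of $F := f^{m_0} : \Psi_f(Y^{(\kappa_0)}) \to \Psi_f(Y^{(\kappa_0 - m_0)})$, where $\Psi_f$ is the weak pseudo-conjugacy furnished by Lemma~\ref{lem:weak pseudo}. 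I would use $\Psi_f$ together with the shrinking $\bigcap_n Y^{(n)} = \overline{U_0}$ from Theorem~\ref{thm:puzzle} to show $K(F) = K(U_0, f) = K(F')$; this is exactly the identity flagged (and left to the reader) at the end of the proof of Theorem~B. Consequently $F$ and $F'$ are two $\lambda_{f_0}$-renormalizations of $f$ with the same filled Julia set, hence hybrid equivalent, and the two induced external markings are identified since that marking depends only on $f$ and $\alpha$ (see~\cite[Remark~3.16]{IK}). Pre-composing $\varphi$ with this conjugacy yields a hybrid conjugacy from the defining polynomial-like restriction of $F$ to $g$ that respects external markings, so $\chi(f) = g$ by the uniqueness part of the Douady--Hubbard Straightening Theorem.

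Since $g$ was arbitrary, $\chi$ is onto. The genuine difficulty is all contained in Theorem~B; the step I expect to require the most care here is the verification that $K(F) = K(U_0, f)$, that is, that the germ obtained by transporting the Yoccoz puzzle of $f_0$ via $\Psi_f$ is indeed the $\lambda_{f_0}$-renormalization germ of $f$ --- though this should be a short argument using the weak pseudo-conjugacies between $f_0$ and $f$.
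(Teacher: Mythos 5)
Your proposal is correct and follows the same route as the paper, which deduces surjectivity directly from Theorem~B after noting that the polynomial-like restriction defining $\chi(f)$ and the $\lambda_{f_0}$-renormalization produced by the tuning share the filled Julia set $K(U_0,f)$ (the identity the paper itself leaves to the reader at the end of the proof of Theorem~B) and hence the same straightening with respect to the marking induced by $\alpha$. You simply spell out the reconciliation that the paper compresses into ``Clearly, $f$ is a tuning of $f_0$ by $\chi(f)$. Theorem~B implies that\ldots''.
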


%

Now we use the arguments developed in~\cite{AKLS} to show the injectivity of $\chi$:

\begin{Theorem}\label{thm:injection} $\chi$ is an injection.
\end{Theorem}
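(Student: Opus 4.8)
The plan is to follow the combinatorial rigidity scheme of \cite{AKLS}. Fix $f,\widetilde f\in\mathcal C(\lambda_{f_0})$ with $\chi(f)=\chi(\widetilde f)=:g$. Since both maps lie in the normalized family $\mathcal P_{d_1+d_2}$, any conformal conjugacy between them which is tangent to the identity at $\infty$ is automatically the identity; so it suffices to produce such a conjugacy, and then $f=\widetilde f$. By Lemma~\ref{lem:nondegenerate} the two maps share the same puzzle structure with respect to $Z_0$, and by the Douady--Hubbard straightening theorem the $\lambda_{f_0}$-renormalizations $F=f^{m_0}|_{Y^{(\kappa_0)}}$ and $\widetilde F=\widetilde f^{m_0}|_{Y^{(\kappa_0)}}$ are both hybrid equivalent to $g$ with the induced external markings respected. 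Composing the two hybrid conjugacies yields a qc map $\sigma$ conjugating $F$ to $\widetilde F$ near $K(U_0,f)$, with $\bar\partial\sigma=0$ a.e.\ on $K(U_0,f)$ and respecting the external markings; this $\sigma$, together with its pullbacks along the conformal branches of the iterates of $f$, will be incorporated into every approximating map on the grand orbit of $\overline{U_0}$, exactly as in the proof of Lemma~\ref{lem: landing on boundary}.

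First I would construct, for each $k$, a qc weak pseudo-conjugacy $\Phi_k$ between $f$ and $\widetilde f$ up to depth $k$ which is the B\"ottcher identity near $\infty$, agrees with $\sigma$ (and its conformal pullbacks) on the grand orbit of $K(U_0,f)$, and extends quasiconformally across the non-critical puzzle pieces using Lemma~\ref{lem:tau repelling} and the method of Lemma~\ref{lem:weak pseudo}. Next, following the Spreading Principle of \cite{AKLS}, I would adjust $\Phi_k$ on the first-landing domains $L_k\cap Y^{(k)}$ and $L_k\cap X^{(k)}$ so that it becomes an honest pseudo-conjugacy up to depth $k$ --- commuting with the dynamics everywhere off the two critical puzzle pieces of depth $k$ --- and then, along a suitable subsequence $k=k_n$, arrange that the resulting maps are \emph{uniformly} $K$-qc. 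I expect this uniform dilatation bound to be the main obstacle. It is precisely where the free Julia critical point $c_0$ causes trouble: its orbit may accumulate on $\partial Y^{(k)}$ and $\partial X^{(k)}$, so the landing domains come arbitrarily close to the boundaries of the critical pieces, and the bound must come from Kahn's quasiconformal distortion estimate (Theorem~\ref{thm:Kqcpuzzle'}), in combination with the shrinking of puzzle pieces (Theorem~\ref{thm:puzzle}, Proposition~\ref{prop:noncrpiece}, Lemma~\ref{lem:X shrinking}) and the quasi-additivity/covering-type arguments of \cite{AKLS}. Everything else in this step is bookkeeping on the puzzle.

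Finally, by compactness of normalized $K$-qc maps, a subsequence of $\{\Phi_{k_n}\}$ converges to a $K$-qc map $\Phi$. Since $Y^{(k_n)}\to\overline{U_0}$, $X^{(k_n)}\to\{c_0\}$ and all non-critical puzzle pieces shrink to points (Proposition~\ref{prop:noncrpiece}), $\Phi$ conjugates $f$ to $\widetilde f$ on all of $\mathbb C$. It then remains to show $\bar\partial\Phi=0$ a.e.: on the basin of infinity $\Phi$ is the B\"ottcher conjugacy, hence conformal; on the grand orbit of $K(U_0,f)$ it coincides with $\sigma$ up to conformal pullbacks, so $\bar\partial\Phi=0$ a.e.\ there; and on the rest of $K_f$ --- which, the grand orbit of $c_0$ being countable, consists a.e.\ of points whose nested puzzle pieces shrink to points by Proposition~\ref{prop:noncrpiece} and Lemma~\ref{lem:X shrinking} --- a standard argument (the absence of a measurable $f$-invariant line field supported off the grand orbit of $K(U_0,f)$, which follows from the shrinking of the puzzle pieces and the a priori bounds as in \cite{AKLS}) forces $\bar\partial\Phi=0$ a.e.\ as well. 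Hence $\Phi$ is $1$-qc, i.e.\ conformal; being tangent to the identity at $\infty$, it is the identity, so $f=\widetilde f$ and $\chi$ is injective.
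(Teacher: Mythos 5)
Your overall scheme is the same as the paper's (the AKLS rigidity scheme: weak pseudo-conjugacies up to depth $k$, promotion to uniformly qc pseudo-conjugacies along a subsequence, passage to a conformal limit), and you correctly locate the main difficulty at the free Julia critical point. However, there is a genuine gap exactly there, and the tool you name for closing it is the wrong one. Kahn's distortion bound (Theorem~\ref{thm:Kqcpuzzle'}) controls $\mathcal{QD}(L_N\cap Y, Y)$ for first-landing domains inside a \emph{fixed} puzzle piece; in this paper it is used only for the surgery in \S\ref{sec:qc surgery}, not for rigidity. What the injectivity proof actually needs is a \emph{uniform} qc extension of the boundary correspondence $\Lambda_{k}|_{\partial X^{(k)}}$ into $X^{(k)}$ as $k\to\infty$, and this does not follow from Theorem~\ref{thm:Kqcpuzzle'}. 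The mechanism is Lemma~\ref{lem:pullback} (the AKLS-type pullback lemma: a boundary conjugacy between proper maps of equal degree extends with dilatation controlled once the critical values lie in a definite compact subset of the range), and to apply it one must \emph{choose} the subsequence $k_n$ so that (a) the landing map $f^{k_n-\ell_0}:Y^{(k_n)}(c_0)\to Y^{(\ell_0)}(\cdot)$ has degree bounded independently of $n$, and (b) its critical value stays in a definite compact subset of the target piece. Establishing (a) and (b) is the heart of Lemma~\ref{lem:Julia} and forces a case analysis on the accumulation set of $\mathrm{orb}(c_0)$: combinatorially recurrent (where one invokes the a priori bounds of \cite{AKLS}), versus non-recurrent with $\omega(c_0)$ meeting $\partial U_0$, meeting $\tau_0$, or neither. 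Your proposal contains none of these arguments, so the uniform $K$ is asserted rather than obtained.

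A second, smaller gap is in the conformality of the limit. The Beltrami coefficient of $\Phi$ is indeed $f$-invariant, but replacing the paper's argument by "absence of an invariant line field" does not come for free: you would still have to prove that no invariant line field is supported on the relevant set, which requires essentially the same measure-theoretic work. The paper instead shows directly that the support of $\bar\partial\Lambda_{k_n}$ is contained in $\bigcup_j f^{-j}\bigl(H_{k_n}(Y^{(k_n)}\cup X^{(k_n)})\setminus\mathcal K(f)\bigr)$, which shrinks to the set $E(f)$ of points whose $\omega$-limit set meets $\mathcal K(f)\cup\mathcal S(f)$, and then proves $E(f)$ has Lebesgue measure zero (Lemma~\ref{lem:measure}) using the non-renormalizable box mapping at $c_0$ together with Koebe distortion. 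Your phrase "a standard argument" conceals precisely this step. Finally, note that the limit map a priori conjugates only off $\mathcal K(f)\cup\mathcal S(f)$; the paper needs the short extra argument via $\Phi\circ f^{m_0}=\widetilde f^{m_0}\circ\Phi$ on $\mathcal K(f)$ and injectivity of $\widetilde f^{m_0-1}$ on $\widetilde f(\mathcal K(\widetilde f))$ to upgrade this to a conjugacy everywhere; your construction sidesteps this by building $\sigma$ into every approximant, which is fine, but it should be said that the renormalizations' external markings agree because $\chi(f)=\chi(\widetilde f)$ as \emph{marked} straightenings.
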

We postpone the proof to the end of this section and first prove some useful lemmas.
\begin{Lemma}\label{lem:Fatou}Let $f ,\widetilde f\in \mathcal C(\lambda_{f_0})$. For any $k\ge \kappa_0$, let $H_k$ $(\text{resp.}~\widetilde H_k)$ be a qc weak pseudo-conjugacy between $f_0$ and $f$ $(\text{resp.}~\widetilde f)$ up to depth $k$ and  let $\Lambda_k=\widetilde H_k\circ H^{-1}_k$.  If $\chi(f)=\chi(\widetilde f)$, then there exists $K=K(f)>0$ such that  $\Lambda_k|_{\partial{H_k(Y^{(k)})}}$ has a $K$-qc extension $\widehat \Lambda_k$ from $H_k(Y^{(k)})$ onto $\widetilde H_k(Y^{(k)})$ for all $k\ge\kappa_0+m_0$. 
\end{Lemma}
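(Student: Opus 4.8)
The plan is to turn the combinatorial identity $\chi(f)=\chi(\widetilde f)$ into a genuine quasiconformal conjugacy of the renormalizations near the small filled Julia set, then restrict that conjugacy to the puzzle pieces $H_k(Y^{(k)})$ and absorb the degenerating geometry by a holomorphic (in fact univalent) pullback, so that no dilatation blows up with $k$.

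\smallskip

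First a reduction. For $n\le k$ write $P_n=H_k(Y^{(n)})$ and $\widetilde P_n=\widetilde H_k(Y^{(n)})$; these are the canonical depth-$n$ puzzle pieces of $f$, resp.\ $\widetilde f$, around the superattracting component, and they do not depend on the choice of weak pseudo-conjugacy because any such map is the identity in the B\"ottcher coordinate off the depth-$n$ pieces. Propagating that identity inward along external rays and along equipotentials through the conjugacy relation (exactly as $H_k=\phi_{f}^{-1}\circ\phi_{f_0}$ is spread over preimages of the height-$1$ equipotential), one gets $H_k=\phi_{f}^{-1}\circ\phi_{f_0}$ on all of $\partial Y^{(k)}$, with the obvious continuous extension over the finitely many repelling landing points. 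Hence $\Lambda_k|_{\partial P_k}=\phi_{\widetilde f}^{-1}\circ\phi_f$: the boundary datum we must extend is the \emph{canonical} B\"ottcher identification, of $k$-independent shape, conformal off finitely many repelling points. Recall also that $f^{m_0}:P_n\to P_{n-m_0}$ is a degree-$d_1$ APL map with $\bigcap_{m\ge n}P_m=K_f:=K(U_0,f)$ for $n\ge\kappa_0$, and likewise for $\widetilde f$.

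\smallskip

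Next, from $\chi(f)=\chi(\widetilde f)=:g$ I would manufacture a fixed conjugacy $\sigma$ near $K_f$. By the definition of $\chi$ there are hybrid conjugacies from the polynomial-like restriction of $f^{m_0}:P_{\kappa_0}\to P_{\kappa_0-m_0}$ to $g$, and from the analogous restriction of $\widetilde f^{m_0}$ to $g$, both respecting the external markings; composing them gives a qc map $\sigma$ on a neighborhood of $K_f$ with $\sigma\circ f^{m_0}=\widetilde f^{m_0}\circ\sigma$, $\bar\partial\sigma=0$ a.e.\ on $K_f$, and $\sigma$ carrying the external marking for $f$ to that for $\widetilde f$. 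Since $\sigma$ is a hybrid conjugacy between polynomial-like representatives of a \emph{fixed} depth, its dilatation is bounded in terms of the modulus of one fixed fundamental annulus, hence by $K_0=K_0(f)$ independent of $k$. Because $\sigma$ respects the external marking and conjugates the renormalized dynamics, the standard control of angles under external markings (cf.\ \cite[\S3]{IK}) shows $\sigma$ maps $\mathcal R_f(\theta)$ onto $\mathcal R_{\widetilde f}(\theta)$ for every $\theta$ landing on the grand orbit of $\tau_0$; after modifying $\sigma$ on a single fundamental annulus (in a way compatible with the conjugacy equation and with $\bar\partial=0$ on $K_f$) we may further take $\sigma=\phi_{\widetilde f}^{-1}\circ\phi_f$ on every external ray inside its domain. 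Finally fix $N_1>\kappa_0$, depending only on $f$, so large that $\overline{P_{N_1}}$ lies in the domain of $\sigma$.

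\smallskip

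Now fix $k\ge N_1$ and compare $\sigma$ with $\Lambda_k$ on the Jordan curve $\partial P_k$, which is a union of arcs of external rays, finitely many repelling landing points, and one equipotential arc $e_k$ at Green level $d^{-k}$. On the ray arcs and the landing points, $\sigma$ and $\Lambda_k$ agree; thus $\sigma(P_k)$ and $\widetilde P_k$ share the same boundary except that $\widetilde e_k$ is replaced by $\sigma(e_k)$, an arc with the same endpoints. Since the arcs $e_k,e_{k-m_0},\dots,e_{N_1}$ (and their $\sigma$-images) sit at the distinct positive Green levels $d^{-k},\dots,d^{-N_1}$, where $\widetilde f^{m_0}$ is locally univalent (all critical points of $\widetilde f$ lie in its filled Julia set), and $\sigma$ conjugates $f^{m_0}$ to $\widetilde f^{m_0}$, the ``lens'' bounded by $\sigma(e_k)\cup\widetilde e_k$ is the univalent forward image, under $(\widetilde f^{m_0})^{(k-N_1)/m_0}$, of the fixed lens at level $d^{-N_1}$. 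Choose once and for all a $K_2$-qc homeomorphism $\tau_{N_1}$, supported in a definite collar of that fixed lens, with $\tau_{N_1}\circ\sigma=\phi_{\widetilde f}^{-1}\circ\phi_f$ on $e_{N_1}$; pulling it back by the univalent iterates of $\widetilde f^{m_0}$ yields, for each $k$, a $K_2$-qc homeomorphism $\tau_k:\sigma(P_k)\to\widetilde P_k$ equal to the identity off a collar of $\widetilde e_k$, with $K_2=K_2(f)$ independent of $k$ because univalent pullback preserves dilatation. Set $\widehat\Lambda_k:=\tau_k\circ\sigma\big|_{\overline{P_k}}$. Then $\widehat\Lambda_k:P_k\to\widetilde P_k$ is $K_0K_2$-qc, it equals $\Lambda_k$ on the ray arcs and landing points by construction, and on $e_k$ the equivariance built into $\tau_{N_1}$ gives $\widehat\Lambda_k=\phi_{\widetilde f}^{-1}\circ\phi_f=\Lambda_k$ there as well. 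The remaining indices $\kappa_0+m_0\le k<N_1$ are finitely many, and for each the quasidisk $P_k$ admits a qc extension of the quasisymmetric boundary map $\Lambda_k$; taking $K=K(f)$ to be the maximum of $K_0K_2$ and these finitely many constants finishes the proof.

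\smallskip

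The main obstacle is exactly the uniformity of $K$ in $k$: the pieces $P_k$ pinch down onto $K_f$ and the fundamental annuli in $P_k\setminus K_f$ have moduli decaying like $d_1^{-j}$, so a direct interpolation between $\sigma$ and $\Lambda_k$ across $\partial P_k$ would give dilatation of order $d^{k}$. This is defeated by two points: (i) that $\Lambda_k|_{\partial P_k}$ is the canonical B\"ottcher identification, so the discrepancy with $\sigma$ can be confined to the single equipotential arc $e_k$ — which requires the delicate choice of $\sigma$ agreeing with $\phi_{\widetilde f}^{-1}\circ\phi_f$ along external rays near $K_f$ while remaining a hybrid conjugacy; and (ii) that this confined discrepancy lives at positive Green level and is $\widetilde f^{m_0}$-equivariant, so the depth-$k$ correction is an honest univalent holomorphic pullback of a correction fixed once and for all at depth $N_1$. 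Verifying (i) carefully is where the real work lies.
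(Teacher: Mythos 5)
Your reduction of $\Lambda_k|_{\partial H_k(Y^{(k)})}$ to the canonical B\"ottcher identification $\phi_{\widetilde f}^{-1}\circ\phi_f$ is correct, and the overall idea of fixing the geometry once near $K(F)$ and transporting it by univalent holomorphic pullback is sound in spirit. The gap is in your step (i), on which everything else rests. A hybrid conjugacy between the $\lambda_{f_0}$-renormalizations that respects the external markings controls exactly one access to $K(F)$ (and, by equivariance, its forward orbit under $f^{m_0}$); it gives no information about the global external rays of $f$ landing at the points of $f^{-k}(Z_0)$ that form the vertices of $\partial Y^{(k)}$ --- those points lie off $K(F)$ entirely, so they are not seen by the marking at all. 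Hence the assertion that $\sigma$ carries $\mathcal R_f(\theta)$ onto $\mathcal R_{\widetilde f}(\theta)$ for every $\theta$ in the grand orbit of $\tau_0$ does not follow from the theory of external markings. Your fallback --- modify $\sigma$ on a single fundamental annulus so that it agrees with $\phi_{\widetilde f}^{-1}\circ\phi_f$ on every external ray in its domain --- is not a routine interpolation: inside one fundamental annulus $P_{N_1}\setminus\overline{P_{N_1+m_0}}$ the ray arcs occurring in $\partial P_k$ for the various $k>N_1$ form an infinite family accumulating on $J_f$, and producing a qc map of uniformly bounded dilatation with prescribed values on all of them simultaneously is essentially the statement being proved. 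As written, the lemma has been reduced to an unproved claim of comparable difficulty; you acknowledge this ("where the real work lies") but do not supply the work.

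For contrast, the paper's proof avoids matching rays altogether. It first builds, for each of the $m_0$ base depths $\kappa_0+j$, a single extension $\widehat\Lambda_{\kappa_0+j}$ by interpolating between the hybrid conjugacy $\psi$ on a polynomial-like restriction $U\Subset H_{\kappa_0+m_0}(Y^{(\kappa_0+m_0)})$ and the boundary values $\Lambda_{\kappa_0+j}$. For general $k$ it then defines $\widehat\Lambda_k$ on $H_k(Y^{(k)})\setminus K(F)$ as the lift of $\widehat\Lambda_{\kappa_0+j}$ through the two holomorphic coverings $f^{k-\kappa_0-j}$ and $\widetilde f^{k-\kappa_0-j}$ of equal degree, and sets $\widehat\Lambda_k=\psi$ on $K(F)$, gluing by the qc removability lemmas of Lyubich and Douady--Hubbard. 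Boundary compatibility is automatic because $\Lambda_k|_{\partial H_k(Y^{(k)})}$ is itself the lift of $\Lambda_{\kappa_0+j}|_{\partial H_k(Y^{(\kappa_0+j)})}$ (both being B\"ottcher identifications), and uniformity of $K$ in $k$ is free because lifting through holomorphic coverings preserves dilatation. If you want to salvage your scheme, replace the ray-matching claim by this covering-lift step; the equipotential-lens correction then becomes unnecessary.
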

\begin{proof} It is easy to see there exists $K>0$ such that  $\Lambda_{\kappa_0+j}|_{\partial{H_{\kappa_0+j}(Y^{(\kappa_0+j)})}}$  has a desired $K$-qc extension $\widehat \Lambda_{\kappa_0+j}$ for $j=1,\dots,m_0$. Indeed, one can choose  polynomial-like restrictions $F=f^{m_0}:U\to V$ and $\widetilde F=\widetilde f^{m_0}:\widetilde U\to \widetilde V$ such that $U\Subset H_{(\kappa_0+m_0)}(Y^{(\kappa_0+m_0)})$ and $\widetilde U\Subset \widetilde H_{\kappa_0+m_0}(Y^{(\kappa_0+m_0)})$. Let $\psi:U\to \widetilde U$ be a hybrid conjugacy between $f$ and $\widetilde f$ respecting the canonical external marking. Define $\widehat \Lambda_{\kappa_0+j}=\psi$ on $U$ and $\widehat \Lambda_{\kappa_0+j}=\Lambda_{\kappa_0+j}$ on $\partial \Lambda_{\kappa_0+j}(Y^{(\kappa_0+j)})$. Then interpolate quasiconformally. Let $K$ be the maxium of the maximal dilatation of $\widehat H_{\kappa_0+j}$.

Now the Lemma follows easily by induction. Note that $f^{k-\kappa_0-j}:H_{k}(Y^{(k)}) \setminus K(F)\to H_{k}(Y^{\kappa_0+j})\setminus K(F)$ and  $\widetilde f^{k-\kappa_0-j}:\widetilde H_k(Y^{(k)})\setminus K(\widetilde F) \to \widetilde H_k(Y^{\kappa_0+j})\setminus K(\widetilde F)$ are coverings of the same degree for all such $k$ that $m_0\mid k-\kappa_0-j$. We now define  $\widehat \Lambda_k|_{H_{k}(Y^{(k)}) \setminus K(F)}$ as the lift of $\widehat \Lambda_{\kappa_0+j}|_{H_{k}(Y^{(\kappa_0+j})\setminus K(F)}$ (this part is $K$-qc) and $\widehat \Lambda_k|_{K(F)}=\psi$ (this part is conformal). It follows ~\cite[Lemma~10.4]{L2} and ~\cite[Lemma~2, page 303]{DH2} that $\widehat\Lambda_{k}$ is a $K$-qc map.
\end{proof}

\begin{Remark}\label{re:filled}In the proof of the above lemma, we know that the $K$-qc extension $\widehat \Lambda_k$ of $\Lambda_k$ can be made conformal a.e. on $K(F)$. Moreover, $\widehat \Lambda_k\circ f^{m_0} =\widetilde f^{m_0}\circ \widehat \Lambda_k$.
\end{Remark}

\begin{Lemma}\label{lem:pullback}Let $\Omega, \Omega_*,\widetilde \Omega, \widetilde \Omega_*$ be quasidisks.  
Let $H_*:\Omega_*\to \widetilde \Omega_*$ be a qc map and let $H:\partial \Omega\to \partial\widetilde \Omega$ be a homeomorphism. Assume $A$ and $\widetilde A$ are compact subset of $\Omega_*$ and $\widetilde \Omega_*$ respectively. If there exist  holomorphic proper maps $g:\Omega \to \Omega_*$ and $\widetilde g:\widetilde \Omega\to \widetilde \Omega_*$ of the same degree with the following properties:
\begin{itemize}
\item the critical values of $g$ and $\widetilde g$ lie in $A$ and $\widetilde A$ respectively;
\item $H\circ g=\widetilde g\circ\widetilde H$ on $\partial \Omega$.
\end{itemize}
Then $H$ has a $K$-qc extension from $\Omega$ onto $\widetilde \Omega$ where $K$ only depends on $H$, $A$ and $A'$. 
\end{Lemma}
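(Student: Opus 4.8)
The plan is a two-region pullback: cut $\Omega$ into a ``core'' $\Omega_0\Subset\Omega$ containing every critical point of $g$ and an outer annular region on which $g$ is an unbranched covering, lift $H_*$ over the outer region (using the boundary relation to select the correct lift), and then extend the resulting boundary values across the core by hand. First I would fix a quasidisk $D$ with $A\cup H_*^{-1}(\widetilde A)\Subset D\Subset\Omega_*$ and set $\widetilde D:=H_*(D)$, which is a quasidisk with $\widetilde A\Subset\widetilde D\Subset\widetilde\Omega_*$ and $H_*(\overline D)=\overline{\widetilde D}$ (here $\widetilde A$ is the ``$A'$'' of the statement). Since $D$ contains all critical values of $g$ and $\partial\Omega$ is a single Jordan curve, a standard covering argument shows that $\Omega_0:=g^{-1}(D)$ is a quasidisk with $\overline{\Omega_0}\Subset\Omega$, that $\gamma:=\partial\Omega_0=g^{-1}(\partial D)$ is one quasicircle, and that $g\colon\Omega\setminus\overline{\Omega_0}\to\Omega_*\setminus\overline D$ is a degree-$d$ unbranched holomorphic covering of annuli; likewise for $\widetilde\Omega_0:=\widetilde g^{-1}(\widetilde D)$ and $\widetilde\gamma:=\widetilde g^{-1}(\partial\widetilde D)$, where $d$ is the common degree.

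Next, on the outer region, $H_*|_{\Omega_*\setminus\overline D}$ is a $K_0$-qc homeomorphism onto $\widetilde\Omega_*\setminus\overline{\widetilde D}$, where $K_0:=\mathrm{Dil}(H_*)$. Connected degree-$d$ coverings of an annulus being mutually isomorphic, $H_*|_{\Omega_*\setminus\overline D}$ lifts through $g$ and $\widetilde g$ to a homeomorphism $G\colon\Omega\setminus\overline{\Omega_0}\to\widetilde\Omega\setminus\overline{\widetilde\Omega_0}$ with $\widetilde g\circ G=H_*\circ g$; such lifts form a torsor under the deck group $\mathbb Z/d$ of $g|_{\Omega\setminus\overline{\Omega_0}}$, and the compatibility relation $H_*\circ g=\widetilde g\circ H$ on $\partial\Omega$ exhibits $H|_{\partial\Omega}$ as a lift over the boundary circles, hence pins down the unique lift $G$ whose continuous extension to $\partial\Omega$ equals $H$. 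As a lift of a $K_0$-qc map through holomorphic maps, $G$ is $K_0$-qc, and it extends continuously to $\gamma$ as a homeomorphism $H_1\colon\gamma\to\widetilde\gamma$.

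It then remains to extend $H_1$ to a quasiconformal homeomorphism $\overline{\Omega_0}\to\overline{\widetilde\Omega_0}$ and glue. Uniformizing the quasidisks $\Omega_0,\widetilde\Omega_0$ to the unit disk, $H_1$ becomes a circle homeomorphism which is the boundary trace of the $K_0$-qc map $G$, hence quasisymmetric with constant controlled by $K_0$ and the quasiconformal-reflection constants of $\gamma,\widetilde\gamma$; a Beurling--Ahlfors extension then yields a qc homeomorphism $\overline{\Omega_0}\to\overline{\widetilde\Omega_0}$ agreeing with $H_1$ on $\gamma$. Gluing it with $G$ along the quasicircle $\gamma$ produces a $K$-qc homeomorphism $\Omega\to\widetilde\Omega$ extending $H$. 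To obtain the sharper assertion that $K$ depends only on $\mathrm{Dil}(H_*)$, $A$ and $\widetilde A$, one repeats the annular lifting step on a nested exhaustion of $\Omega_*\setminus A$ (resp.\ $\widetilde\Omega_*\setminus\widetilde A$) by annuli: each such annulus contributes no dilatation, so the only non-canonical step is an extension across an arbitrarily thin neighbourhood of $A$ and of $\widetilde A$, and the cost of that step can be taken to depend only on $A$, $\widetilde A$ and $K_0$.

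The step I expect to be the main obstacle is precisely the dilatation control in the last paragraph: one must check that the boundary homeomorphism delivered by the outer lift is quasisymmetric with a bound free of $g$ and $\widetilde g$, and this is where the hypothesis that the critical values lie in the \emph{fixed} compact sets $A$, $\widetilde A$ is genuinely used. By contrast, the covering-space bookkeeping needed to make the outer lift restrict to $H$ on $\partial\Omega$ and to make the two pieces match along $\gamma$ is routine; it just has to be performed in the order above (outer lift first, then the core extension built on the boundary values it produces) so that the glued map is well defined.
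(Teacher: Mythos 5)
Your construction is sound and rests on exactly the mechanism the paper exploits, but you reach it by a different route: the paper's own proof is a two-line reduction. It uniformizes $\Omega,\Omega_*,\widetilde\Omega,\widetilde\Omega_*$ to the unit disk, notes that $\phi_*(A)$ and $\widetilde\phi_*(\widetilde A)$ then sit inside some $\mathbb D_{r^2}$ with $r$ depending only on $A$, $\widetilde A$ and the chosen base points, and invokes \cite[Lemma~3.2]{AKLS} wholesale. What you have written out is, in effect, the internal proof of that cited lemma transplanted to the quasidisk setting: the cut along $\gamma=\partial g^{-1}(D)$, the lift of $H_*$ over the outer annulus pinned down by the boundary relation (your deck-group argument for selecting the lift agreeing with $H$ on $\partial\Omega$ is correct, since a finite cover of an annulus is normal), and the interpolation across the core. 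Two of the steps you label ``standard'' are precisely where the content of the AKLS lemmas lives and should be proved or attributed: (i) that $\gamma$ and $\widetilde\gamma$ are quasicircles with constants depending only on the degree $d$, on $\partial D$ and on $\mathrm{mod}(\Omega_*\setminus\overline D)$ --- this is the bounded-geometry statement one gets by transferring $g$ to a Blaschke product, in the style of \cite[Lemma~3.1]{AKLS}; and (ii) the quasisymmetry of the trace $H_1$, for which the reflection constants of $\gamma,\widetilde\gamma$ alone do not suffice: you also need the definite collar $\mathrm{mod}(\Omega\setminus\overline{\Omega_0})\ge\tfrac1d\,\mathrm{mod}(\Omega_*\setminus\overline D)$ on which $G$ is defined. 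Accordingly, the constant $K$ you obtain depends on $\mathrm{Dil}(H_*)$, on $d$, and on the position of $A$ in $\Omega_*$ and of $\widetilde A$ in $\widetilde\Omega_*$ --- the same data the paper packages into $r=r(A,\widetilde A,x,H_*(x))$; both the lemma's statement and your final paragraph are slightly too optimistic in suppressing the dependence on $d$ (harmless here, as the degree is bounded by $d_1+d_2$ in every application). Your closing ``nested exhaustion'' refinement is unnecessary and does not quite parse: the non-canonical extension is performed across $g^{-1}$ of a neighbourhood of $A$, not across a neighbourhood of $A$ itself, so its cost cannot be divorced from $g$; the single-cut version of your argument already yields everything the lemma claims.
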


\begin{proof}
Fix some $x \in \Omega_*$. Let $\phi_*:\Omega_*\to \mathbb D$ and $\widetilde \phi_*:\widetilde \Omega_*\to \mathbb D$ be conformal maps such that $\phi_*(x)=0$ and $\widetilde \phi_*(H_*(x))=0$. Choose $y\in g^{-1}(x)$ and $\widetilde y\in \widetilde g^{-1}(H_*(x))$. Let $\phi:\Omega\to \mathbb D$ and $\widetilde \phi:\widetilde \Omega\to \mathbb D$ be conformal maps such that $\phi(y)=0$ and $\widetilde \phi (\widetilde y)=0$. Let $\widehat H_*={\widetilde \phi_*}\circ H \circ{\phi_*}^{-1}:\mathbb D\to \mathbb D$ and $\widehat H={\widetilde \phi}\circ H \circ{\phi}^{-1}:\partial \mathbb D\to \partial \mathbb D$.  Since $A$ and $\widetilde A$ lie compactly in $\Omega_*$ and $\widetilde \Omega_*$ respectively, there exists $0<r=r(A,A',x,H_*(x))<1$ such that $\phi_*(A), \widetilde \phi_*(\widetilde A) \subset \mathbb D_{r^2}$. By \cite[Lemma~3.2]{AKLS}, $\widehat H$ has a $K$-qc extension $\widehat H:\mathbb D\to \mathbb D$. Thus $H$ has a $K$-qc extension from $\Omega$ onto $\widetilde \Omega$.
\end{proof}

\begin{Lemma}\label{lem:Julia}Let $f,\widetilde f$ and  $\{\Lambda_{k}\}$ be as in Lemma~\ref{lem:Fatou}. 
Then there exists $K>0$ and a sequence $\{k_n\}$ such that   $\Lambda_{k_n}|_{X^{(k_n)}}$ has a $K$-qc extension from $H_{k_n}(X^{(k_n)})$ onto $\widetilde H_{k_n}(X^{(k_n)})$.
\end{Lemma}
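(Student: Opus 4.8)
The plan is to run, at the free critical point $c_0$, the principal-nest argument of \cite{AKLS}, which is the exact counterpart at $c_0$ of Lemma~\ref{lem:Fatou}. First I would dispose of the case in which $c_0$ is not combinatorially recurrent: then $\partial X^{(n)}$ is disjoint from $\mathrm{orb}(c_0)$ for $n$ large, a fixed iterate of $f_0$ maps a neighbourhood of $\partial X^{(n)}$ univalently onto a piece of bounded depth, so $\Lambda_n|_{\partial H_n(X^{(n)})}$ pulls back a fixed qc extension univalently, exactly as in the quadratic non-recurrent case (cf. \cite[Lemma~1.8]{Mil4}). So assume $c_0$ is combinatorially recurrent and recall from the proof of Lemma~\ref{lem:non-degenerated} the principal nest $P_0\supset P_1\supset\cdots$ about $c_0$: each $P_j$ is a critical puzzle piece, say $P_j=X^{(e_j)}$ with $e_j\uparrow\infty$, the first return map $R_j\colon P_{j+1}\to P_j$ is a degree-$d_2$ branched cover whose only critical point is $c_0$, and $\bigcap_j P_j=\{c_0\}$ by Lemma~\ref{lem:X shrinking}. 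The same combinatorial data is carried by $f$ and by $\widetilde f$: they have the identical puzzle structure (Lemma~\ref{lem:nondegenerate}), are again non-renormalizable at their Julia critical points (the combinatorics are the same, so the argument of Lemma~\ref{lem:X shrinking} applies, transported along the weak pseudo-conjugacies as in the proof of Lemma~\ref{lem:tau repelling}), and have only repelling cycles along the orbits that matter. Hence the Kahn--Lyubich complex bounds \cite{KL},\cite{KS} furnish a constant $\delta=\delta(f,\widetilde f)>0$ and an infinite set $\mathcal L$ of levels such that, for $j\in\mathcal L$, the first return map to $P_j$ (and to $H_k(P_j)$, $\widetilde H_k(P_j)$) admits a generalized polynomial-like extension whose fundamental annulus has modulus $\ge\delta$ and whose critical value lies $\delta$-deep inside.

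Next I would fix $j\in\mathcal L$ and set $k=k_j:=e_{j+1}$, so that $X^{(k_j)}=P_{j+1}$. Since $\partial P_i\subset\Gamma_{e_i}$ lies outside the interiors of the depth-$k_j$ puzzle pieces, and the return orbit of $\partial P_{j+1}$ stays outside them as well, the map $\Lambda_{k_j}=\widetilde H_{k_j}\circ H_{k_j}^{-1}$ conjugates $f^{\,p_i}$ to $\widetilde f^{\,p_i}$ on $\partial H_{k_j}(P_{i+1})$ for all $i\le j$; that is, it conjugates the boundary dynamics of the successive (generalized) polynomial-like return maps of $f$ and $\widetilde f$. One then builds a qc extension of $\Lambda_{k_j}$ over $P_0\supset P_1\supset\cdots\supset P_{j+1}$ from the outside in, applying Lemma~\ref{lem:pullback} at each step. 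At a central return the annulus $P_i\setminus\overline{P_{i+1}}$ is covered without branching, so pulling back does not increase the maximal dilatation there; at each level of $\mathcal L$ the $\delta$-definite modulus of the fundamental annulus together with the $\delta$-deep position of the critical value allow one to reset, producing on $H_{k_j}(P_j)$ a qc map whose dilatation is bounded in terms of $\delta$ alone — this is precisely where \cite[Lemma~3.2]{AKLS}, packaged in Lemma~\ref{lem:pullback}, is used. Feeding this outer extension into Lemma~\ref{lem:pullback} one last time at the step $P_j\to P_{j+1}$ yields a $K$-qc extension of $\Lambda_{k_j}|_{\partial H_{k_j}(X^{(k_j)})}$ from $H_{k_j}(X^{(k_j)})$ onto $\widetilde H_{k_j}(X^{(k_j)})$ with $K=K(\delta)=K(f,\widetilde f)$. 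Enumerating $\mathcal L$ as $j_1<j_2<\cdots$ and setting $k_n=k_{j_n}$ then completes the proof.

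The main obstacle is the uniform control of the dilatation along the infinite principal nest: one must keep it from compounding through arbitrarily long central cascades, where the moduli of the nest annuli may degenerate. This is the heart of the \cite{AKLS} method, governed by the Kahn--Lyubich covering lemma together with the complex bounds; the only new feature in our setting is that the free critical point has multiplicity $d_2$ rather than $1$, so the return maps have degree $d_2$ instead of $2$, which does not affect the structure of the argument. A secondary point worth verifying is that the complex bounds are genuinely available for $\widetilde f$ (and, in the surjectivity part, for the limit map produced in the proof of Theorem~B) with a definite $\delta$: this holds because membership in $\mathcal C(\lambda_{f_0})$ forces the same combinatorics, non-renormalizability at the Julia critical point, and repelling cycles along the orbits entering the complex-bounds construction, so that machinery runs verbatim.
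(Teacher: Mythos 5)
Your proposal inverts where the real work lies, and the part you dismiss is where the gap is. For the combinatorially recurrent case the paper simply invokes \cite[Theorem~4.4]{AKLS}; your principal-nest/complex-bounds outline is a reasonable sketch of what that citation contains, so no objection there (modulo the remark that the local degree at a critical point of multiplicity $d_2$ is $d_2+1$, not $d_2$). The problem is the non-recurrent case, which you dispose of in one sentence by claiming that ``a fixed iterate of $f_0$ maps a neighbourhood of $\partial X^{(n)}$ univalently onto a piece of bounded depth, so $\Lambda_n$ pulls back a fixed qc extension univalently.'' This is false as stated: the extension must be built over the \emph{whole} piece $X^{(k_n)}$, which contains the critical point $c_0$, so the relevant map $f^{k_n-\ell_0}\colon X^{(k_n)}\to Y^{(\ell_0)}(c_{k_n-\ell_0})$ has degree at least $d_2+1$ and is never univalent. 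To apply Lemma~\ref{lem:pullback} (i.e.\ \cite[Lemma~3.2]{AKLS}) one must therefore verify that the critical value $c_{k_n-\ell_0}$ lies in a compact subset of the target piece that is \emph{definite}, i.e.\ independent of $n$, and the same for the corresponding objects for $f$ and $\widetilde f$.

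That containment is exactly what can fail naively and what the paper's proof is really about: even when $c_0$ is combinatorially non-recurrent, its orbit may accumulate on $\partial U_0$ or on $\tau_0$, so the critical values $c_j$ can come arbitrarily close to the boundaries of the fixed-depth pieces. The paper handles this by splitting into three subcases ($\partial U_0\cap\omega(c_0)\neq\emptyset$; $\tau_0\in\omega(c_0)$; neither) and, in the first two, choosing the return times $k_n$ as \emph{first entry} times into $Y^{(n)}$, resp.\ into an annulus $Y^{n_1}_*\setminus Y^{n_2}_*$ around $\tau_0$, precisely so that the critical value is trapped in a definite compact set and the degree of the pullback stays bounded (using $F_0^{-1}(Y^{(n)})\subset Y^{(n)}$, resp.\ $\tau_0\notin\bigcup_{j=1}^{m_0}f_0^j(\overline{Y^{(\kappa_0)}})$). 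None of this is recoverable from your argument, so as written the non-recurrent case is a genuine gap rather than a routine reduction.
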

\begin{proof}
Recall that there exists $\kappa_0>0$ with the following properties.
 \begin{itemize}
 \item $\overline {Y^{(\kappa_0)}},\overline{f_0(Y^{(\kappa_0)})},\ldots, \overline{f^{m_0-1}_0(Y^{(\kappa_0)})}$ are disjoint;
 \item $f^{m_0}_0:Y^{(\kappa_0)} \to Y^{(\kappa_0-m_0)}$ is an APL map with filled Julia set $\overline{U_0}$.
  \end{itemize}
  We can further assume $\kappa_0$ is large enough so that $\tau_0 \notin  \bigcup_{j=1}^{m_0} f^j_0(\overline {Y^{(\kappa_0)}})$.
  
If  $c_0$ is combinatorially recurrent, i.e., for any $n\in \mathbb N$ there exists $\ell>0$ such that $f^{\ell}_0(c_0) \in X^{(n)}$, then the lemma follows from~\cite[Theorem~4.4]{AKLS}.

Now we assume $c_0$ is combinatorially non-recurrent. Then there exists $\ell_0>\kappa_0$ such that $f^j_0(c_0)\notin X^{(\ell_0)}$. 
For each $j\in \mathbb N$, let us denote $f^{j}_0(c_0)$ by $c_j$.

\medskip

{\bf Case 1.} $\partial U_0 \cap \omega(c_0) \ne \emptyset$. Then for any $n>\ell_0$, there exists a smallest non-negative integer $k'_n$ such that $c_{k'_n} \in Y^{(n)}$. Let $k_n=k'_n+\ell_0$. Consider the map $f^{k'_n}_0:Y^{(k_n)}(c_0) \to Y^{(\ell_0)}(c_{k'_n})=Y^{(\ell_0)}$. Note that this map is $d_1$-to-$1$. For otherwise, there exists a smallest positive integer $0<s<k'_n$ such that $f^{s}_0(Y^{(k_n)}(c_0))=Y^{(k_n-s)}(c_s)$ contains $0$. Thus $c_s\in Y^{(k_n-s)}$. By the definition of $k'_n$, $c_s\in Y^{(k_n-s)}\setminus Y^{(n)}$. Note that $F_0:=f^{m_0}_0:Y^{(\ell_0+m_0)} \to Y^{(\ell_0)}$ is a first return map. Since $f^{k'_n-s}_0(Y^{(k_n-s)})=Y^{(\ell_0)}$, there exists $u$ such that $F^u_0(Y^{(k_n-s)})=Y^{(\ell_0)}$. So $c_{k'_n}=f^{k'_n-s}_0(c_s)=F^{u}_0(c_s) \notin Y^{(n)}$ since  $F_0^{-1}(Y^{(n)}) \subset Y^{(n)}$  and $c_s\notin Y^{(n)}$.

Now take $n_0$ large so that $Y^{(n)} \Subset Y^{(\ell_0)}$ for all $n\ge n_0$. Since $f,\widetilde f\in \mathcal C(\lambda_{f_0})$, take a sufficiently large positive integer $k$, we know that 
\begin{itemize}
\item $H_{k}(Y^{(n)})\subset H_{n_0}(Y^{(n_0)}) \Subset H_{n_0}(Y^{(\ell_0)})$;
\item  $f^{k_n-\ell_0}:H_{k_n}(Y^{(k_n)}(c_0)) \to H_{k_n}(Y^{(\ell_0)}) $ is $d_1$-to-$1$ and its critical value lies in $H_k(Y^{(n)}) \subset  H_{n_0}(Y^{(n_0)}) \Subset H_{n_0}(Y^{(\ell_0)})$.
\end{itemize}
This statement also holds for the corresponding objects with tildes. Since $\widetilde f^{k'_n}\Lambda_{\ell_0}=\Lambda_{k'_n}\circ f^{k'_n}$ on $\partial H_{k_n}(Y^{(k_n)}(c_0))$, by Lemma~\ref{lem:pullback}, $H_{k_n}$ has a $K$-qc extension where $K$ only depends on $\Lambda_{\ell_0},H_{n_0}(Y^{(n_0)})$ and $\widetilde H_{n_0}(Y^{(n_0)})$.

\medskip

{\bf Case 2.} $\tau_0\in \omega(c_0)$. Let $q$ be the ray period of $\tau_0$. For each $n\in \mathbb N$, let $Y^{n}_*$ be the interior of  the closure of the union of all the puzzle pieces of depth $n$ which attaches $\tau_0$. Since $\tau_0$ is a repelling fixed point of $f^q_0$, there exist small disks $\tau_0 \in D\Subset D'\Subset Y^{\ell_0}_*$ such that $f^q_0:D\to D'$ is a conformal map. Let $g$ denote the inverse map of $f^q_0:D\to D'$. Since $Y^n_*$ shrinks to $\tau_0$, there exists $n_1>\ell_0$ large such that $Y^{n_1}_* \subset D'$. Then there exists $s_1$ and $n_2$ such that $Y^{n_2}_*=g^{s_1}(Y^{n_1}_*) \Subset Y^{n_1}_*$ as $g$ is contracting. Since $\tau_0\in \omega(c_0)$, for any $n$, there exists $t>n+n_2$ such that $c_{t} \in Y^{n_2}_*$. Then there exists $t'>t$ so that $c_{t'} \in Y^{n_1}_* \setminus Y^{n_2}_*$. Note that $\tau_0$ lies on the boundary of $Y^{(\ell_0)}(c_{t'})$ since $n_1>\ell_0$. Let $k_n=t'+\ell_0$. We consider the map $f^{t'}_0:Y^{(k_n)}(c_0) \to Y^{(\ell_0)}(c_{t'})$. We claim this map is $(d_2+1)$-to-$1$. For otherwise, there exists $0<s'<t'$ such that $c_{s'} \in Y^{(t'+\ell_0-s')} \subset Y^{(\kappa_0)}$. Since $\tau_0 \notin  \bigcup_{j=1}^{m_0} f^j_0(\overline {Y^{(\kappa_0)}})$, we conclude that  $\tau_0$ does not lie on the boundary of $Y^{(\ell_0)}(c_{t'})=f^{t'-s'}_0(Y^{(t'+\ell_0-s')})$. This is a contradiction.
Again take a sufficiently large positive integer $k$, we know that 
\begin{itemize}
\item $H_{k}(Y^{n_1}_* \setminus Y^{n_2}_*) =H_{n_2}(Y^{n_1}_* \setminus Y^{n_2}_*) \Subset H_{n_2}(Y^{(\ell_0)}_*)$;
\item  $f^{k_n-\ell_0}:H_{k_n}(Y^{(k_n)}(c_0)) \to H_{k_n}(Y^{(\ell_0)}(c_{k_n-\ell_0})) $ is $(d_2+1)$-to-$1$ and its critical value lies in $H_{n_2}(Y^{n_1}_* \setminus Y^{n_2}_*) $.
\end{itemize}
We claim that the critical value $v_n$ of $f^{k_n-\ell_0}|_{H_{k_n}(Y^{(k_n)}(c_0))}$ lies in a definite compact subset of $H_{\ell_0}(Y^{(\ell_0)}(c_{k_n-\ell_0}))$. For otherwise, $
\{v_n\}$ accumulates on  the boundary of $H_{\ell_0}(Y^{(\ell_0)}(c_{k_{n}-\ell_0}))$. Since $v_{n}\in H_{n_2}(Y^{n_1}_* \setminus Y^{n_2}_*)$, the only possibility is that $\{v_n\}$  accumulates on the external rays truncated by height ${(d_1+d_2)}^{-n_1}$ and $(d_1+d_2)^{-n_2}$. This implies $v_n$ lies in the Fatou set of $f$ for infinitely many $n$, which is a contradiction. The above  statements also hold for the corresponding objects with tildes. Since $\widetilde f^{k_n-\ell_0}\Lambda_{\ell_0}=\Lambda_{k_n-\ell_0}\circ f^{k_n-\ell_0}$ on $\partial H_{k_n}(Y^{(k_n)}(c_0))$, by Lemma~\ref{lem:pullback}, $H_{k_n}$ has a $K$-qc extension.

\medskip

{\bf Case 3.} $\tau_0\notin \omega(c_0)$ and $\partial U_0 \cap \omega(c_0) = \emptyset$. In this case we can choose $\ell'>\ell_0$ large enough so that $c_j\notin Y^{(\ell')}$ for all $j\in \mathbb N$.  For any $n\in \mathbb N$, let $k_n=n+\ell'$. Let us consider $f^{k_n-\ell'}_0:Y^{(k_n)}(c_0) \to Y^{\ell'}(c_{n})$. Clearly, it is unicritical and $(d_2+1)$-to-$1$. Since $\tau_0\notin \omega(c_0)$, the critical vaule $c_n$ lies in a definite compact subset of $Y^{\ell'}(c_{n})$ (by a similar argument in Case~2).  Since $f \in \mathcal C(\lambda_{f_0})$, we know  $f^{k_n-\ell'}:H_{k_n}(Y^{(k_n)}(c_0)) \to H_{\ell'}(Y^{\ell'}(c_{n}))$ is also  $(d_2+1)$-to-$1$ and its critical value lies in a definite compact subset $ H_{\ell'}(Y^{\ell'}(c_{n}))$ (by a similar argument in Case~2). This statement also holds for the corresponding objects with tildes. Since $\widetilde f^{k_n-\ell'}\Lambda_{\ell'}=\Lambda_{k_n-\ell'}\circ f^{k_n-\ell'}$ on $\partial H_{k_n}(Y^{(k_n)}(c_0))$, by Lemma~\ref{lem:pullback}, $H_{k_n}$ has a $K$-qc extension.
\end{proof}

\begin{Lemma}\label{lem:pseudo}Let $f, \widetilde f\in \mathcal C(\lambda_{f_0})$.  Assume $H$ $(\text{resp.}~\widetilde H)$ is weak pseudo-conjugacy up to depth $n$ between $f_0$ and $f$ $(\text{resp.}~\widetilde f)$ for some $n>0$. Let $\Lambda=\widetilde H\circ H^{-1}$. If $\Lambda|_{\partial H(Y^{(n)}\cup \partial X^{(n)})}$ admits a $K$-qc extension to $\widetilde H(Y^{(n)} \cup X^{(n)})$, then there exists a $K$-qc map $\widehat\Lambda$ such that $\widehat \Lambda$ is identity in the  B\"ottcher coordinate for $f$ and $\widetilde f$ near $\infty$ and $\widehat \Lambda\circ f=\widetilde f\circ \widehat \Lambda$ on $\C\setminus H(Y^{(n)}\cup \partial X^{(n)})$.

\end{Lemma}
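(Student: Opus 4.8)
The plan is to start from $\Lambda:=\widetilde H\circ H^{-1}$ and to correct it on the depth-$n$ puzzle pieces while keeping the dilatation under control. Since $H$ and $\widetilde H$ are identity in the B\"ottcher coordinates near $\infty$ and conjugate $f_0$ to $f$, resp.\ $\widetilde f$, outside the puzzle pieces of depth $n$, the map $\Lambda$ is a qc homeomorphism of $\C$ which is identity in the B\"ottcher coordinates for $f$ and $\widetilde f$ near $\infty$ and satisfies $\Lambda\circ f=\widetilde f\circ\Lambda$ on the complement of the union of \emph{all} depth-$n$ puzzle pieces of $f$ (here one uses that $f$, $\widetilde f$ and $f_0$ carry the same puzzle structure, by Lemma~\ref{lem:nondegenerate}). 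On the unbounded component $\Omega=\{G_f>d^{-n}\}$ of that complement, $\Lambda$ conjugates $f$ to $\widetilde f$ and agrees with $\phi_{\widetilde f}^{-1}\circ\phi_f$ near $\infty$; since $\Omega$ is swept out by pulling back under the dynamics, $\Lambda$ equals $\phi_{\widetilde f}^{-1}\circ\phi_f$ on all of $\Omega$, hence is conformal there. So $\Lambda$ already has every property required of $\widehat\Lambda$ except that it need not conjugate on the depth-$n$ puzzle pieces and is not known to be $K$-qc.

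Next I would set $\widehat\Lambda_0$ equal to the given $K$-qc extensions on the two critical pieces $H(Y^{(n)})$, $H(X^{(n)})$ and equal to $\Lambda$ elsewhere (a homeomorphism, since the extensions agree with $\Lambda$ on $\partial H(Y^{(n)})$ and $\partial H(X^{(n)})$), and then correct on the non-critical depth-$n$ pieces by iterated lifting. Given $\widehat\Lambda_j$, keep it off the non-critical depth-$n$ pieces and, on each such piece $P$, replace it by the lift of $\widehat\Lambda_j|_{f(P)}$ through the univalent maps $f|_P\colon P\to f(P)$ and $\widetilde f|_{\widetilde P}\colon\widetilde P\to\widetilde f(\widetilde P)$, where $\widetilde P=\Lambda(P)$; this lift is well defined because $\widehat\Lambda_j$ still carries the depth-$(n-1)$ piece $f(P)$ onto $\widetilde f(\widetilde P)$, and one normalises it to agree with $\widehat\Lambda_j|_{\partial P}=\Lambda|_{\partial P}$ on $\partial P$, so the pieces glue into a global homeomorphism $\widehat\Lambda_{j+1}$. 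Since $f|_P$ and $\widetilde f|_{\widetilde P}$ are conformal, the pointwise dilatation of $\widehat\Lambda_{j+1}$ at $z\in P$ equals that of $\widehat\Lambda_j$ at $f(z)$; iterating, the dilatation at $z$ coincides with that of $\widehat\Lambda_0$ at the first iterate $f^i(z)$ landing in $\Omega$ (value $1$) or in a critical piece (value $\le K$), the only exceptions being the points of the forward-invariant set $E_n=\bigcap_{i\ge0}f^{-i}\bigl(\C\setminus(H(Y^{(n)})\cup H(X^{(n)}))\bigr)$, where a priori only the bound inherited from $\Lambda$ is available.

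The maps $\widehat\Lambda_j$ are uniformly quasiconformal and stabilise pointwise off $E_n$ (on $P$, $\widehat\Lambda_j|_P$ stops changing once $j$ exceeds the first-exit time of $z$), and since $J_f$ has empty interior the set $\C\setminus E_n$ is dense, so by normality the $\widehat\Lambda_j$ converge uniformly to a qc homeomorphism $\widehat\Lambda$ which, by construction, is identity in the B\"ottcher coordinates near $\infty$ and satisfies $\widehat\Lambda\circ f=\widetilde f\circ\widehat\Lambda$ on $\C\setminus(H(Y^{(n)})\cup H(X^{(n)}))$. It remains to check that $\widehat\Lambda$ is $K$-qc, i.e.\ that $E_n$ is null. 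Orbits in bounded Fatou components of $f$ are attracted to its superattracting cycle, which lies in $H(Y^{(n)})$, and orbits in the basin of infinity eventually enter $\Omega$, so $E_n\subset J_f$; moreover $E_n$ is disjoint from $\bigcap_m H(Y^{(m)})\subset H(Y^{(n)})$ by definition. Since $f$ has the same puzzle structure as $f_0$ and is likewise non-renormalizable at its Julia critical point, the arguments of \S\ref{sec:puzzle} show that the puzzle pieces of $f$ shrink to points at every point of $J_f\setminus\bigcap_m H(Y^{(m)})$, and then a Lebesgue-density argument as in~\cite{AKLS} and~\cite{KS} gives that $E_n$ has zero area. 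This last step — showing that the correction does not inflate the maximal dilatation past $K$, which amounts precisely to controlling the residual set $E_n$ — is the main obstacle; everything else is the routine pull-back bookkeeping of~\cite[Lemma~4.3]{AKLS} and~\cite[\S5]{KSS}.
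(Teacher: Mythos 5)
Your proposal is correct and follows essentially the same route as the paper: pull the given $K$-qc extension back through the univalent branches landing on $H(Y^{(n)}\cup X^{(n)})$, keep the B\"ottcher identity on the escaping set, and dispose of the residual set of points whose orbits never land there by a measure-zero argument (the paper does this in one step via the first landing map and cites \cite[Fact~5.1]{KSS} for the null residual set, where you run the equivalent iterated-lifting limit and sketch the density argument by hand).
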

\begin{proof}
Without loss of generality, we assume $\Lambda$ is $K$-qc on $H(Y^{(n)}\cup X^{(n)})$.
Let $L_n$ denote the domain of the first landing map to $H(Y^{(n)}\cup X^{(n)})$:
\[L_n=\{z\mid \exists k\ge 0~\text{such that}~f^{k}(z) \in H(Y^{(n)}\cup X^{(n)})\}.\] 
For each component $W$ of $L_n$, we define $\phi_W$ be identity  in the B\"ottcher coordinate for $f$ and $\widetilde f$ on $\partial W$ and interpolate homemorphicly. Note that $\phi_W(W)$ is a first landing domain of $H(Y^{(n)}\cup X^{(n)})$ since $\lambda_{f_0}\subset \lambda_{f}\cap \lambda_{\widetilde f} $. 
Thus we can define $\widehat \Lambda|_W:W\to \phi_W(W)$ as the univalent pullback of $H$. Then define $\widehat \Lambda$ on $\C\setminus (K(f)\cup L_n)$ as identity   in the B\"ottcher coordinate for $f$ and $\widetilde f$.  These two maps match on the common boundary since $\Lambda|_{\partial H(Y^{(n)} \cup X^{(n)})}$ is identity in the B\"ottcher coordinate for $f$ and $\widetilde f$. Note that $\widehat \Lambda$ is $K$-qc. By \cite[Fact~5.1]{KSS}, the residual set $$E(H(Y^{(n)}\cup X^{(n)}):=\{z\in K(f)\mid f^j(z) \notin H(Y^{(n)} \cup X^{(n)})~\text {for all}~j\in\mathbb N \}$$
is a nowhere dense compact set with Lebesgue measure $0$. Thus $\widehat \Lambda$ admits a $K$-qc extension to the whole complex plane $\C$.
\end{proof}

Let $f\in \mathcal C(\lambda_{f_0})$ For any $n\in\mathbb N$, let $H_n$ be a weak pseudo-conjugacy  between $f_0$ and $f$ up to depth $n$. Define $\mathcal K(f):=\bigcap_{n\in \mathbb N} H_n(Y^{(n)})$ and  $\mathcal S(f):=\bigcap_{n\in \mathbb N} H_n(X^{(n)})$. By using the same argument as in the proof of Lemma~\ref{lem:X shrinking}, one can show $\mathcal S(f)$ is a singleton which is a Julia critical point $c'_f$ of $f$.

\begin{Lemma}\label{lem:measure} The set $E(f):=\{z\in \C\setminus \mathcal K(f)\mid \omega(z) \cap (\mathcal K(f)\cup \mathcal S(f)) \ne \emptyset\}$ has zero Lebesgue measure for all $f\in \mathcal C(\lambda_{f_0})$.
\end{Lemma}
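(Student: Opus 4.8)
The proof naturally splits $E(f)$ according to whether $\omega(z)$ meets $\mathcal{K}(f)$ or contains $c'_f$ (recall $\mathcal{S}(f)=\{c'_f\}$), each part being controlled through the first-landing and first-return maps of the Yoccoz puzzle of $f$, which by Lemma~\ref{lem:nondegenerate} has the same combinatorics as that of $f_0$. First, $E(f)\subset K(f)$, since every point of the basin of infinity has $\omega$-limit $\{\infty\}$. Write $P_n:=H_n(Y^{(n)})$ and $Q_n:=H_n(X^{(n)})$ for the depth-$n$ critical puzzle pieces of $f$; these are open neighbourhoods of $\mathcal{K}(f)$ and of $c'_f$ with $\bigcap_nP_n=\mathcal{K}(f)$ and $\bigcap_nQ_n=\{c'_f\}$. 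As in \S\ref{sec:qc surgery}, for $n\ge\kappa_0$ the map $f^{m_0}\colon P_n\to P_{n-m_0}$ is an APL map, so that $\mathcal{K}(f)$ is the filled Julia set of a polynomial-like restriction of $f^{m_0}$; and since $\mathcal{S}(f)=\{c'_f\}$ is a single point, $f$ is non-renormalizable at $c'_f$. In particular, if $z\in E(f)$ then the orbit of $z$ enters $P_n\cup Q_n$ for every $n$.

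Next I would dispose of $E_{\mathrm{c}}:=\{z\in E(f):c'_f\in\omega(z)\}$. Using that $f$ is non-renormalizable at $c'_f$ one obtains, as in \cite{KL} and \cite[Theorem~1.1]{KS}, a nest of puzzle pieces around $c'_f$ whose moduli are bounded below and whose first-return maps are non-degenerate generalized polynomial-like maps; Theorem~\ref{thm:Kqcpuzzle'} furnishes the uniform quasiconformal-distortion control that this requires. One then runs the by-now standard measure estimate (cf.\ \cite[Fact~5.1]{KSS} and \cite[Theorem~1.1]{KS}): each step down the nest loses a definite proportion of Lebesgue measure, whence the set of points whose orbit returns infinitely often to arbitrarily deep puzzle pieces about $c'_f$ --- which contains $E_{\mathrm{c}}$ --- is null.

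It remains to treat $E_{\mathcal{K}}:=\{z\in E(f):\omega(z)\cap\mathcal{K}(f)\ne\emptyset\}$. I would split $E_{\mathcal{K}}=G\cup(E_{\mathcal{K}}\setminus G)$, where $G=\bigcup_{j\ge0}f^{-j}(\mathcal{K}(f))\setminus\mathcal{K}(f)$ is the grand orbit of $\mathcal{K}(f)$ with $\mathcal{K}(f)$ removed; since preimages of a null set under a polynomial are null, $G$ is null whenever $\mathcal{K}(f)$ is, which is the situation of interest in the rigidity argument. For a point of $E_{\mathcal{K}}\setminus G$ the forward orbit accumulates on $\mathcal{K}(f)$ but never lands in it, so --- because $f^{m_0}\colon P_n\to P_{n-m_0}$ is APL with filled Julia set $\mathcal{K}(f)$ --- it must cross each annulus $P_n\setminus\overline{P_{n+m_0}}$ infinitely often and keep re-entering $P_{\kappa_0}$ through first-landing maps of uniformly bounded distortion (Theorem~\ref{thm:Kqcpuzzle'}); the same measure-loss mechanism as above then shows $E_{\mathcal{K}}\setminus G$ is null, so that $E(f)$ is null.

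I expect the $\mathcal{K}(f)$-part to be the main obstacle: there $f$ is renormalizable at $0$, so one cannot simply quote shrinking of the critical puzzle pieces, and one must instead show that every excursion of an orbit into a neighbourhood of $\mathcal{K}(f)$ followed by an escape costs a definite fraction of Lebesgue measure --- combining the polynomial-like structure of $f^{m_0}$ near $\mathcal{K}(f)$ with the quasiconformal-distortion bounds of \S\ref{sec:Kahn} --- and one must carefully isolate and dispose of the grand orbit $G$. The $c'_f$-part, by contrast, is the familiar Kozlovski--van Strien type measure estimate for non-renormalizable polynomials, which \S\ref{sec:Kahn} was set up to supply.
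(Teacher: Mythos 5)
Your decomposition and toolkit are essentially the paper's: the paper also splits according to whether $\omega(z)$ meets $\mathcal S(f)$ or $\mathcal K(f)$ (organized at the top level by whether $c_0$ is combinatorially recurrent), kills the $\mathcal S(f)$-part in the recurrent case via the non-renormalizable first-return (box) mapping around $c'_f$ and \cite[Theorem~1.5]{KS}, and kills the remaining parts by a Lebesgue density-point argument: along the orbit one finds round disks of definite radius centered in a compact annulus $H_{n_1}(Y^{(n_1)})\setminus\overline{H_{n_1+m_0}(Y^{(n_1+m_0)})}$ (resp.\ in a deep piece around $c'_f$), pulls them back to $z$ with bounded distortion, and concludes $z$ is not a density point. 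Two places in your outline need repair. First, your disposal of the grand-orbit piece $G$ does not go through: $\mathcal K(f)$ is the small filled Julia set (e.g.\ $\mathcal K(f_0)=\overline{U_0}$), which has positive Lebesgue measure whenever $\chi(f)$ has interior Fatou components --- in particular always when $\chi(f)$ is hyperbolic --- so $G$ is not null and is not negligible ``in the situation of interest.'' You have in fact put your finger on an imprecision in the statement itself: as written, $E(f)$ contains $f(\mathcal K(f)),\dots,f^{m_0-1}(\mathcal K(f))$ and their preimages, which can have positive measure. What the paper's proof actually establishes (its density-point argument presupposes that the orbit approaches $\mathcal K(f)$ only through the annuli, never entering it), and what the injectivity proof actually needs, is the nullity of $E(f)\setminus G$; on $G$ the limit map is conformal for a different reason (conformality on $\mathcal K(f)$ from the hybrid equivalence, pulled back). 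So restrict the claim to $E(f)\setminus G$ rather than trying to prove $G$ null.

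Second, the assertion that points of $E_{\mathcal K}$ re-enter $P_{\kappa_0}$ ``through first-landing maps of uniformly bounded distortion'' is unjustified: a first-landing domain of $Y^{(n)}$ may pass through the free critical piece $X^{(\ell)}$ arbitrarily many times before landing, so these maps have unbounded degree in general. The distortion control is available only after $E_{\mathrm c}$ has been discarded: for $z\in E_{\mathcal K}\setminus E_{\mathrm c}$ the orbit eventually avoids a fixed $H_{\ell}(X^{(\ell)})$, the pullbacks of the disks $\mathbb D(z_{k_n},\delta)\subset H_{\ell}(Y^{(\ell)})$ are then univalent (or of degree at most $d_2+1$ in the non-recurrent case), and the Koebe distortion theorem --- not Theorem~\ref{thm:Kqcpuzzle'}, which is a quasiconformal-distortion bound for landing domains used in the surgery and plays no role here --- gives the density estimate. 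Relatedly, the principal-nest/complex-bounds machinery you invoke for $E_{\mathrm c}$ requires $c'_f$ to be combinatorially recurrent; when it is not, the paper replaces it by the same direct bounded-degree pullback argument. With these adjustments your plan coincides with the paper's proof.
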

\begin{proof} 

{\bf Case 1.} $c_0$ is combinatorially recurrent. 

We first show that for $E'(f)=\{z\in \C\mid \omega(z) \cap \mathcal S(f) \ne \emptyset\}$ has zero Lebesgue measure. To this end, we construct  a generalized polynomial like map (complex box map) as in the proof of Lemma~\ref{lem:X shrinking}. Let $P_0=X^{(\kappa_0)}$. For $n\ge 1$, we define $P_n$ as the component of the first return map to $P_{n-1}$ which contains $c_0$ inductively.  It is well known this is a moment $n_0$ so that $P_{n_0+1} \Subset P_{n_0}$ since $f_0$ is not renormalizable at $c_0$ (see~\cite[Lemma~2.2]{KS} for an example). Let $H_{n_0+1}$ be a weak pseudo-conjugacy between $f_0$ and $f$ up to depth $n_0+1$. Let $V_f:=H_{n_0+1}(P_{n_0+1})$  and let $U_f$ be the domain of the first return to $V_f$ under $f$. The first return map $R_f:U_f\to V_f$ is our desired generalized polynomial-like map. Similarly, let $V_0:=P_{n_0+1}$ and $R_0:U_0\to V_0$ be the first return map under $f_0$. Since $f_0$ is non-renormalizable at $c_0$, so is $R_0:U_0\to V_0$. As $f\in \mathcal C(\lambda_{f_0})$, $R_f:U_f\to V_f$ is also non-renormalizable. By~\cite[Theorem~1.5]{KS} (see also \cite{L3}), $K(R_f)$ has Lebesgue measure $0$ where $K(R_f):=\{z\in U_f \mid R^n_f(z) \in U_f ~\text{for all}~n\in \mathbb N\}$. Note that $E'(f)\subset K(R_f)$, so $E'(f)$ has zero Lebesgue measure.

Now we show $E(f)\setminus E'(f)$ has zero Lebesgue measure. It suffices to show that for any $z\in E(f)\setminus E'(f)$, $z$ is not a density point of 
 $E(f)\setminus E'(f)$.  Let $z_k:=f^k(z)$ for all $k\in \mathbb N$. Since $z\notin \mathcal S(f)$, there exists $\ell>0$ such that $\mathrm{orb}(z) \cap H_{\ell}(X^{(\ell)}))=\emptyset$. Now consider the APL map $f^{m_0}:H_{\ell+m_0}(Y^{(\ell+m_0)}) \to H_{\ell}(Y^{(\ell)})$. 
Choose $n_1$ large so that  $H_{n_1}(Y^{(n_1)}) \Subset H_{\ell}(Y^{(\ell)})$.  Let $\delta$ denote the  Hausdorff distance between $\overline H_{n_1}(Y^{(n_1)})\setminus H_{n_1+m_0}(Y^{(n_1+m_0)})$ and $\partial H_{\ell}(Y^{(\ell)}) \cup \mathcal K(f)$.  As they are disjoint closed set, $\delta>0$.
Since $\omega(z) \cap \mathcal K(f) \ne \emptyset$, there is a strictly increasing  sequence $\{k_n\}_{n\in \mathbb N}$ such that $z_{k_n} \in  H_{n_1}(Y^{(n_1)})\setminus \overline{H_{n_1+m_0}(Y^{(n_1+m_0)})}$. Let $D_n$ be the component of $f^{-k_n}(\mathbb D(z_{k_n},\delta))$ which contains $z$ for all $n\in \mathbb N$. Note that $f^{k_n}:D_n \to \mathbb D(z_{k_n},\delta)$ is conformal. Indeed, any pullback of $\mathbb D(z_{k_n},\delta)$ cannot intersect $H_{\ell}(X^{(\ell)}))$  since  $\mathrm{orb}(z) \cap H_{\ell}(X^{(\ell)}))=\emptyset$ and $\mathbb D(z_{k_n},\delta) \subset H_{\ell}(Y^{(\ell)})$. By the Koebe Distortion Theorem,  $z$ is not a density point of $K(f)$.

{\bf Case 2.} $c_0$ is combinatorially non-recurrent. Fix $\ell>\kappa_0$ large so that $f^j_0(c_0) \notin X^{(\ell)}$ for all $j\ge 1$. 

In this case, we first show $E''(f)=\{z\in E(f)\mid \omega(z) \cap \mathcal K(f) \ne \emptyset\}$ has zero Lebesgue measure. The proof is similar to the second part of Case~1. Take $z\in E''(f)$ and let $z_k:=f^k(z)$ for all $k\in \mathbb N$. Choose $n_1$ large so that  $H_{n_1}(Y^{(n_1)}) \Subset H_{\ell}(Y^{(\ell)})$.  Let $\delta$ denote the  Hausdorff distance between $\overline H_{n_1}(Y^{(n_1)})\setminus H_{n_1+m_0}(Y^{(n_1+m_0)})$ and $\partial H_{\ell}(Y^{(\ell)}) \cup \mathcal K(f)$.  As they are disjoint closed set, $\delta>0$. Since $\omega(z) \cap \mathcal K(f) \ne \emptyset$, there is a strictly increasing  sequence $\{k_n\}_{n\in \mathbb N}$ such that $z_{k_n} \in  H_{n_1}(Y^{(n_1)})\setminus \overline{H_{n_1+m_0}(Y^{(n_1+m_0)})}$. Let $D_n$ and $D'_n$ be the component of $f^{-k_n}(\mathbb D(z_{k_n},\delta))$ and $f^{-k_n}(\mathbb D(z_{k_n},\delta/2))$ which contains $z$ for all $n\in \mathbb N$ respectively. Note that $f^{k_n}:D_n \to \mathbb D(z_{k_n},\delta)$ is a holomorphic proper map of degree at most $d_2+1$ (since $c_0$ is combinatorially non-recurrent). Thus $f^{k_n}:D'_n \to \mathbb D(z_{k_n},\delta/2)$ can be written as $\psi_n\circ Q_0\circ \phi_n$ where $Q_0(z)=z^{d_2+1}$, $\psi_n$ and $\phi_n$ are conformal maps with uniformly bounded distortion. Then it follows $z$ is not a density point of $E''(f)$ by a standard argument.

It remains to show that $E(f)\setminus E''(f)$ has zero Lebesgue measure. Take $z\in E(f)\setminus E''(f)$ and let $z_k:=f^k(z)$ for all $k\in \mathbb N$. Fix $n_2$ large so that $H_{n_2}(X^{(n_2)}) \Subset H_{\ell}(X^{(\ell)})$. Let $\delta_1$ denote the Hausdorff distance between $\omega(z)$ and $\mathcal K(f)$ and let $\delta_2$ denote the Hausdorff distance between $\partial H_{n_2}(X^{(n_2)})$ and $\partial H_{\ell}(X^{(\ell)})$. Take $\delta<\min (\delta_1,\delta)$. Since $\omega(z) \cap \mathcal S(f)\ne \emptyset$, there exists a strictly increasing sequence $\{k_n\}$ such that $z_{k_n}\in H_{n_2}(X^{(n_2)})$. Let $U_n$ be the component of $f^{-k_n}(\mathbb D(z_{k_n},\delta))$ which contains $z$ for all $n\in \mathbb N$. Clearly, $f^{k_n}:U_n\to \mathbb D(z_{k_n},\delta)$ is conformal by the construction. It follows from Koebe Distortion Theorem that $z$ is not a density point of $E(f)\setminus E''(f)$.
\end{proof}

We are now in a position to show Theorem~\ref{thm:injection}.
\begin{proof}[Proof of Theorem~\ref{thm:injection}]
Assume there are $f\in \mathcal C(\lambda_{f_0})$ and $\widetilde f \in \mathcal C(\lambda_{f_0})$  such that $\chi(f)=\chi(\widetilde f)$. We prove that $f=\widetilde f$. 

For any $n\in \mathbb N$, let  $H_n$ be a qc weak pesudo-conjugacy between $f_0$ and $f$ and   let  $\widetilde H_n$ be a qc weak pesudo-conjugacy between $f_0$ and $\widetilde f$. Let $\Lambda_n=\widetilde H_n\circ H^{-1}_n$ for all $n\in\mathbb N$. It follow there exists a sequence $\{k_n\}$ and $K>0$ with the following properties:
\begin{itemize}
\item $\Lambda_{k_n}$ has a $K$-qc extension from $H_{k_n}(Y^{(k_n)} \cup X^{(k_n)})$ onto $\widetilde H_{k_n}(Y^{(k_n)} \cup X^{(k_n)})$;
\item $\bar\partial \Lambda_{k_n}=0$ a.e. on $\mathcal K(f)$
\end{itemize}
by Lemma~\ref{lem:Fatou}, Lemma~\ref{lem:Julia} and Remark~\ref{re:filled}. Then by Lemma~\ref{lem:pseudo}, we can further assume $\Lambda_{k_n}$ is global $K$-qc and $\Lambda_{k_n}\circ f=\widetilde f\circ  \Lambda_{k_n}$ on $\C\setminus H(Y^{(n)}\cup \partial X^{(n)})$. By the compactness of normalized $K$-qc maps, $\{\Lambda_{k_n}\}$ has a convergent subsequence. Without loss of generality, we assume $\Lambda_{k_n} $ converges to a $K$-qc map $\Phi:\C\to \C$. Clearly $\Phi\circ f=\widetilde f\circ \Phi$ on $\C\setminus (\mathcal K(f)\cup \mathcal S(f))$.
By the construction in the proof of Lemma~\ref{lem:pseudo}, we know that $\bar\partial \Lambda_{k_n}$ only supports on $\bigcup_{j=0}^{\infty}f^{-j}(H_{k_n}(Y^{(k_n)}\cup X^{(k_n)}\setminus \mathcal K(f)))$. Since these sets shrink to $E(f)$ which has zero Lebesgue measure by Lemma~\ref{lem:measure}, we conclude $\Phi$ is conformal. By the proof of Lemma~\ref{lem:Fatou} and Remark~\ref{re:filled}, we know $\Phi \circ f^{m_0}=\widetilde f^{m_0}\circ \Phi$ on $\mathcal K(f)$. Since $\Phi\circ f=\widetilde f\circ \Phi$ on $\bigcup_{j=1}^{m_0-1} f^j(\mathcal K(f))$, we have 
\[\widetilde f^{m_0-1}\circ \widetilde f_0\circ \Phi= \Phi\circ f^{m_0}=\Phi\circ f^{m_0-1}\circ f=\widetilde f^{m_0-1}\circ \Phi\circ f\]
on $\mathcal K(f)$. As $\widetilde f^{m_0-1}|_{\widetilde f(\mathcal K(\widetilde f))}$ is one-to-one, we conclue $\Phi\circ f=\widetilde f\circ \Phi$ on $\mathcal K(f)$. Thus,  $\Phi\circ f=\widetilde f\circ \Phi$ on the whole complex plane $\C$. Since $\Phi$ is conformal and is tangent to $id$ at $\infty$, $\Phi$ is identity.
\end{proof}

Theorem~A follows immediately from Theorem~\ref{thm:surjection} and Theorem~\ref{thm:injection}.  Theorem~C is a consequence of Theorem~A, \cite[Propsotion~4.7]{McM2} and the compactness of $\mathcal C(\lambda_{f_0})$ (\cite[Theorem~8.1]{IK}).

\bibliographystyle{plain}             

\end{document}